\numberwithin{equation}{section}
\newtheorem{thm}{Theorem}
\newtheorem{prop}[thm]{Proposition}
\newtheorem{lemma}[thm]{Lemma}
\newtheorem{cor}[thm]{Corollary}
\theoremstyle{definition}
\newtheorem{example}[thm]{Example}
\newtheorem{remark1}[thm]{Remark}
\newtheorem{openproblem1}[thm]{Open problem}
\newenvironment{ex}{\begin{example}\rm}{\end{example}}
\numberwithin{thm}{section}
\newcounter{FNC}[page]
\def\newfootnote#1{{\addtocounter{FNC}{2}$^\fnsymbol{FNC}$%
     \let\thefootnote\relax\footnotetext{$^\fnsymbol{FNC}$#1}}}
\newcommand{\N}{\mathbb{N}}
\newcommand{\Q}{\mathbb{Q}}
\newcommand{\R}{\mathbb{R}}
\newcommand{\B}{\mathbb{B}}
\newcommand{\Z}{\mathbb{Z}}
\newcommand{\T}{\mathbb{T}}
\newcommand{\VV}{\mathcal{V}}
\newcommand{\HH}{\mathcal{H}}
\newcommand{\Acal}{\mathcal{A}}
\newcommand{\Bcal}{\mathcal{B}}
\newcommand{\sym}{\mathcal{S}}
\newcommand{\psd}{\mathcal{S}_{+}}
\newcommand{\pd}{\mathcal{S}_{++}}
\newcommand{\pip}{\pi(P)}
\newcommand{\pih}{\pi\HH}
\newcommand{\pipa}{\pi(P_A)}
\newcommand{\pipb}{\pi(P_B)}
\newcommand{\pis}{\pi\sym}
\newcommand{\pisa}{\pi(S_A)}
\newcommand{\pisb}{\pi(S_B)}
\newcommand{\st}{\mathrm{s.t.}}
\newcommand{\widephi}{\widehat{\Phi}}
\newcommand{\widea}{\widehat{A}}
\newcommand{\widema}{\widehat{\mathcal{A}}}
\newcommand{\bs}{\backslash}
\newcommand{\qm}{\mathcal{M}}
\DeclareMathOperator{\SOS}{\Sigma}
\DeclareMathOperator{\linspan}{span}
\DeclareMathOperator{\diag}{diag}
\DeclareMathOperator{\tr}{tr}
\DeclareMathOperator{\cl}{cl}
\DeclareMathOperator{\kernel}{ker}
\title{Containment Problems for Projections of Polyhedra and Spectrahedra}
\author{Kai Kellner}
\address{Goethe-Universit\"at, FB 12 -- Institut f\"ur Mathematik,
Postfach 11 19 32, D--60054 Frankfurt am Main, Germany}
\email{kellner@math.uni-frankfurt.de}
\begin{document}

\begin{abstract}
Spectrahedra are affine sections of the cone of positive semidefinite matrices
which form a rich class of convex bodies that properly contains that of
polyhedra.
While the class of polyhedra is closed under linear projections, the class of
spectrahedra is not.
In this paper we investigate the problem of deciding containment of
projections of polyhedra and spectrahedra based on previous works on
containment of spectrahedra.
The main concern is to study these containment problems by formulating them as
polynomial nonnegativity problems.
This allows to state hierarchies of (sufficient) semidefinite conditions by
applying (and proving) sophisticated Positivstellens\"atze.
We also extend results on a solitary sufficient condition for containment of
spectrahedra coming from the polyhedral situation as well as connections to
the theory of (completely) positive linear maps.
\end{abstract}

\maketitle

\section{Introduction}

A containment problem is the task to decide the set-theoretic inclusion of two
given sets.
In a broader sense this includes, e.g., radii~\cite{gritzmann1992} or packing
problems~\cite{banai1999}.
For some classes of convex sets there has been strong interest in containment
problems.
This includes containment problems of polyhedra and balls~\cite{freund1985}
and containment of polyhedra~\cite{gritzmann1994}.
In recent years, containment problems for spectrahedra, which naturally
generalize the class of polyhedra, have seen great interest.
  Ben-Tal and Nemirovski started that investigation by developing
approximations of uncertain linear matrix inequalities yielding a quantitative
semidefinite criterion for the so-called \emph{matrix cube problem}, the
decision problem whether a cube is contained in a
spectrahedron~\cite{bental2002}.
  Helton, Klep, and McCullough studied the geometry of so-called \emph{free
spectrahedra} (also known as matricial relaxation of spectrahedra) including
containment problems~\cite{Helton2012,Helton2010}.
They established connections to operator theory, namely equivalence between
containment of free spectrahedra and positivity of a certain linear map.
  From that they gained a sufficient semidefinite criterion for containment of
spectrahedra which coincides with the Ben-Tal-Nemirovski criterion when  
applied to the matrix cube problem.
Recently Helton, Klep, McCullough, and Schweighofer showed that the
quantitative version of Ben-Tal-Nemirovski's criterion is the best
possible~\cite{helton2014}.

The sufficient semidefinite criterion has been reproved by Theobald, Trabandt,
and the author by a geometric approach.
They also provided exact semidefinite characterizations for containment in
several important cases~\cite{ktt1}.
In a second work the authors formulated the containment problem for
spectrahedra as a polynomial nonnegativity question~\cite{ktt2}.
Based on this formulation they studied a hierarchy of semidefinite programs
each serving as a sufficient condition for containment coming from Lasserre's
moment approach~\cite{Henrion2006} and Putinar's
Positivstellensatz~\cite{putinar1993}.
It turned out that the first step of the hierarchy is implied by the solitary
criterion coming from positive linear maps and the geometric approach,
yielding finite convergence statements in several cases.
In~\cite{kphd} the author considered a different but related hierarchy of
semidefinite programs based on the Positivstellensatz by Hol and
Scherer~\cite{Hol2006}.
As this approach relies on the geometry of the spectrahedra and the defining
linear matrix pencils it allows stronger results.
Specifically, all finite convergence results from~\cite{ktt2} can be brought
forward to this hierarchy and its first step coincides with the solitary
criterion.
In addition, using the connection to the theory of positive linear maps,
finite convergence is shown for a special family of 2-dimensional bounded
spectrahedra.

This paper is concerned with containment problems for projections of polyhedra
and spectrahedra.
Besides the natural question of extending the results for containment of
polyhedra and spectrahedra to their projections, the paper is motivated by the
growing attention the geometry of projections of polyhedra and spectrahedra
attracted in recent years.
Among others they have become relevant in many areas like polynomial
optimization~\cite{blekherman2012,gouveia2010,lasserre2010}, (real) convex
algebraic geometry~\cite{Blekherman2013,helton2007}, and extended formulations
of polytopes~\cite{fiorini2012}. 

Starting point of our considerations are the methods and results discussed
above.
More precisely, we treat possible extensions of the geometric approach,
positive linear maps and polynomial optimization to the case of projections.
The main considerations and contributions are the following.

\begin{enumerate}
  \item
  Although the class of polyhedra is closed under (linear) projections,
containment problems become more subtle.
This is reflected in the fact that the containment problem of two projected
polyhedra is co-NP-complete (Theorem~\ref{thm:complexity_h_pih}).
We formulate the problem as a bilinear nonnegativity question
(Theorem~\ref{thm:pihpih}) and study its geometry.
  \item
  As the class of spectrahedra is not closed under projections, containment
problems are even more subtle.
However under additional assumptions (which are common in semidefinite
programming) a similar formulation as in the polyhedral case is possible
(Theorem~\ref{thm:pispis}).
\end{enumerate}
  Retreating to the case where only one set is given as a projection, allows to
bring forward several results from the non-projected case.
\begin{enumerate}[resume]
  \item
  Based on the polyhedral case we deduce a sufficient semidefinite criterion
for containment of a projected spectrahedron in a spectrahedron
(Theorem~\ref{thm:inclusion-proj}).
  \item
  We establish a refinement of Hol-Scherer's Positivstellensatz based on the
geometry of the projected sets (Theorem~\ref{thm:pispossatz}).
That allows to state a hierarchy of sufficient semidefinite conditions to
decide containment.
The first step of the refined hierarchy coincides with the solitary criterion.
As a corollary we gain a Positivstellensatz for polynomials on projections of
polytopes (Proposition~\ref{prop:pihpossatz}).
  \item
  The connection between containment and the concept of positive linear maps
can be extended to this case (Theorem~\ref{thm:pispos}).
\end{enumerate}

\smallskip

The paper is structured as follows.
After introducing some relevant notation, we state some basics on projections
of polyhedra and spectrahedra; see Section~\ref{sec:prelim}.
We formulate the containment problem for projections of polyhedra as a
polynomial nonnegativity question in Section~\ref{sec:pihpih} and extend it to
the case of projections of spectrahedra in Section~\ref{sec:pispis}.
In Section~\ref{sec:pis-s} we retreat to the containment problem of a
projected spectrahedron in a spectrahedron.

\section{Preliminaries} \label{sec:prelim}

Let $\sym^k$ be the space of symmetric $k\times k$ matrices with real entries
and $\psd^k$ be the closed, convex cone of positive semidefinite $k\times k$
matrices.
For $x = (x_1,\ldots,x_d)$ denote by $\sym^k [x]$ the space of symmetric
$k\times k$ matrices with entries in the polynomial ring $\R[x]$.
For real symmetric matrices $A_0,A_1,\ldots,A_d\in\sym^k$ a linear matrix
polynomial $A(x) = A_0 + \sum_{p=1}^d x_p A_p \in\sym^k[x]$ is called a
\emph{linear (matrix) pencil}.
The positivity domain of $A(x)$ is defined as the set of points in $\R^d$ for
which $A(x)$ is positive semidefinite,
\begin{equation*}
  S_A = \left\{ x\in\R^d\ |\ A(x) \succeq 0 \right\} ,
\end{equation*}
where $A(x) \succeq 0$ denotes positive semidefiniteness.
The closed, convex, and basic closed semialgebraic set $S_A$ is called a
\emph{spectrahedron}.

Every \emph{$\HH$-polyhedron} $P_A = \{x\in\R^d\ |\ a+Ax\ge 0 \}$ has a
natural representation as a spectrahedron called the \emph{normal form} of the
polyhedron $P_A$ as a spectrahedron,
\begin{equation}\label{eq:normalform}
  P_A = \left\{ x\in\R^d\ |\ A(x) = \bigoplus_{i=1}^k a_i(x) =
  \begin{bmatrix} a_1(x) & & 0 \\ & \ddots & \\ 0 & & a_k(x) \end{bmatrix}
  \succeq 0 \right\},
\end{equation}
where $a_i(x) = (a+Ax)_i$ for $i\in [k]$. 
However the converse is not true, i.e., there exist nondiagonal pencils
describing polyhedra. Deciding whether a given spectrahedron is a polyhedron,
the so-called \emph{Polyhedrality Recognition Problem} (PRP), is
NP-hard~\cite{ramana1997c} and can be reduced to an $\HH$-in-$\sym$
containment problem~\cite{Bhardwaj2011}.
Note that the normal form used here does not coincide with the normal form
used in~\cite{ktt1,ktt2} as we do not require the constant term $a$ to be the
all-ones vector.

Following the common notation for bounded polyhedra (\emph{polytopes}), we
call a bounded spectrahedron a \emph{spectratope}.


Denote by $\pi:\ \R^{d+m}\to\R^d$ $(x,y) \mapsto x$ the linear coordinate
projection map.
By Fourier-Motzkin elimination, given an $\HH$-polyhedron 
$P = \{(x,y)\in\R^{d+m}\ |\ a+Ax+A'y\geq 0\}$, the projection of $P$ onto the
$x$ coordinates is again an $\HH$-polyhedron. 
Unfortunately, a quantifier-free $\HH$-description of $\pi(P)$ can be
exponential in the input size $(d,m,k)$, where $k$ is the number of rows in
$A$ and $A'$; see~\cite[Sections 1.2 and 1.3]{Ziegler1995} and the references
therein.

Given a linear pencil $A(x,y)\in\sym^k[x,y]$ with $x = (x_1,\ldots,x_d)$ and 
$y= (y_1,\ldots,y_m)$ for some nonnegative integer $m$, a projection of the
spectrahedron $S_A$ is its image under an affine map. By an elementary
observation, without loss of generality, we can assume that the affine
projection is a coordinate projection.

\begin{prop}[{\cite[Section 2]{Gouveia2011}}] \label{prop:coordinate}
If a set $T\subseteq\R^d$ is the image of a spectrahedron $S$ under an affine 
map, then there exists a linear pencil $A(x,y)\in\sym^k[x,y]$ with 
$x = (x_1,\ldots,x_d)$ and $y = (y_1,\ldots,y_m)$ for some nonnegative integer
$m$ such that $T$ is a coordinate projection of $ S_A \subseteq \R^{d+m} $.
Furthermore, if $T$ and $S$ have nonempty interior, then this can be assumed
for $S_A$ too.
\end{prop}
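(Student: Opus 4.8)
The plan is to realize $T$ explicitly as a coordinate projection by encoding the affine map into a linear pencil. Write $S = S_B = \{z \in \R^n : B(z) \succeq 0\}$ for a linear pencil $B(z) \in \sym^\ell[z]$, and let the affine map be $\phi(z) = c + Cz$ with $c \in \R^d$ and $C \in \R^{d \times n}$, so that $T = \phi(S)$. The idea is to take the graph of $\phi$ over $S$ as the lifted spectrahedron. Using the $z$-variables as the auxiliary coordinates $y$ (so $m = n$), I would set
\begin{equation*}
  A(x,y) = B(y) \oplus \bigoplus_{j=1}^d \diag\bigl( \ell_j(x,y),\, -\ell_j(x,y) \bigr), \qquad \ell_j(x,y) = x_j - c_j - (Cy)_j ,
\end{equation*}
which is a linear pencil in $\sym^{\ell + 2d}[x,y]$. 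Each $2 \times 2$ diagonal block $\diag(\ell_j, -\ell_j)$ is positive semidefinite exactly when $\ell_j(x,y) = 0$, so these blocks jointly enforce the equality $x = c + Cy$.

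With this pencil, $A(x,y) \succeq 0$ holds if and only if $B(y) \succeq 0$ and $x = c + Cy$; hence $S_A = \{(c + Cy, y) : y \in S\}$ is precisely the graph of $\phi|_S$. Projecting onto the $x$-coordinates gives $\pi(S_A) = \{c + Cy : y \in S\} = \phi(S) = T$, which establishes the first assertion for every affine map, with no assumption on ranks or interiors.

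The refinement is the more delicate point, and the graph construction above is useless for it: the set $S_A$ lies in the affine subspace $\{x = c + Cy\}$ and therefore has empty interior in $\R^{d+m}$. The hard part will be producing a full-dimensional lift, and for this I would exploit that a nonempty interior of $T$ forces $\phi$ to be surjective. Indeed $T \subseteq c + \image(C)$, so if $\image(C) \neq \R^d$ then $T$ lies in a proper affine subspace and has empty interior; thus $C$ has full row rank $d$. I would then choose $D \in \R^{(n-d)\times n}$ completing the rows of $C$ to a basis of $\R^n$, so that $\psi(z) = (c + Cz,\, Dz)$ is an affine isomorphism of $\R^n$ onto $\R^d \times \R^{n-d}$ satisfying $\phi = \pi \circ \psi$. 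Substituting $z = \psi^{-1}(x,y)$ into $B$ yields a linear pencil whose positivity domain is $\psi(S)$, a spectrahedron in $\R^{d+m}$ with $m = n - d$; since affine isomorphisms preserve interiors, $\psi(S)$ has nonempty interior whenever $S$ does, and $\pi(\psi(S)) = \phi(S) = T$ by construction. This simultaneously yields the coordinate projection and the nonempty-interior claim, the only subtlety being the passage from the degenerate graph to the nondegenerate change of coordinates, which is exactly what surjectivity of $\phi$ makes available.
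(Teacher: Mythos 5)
Your proof is correct, and it follows essentially the same route as the source the paper cites for this statement (the paper itself imports Proposition~\ref{prop:coordinate} from~\cite[Section 2]{Gouveia2011} without reproducing an argument): encode the graph of the affine map via the sign-pair blocks $\diag(\ell_j,-\ell_j)$ to realize $T$ as a coordinate projection, and, under the interior hypothesis, use that $\inter T\neq\emptyset$ forces $C$ to have full row rank so that $\phi$ factors as a coordinate projection composed with an affine isomorphism $\psi$, giving the full-dimensional lift $\psi(S)$. Both steps, including your correct observation that the graph construction cannot yield the interior statement and must be replaced by the change of coordinates, match the standard argument, so there is nothing to fix.
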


Due to the proposition, we always assume that the \emph{projection of
spectrahedron} $S_A$ as given by the linear pencil $A(x,y)\in\sym^k[x,y]$ with 
$ x = (x_1,\ldots,x_d)$ and $ y = (y_1,\ldots,y_m)$, $m\geq 0$, is the set 
\begin{equation}\label{eq:proj_spec}
  \pisa = \left\{ x\in\R^d\ |\ \exists y\in\R^m:\ A(x,y)\succeq 0 \right\} ,
\end{equation}
where $\pi:\ \R^{d+m} \rightarrow \R^m$ denotes the coordinate projection.

While projections of polyhedra are again polyhedral, this is not true for
spectrahedra (see, e.g., \cite[Section 6.3.1]{Blekherman2013}).
Moreover, whereas spectrahedra are basic closed semialgebraic sets (the
semialgebraic constraints are given by the nonnegativity condition on the
principal minors), projected spectrahedra are generally not. 
Though they are semialgebraic, they are not (basic) closed in general; see
Example~\ref{ex:pispispos}.

We state an easy observation for completeness.

\begin{lemma} \label{lem:spec_bounded}
Let $ A(x,y) \in\sym^k[x,y] $ be a linear pencil. 
\begin{enumerate}
  \item
  $ S_A \neq \emptyset \iff \pisa\neq\emptyset $.
  \item 
  If $ S_A $ is bounded, then $ \pisa $ is bounded. 
\end{enumerate}
\end{lemma}

The converse of part (2) in the previous lemma is not true in general.

Throughout the paper we use the following notation.
The class of projections of $\HH$-polyhedra (resp. spectrahedra) is denoted by
$\pih$ (resp. $\pis$).
For integers $m,n\in\Z$ with $m\leq n$ we write $[m,n] = \{m,m+1,\ldots,n\}$
and $[m]=[1,\ldots,m]$.

\subsection{Complexity of Containment Problems}\label{sec:complexity}

The computational complexity of containment problems concerning polyhedra is
well-known~\cite{freund1985,gritzmann1993,gritzmann1994,gritzmann1995}.
Recently this has been extended to spectrahedra~\cite{bental2002,ktt1}.
We shortly classify the complexity of several containment problems for
projections of $\HH$-polytopes and spectrahedra. For more details on the
complexity classification see~\cite{kphd}.

Our model of computation is the binary Turing machine: 
projections of polytopes are presented by certain rational numbers, and the
size of the input is defined as the length of the binary encoding of the input
data (see, e.g.,~\cite{gritzmann1992}). 
Consider the linear projection map $\pi:\ \R^{d+m}\to\R^d,\ (x,y)\mapsto x$.
An $\pih$-polytope $\pi(P)$ is given by a tuple $(d;m;k;A;A';a)$ with
$d,m,k\in\N$, matrices $A\in\Q^{k\times d}$ and $A'\in\Q^{k\times m}$, and
$a\in\Q^{k}$ such that 
$\pi(P) = \{x\in\R^d\ |\ \exists y\in\R^m:\ a+Ax+A'y \ge 0\}$ is bounded. 
For algorithmic questions, a linear pencil is given by a tuple
$(d;m;k;A_0,\ldots,A_d,A'_{1},\ldots,A'_{m})$ with $d,m,k\in\N$ and
$A_0,\ldots,A_d,A'_{1},\ldots,A'_{m}\in\Q^{k\times k}$ rational symmetric 
matrices such that the projected spectrahedron is given by 
$\pi(S) = \{ x\in\R^d\ |\ \exists y\in\R^m:\ A(x,y) \succeq 0\}$.


Containment questions for spectrahedra are connected to feasibility questions
of semidefinite programs in a natural way.
A \emph{Semidefinite Feasibility Problem} (SDFP) is defined as the following
decision problem (see, e.g.,~\cite{khachiyan1997,Ramana1997b}). 
\begin{align} 
\begin{split}\label{eq:sdfp}
  &\text{Given } d,k\in\N \text{ and rational symmetric } 
  k\times k\text{-matrices } A_0,A_1,\ldots,A_d ,\\
  &\text{decide whether there exists } x\in\R^d 
  \text{ such that } A(x) \succeq 0 .
\end{split}
\end{align}
Equivalently, one can ask whether the spectrahedron $S_A$ is nonempty.
Although checking positive semidefiniteness can be done in polynomial time by
computing a Cholesky factorization, the complexity classification of the
problem SDFP is one of the major open complexity questions related to
semidefinite programming (see~\cite{deklerk2002, Ramana1997b}).
Using semidefinite programming techniques, a SDFP can be solved efficiently in
practice.
In our model of computation, the binary Turing machine, SDFP is known to be
feasible in polynomial time if the number of variables $d$ or the matrix size
$k$ is fixed~\cite[Theorem 7]{khachiyan1997}. 

We first discuss the complexity classification concerning only projections of
polytopes.

\begin{thm}\label{thm:complexity_pih_h}
Deciding whether a projected $\HH$-polytope is contained in an $\HH$-polytope
can be done in polynomial time.
\end{thm}

\begin{proof}
Let $\pip = \{x\in\R^d\ |\ \exists y\in\R^m : a+Ax+A'y \geq 0\}$ be a
projected $\HH$-polytope and let $Q = \{x\in\R^d\ |\ b+Bx \geq 0\}$ be an
$\HH$-polytope. Embed $Q$ into $\R^{d+m}$ by 
$Q' = \{(x,y) \in\R^{d+m}\ |\ b+Bx+0y \geq 0\}$. Then the containment 
problem $\pip\subseteq Q$ is equivalent to the $\HH$-in-$\HH$ containment 
problem $P \subseteq Q'$. The statement then follows
from~\cite{gritzmann1994}.
\end{proof}

In the latter theorem, the statement does not differ from the non-projected
case. The next theorem shows a significant change in the complexity
classification when the outer set is a projected $\HH$-polytopes.

\begin{thm} \label{thm:complexity_h_pih}
Deciding whether an (projected) $\HH$-polytope is contained in a projected
$\HH$-polytope is co-NP-complete. 
\end{thm}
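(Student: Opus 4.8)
The plan is to prove both directions: membership in co-$\NP$ and co-$\NP$-hardness.

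For membership, I show that the complementary problem---deciding \emph{non}-containment---lies in $\NP$. Let the inner set be a (possibly projected) $\HH$-polytope and let the outer set be $\pi(Q)$ with $Q = \{(x,y)\in\R^{d+m}\ |\ b+Bx+B'y\geq 0\}$. Non-containment means there is a point $x^*$ of the inner polytope with $x^*\notin\pi(Q)$, i.e. the system $b+Bx^*+B'y\geq 0$ is infeasible in $y$. By Farkas' lemma this infeasibility is certified by a vector $u\geq 0$ with $u^{\top}B' = 0$ and $u^{\top}(b+Bx^*)<0$. A certificate for non-containment therefore consists of the point $x^*$ (together with a $y$-witness confirming $x^*$ lies in the inner set, in case that set is itself a projection) and the Farkas vector $u$. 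Standard bounds on the bit-size of vertices of rational polyhedra guarantee that $x^*$, its witness, and $u$ (which may be taken as a vertex of the polyhedral certificate set) all admit a polynomial encoding, and the two linear (in)equalities are verified in polynomial time. Hence non-containment is in $\NP$ and the containment problem is in co-$\NP$.

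For hardness, I reduce an $\NP$-complete problem to the non-containment problem. The key structural fact is that, although $\pi(Q)$ is convex, it is the intersection of the halfspaces $\{x\ |\ u^{\top}(b+Bx)\geq 0\}$ taken over the (potentially exponentially many) extreme rays $u$ of the projection cone $C = \{u\geq 0\ |\ u^{\top}B' = 0\}$. Consequently, non-containment of the inner polytope $P$ in $\pi(Q)$ is equivalent to the existence of a pair $(x,u)$ with $x\in P$, $u\in C\setminus\{0\}$, and $u^{\top}(b+Bx)<0$, which is a bilinear feasibility problem. I would choose the pencil data $B,B',b$ so that the extreme rays of $C$ encode the candidate solutions of a fixed $\NP$-complete problem, and take $P$ to be a box, arranged so that a violating pair $(x,u)$ exists precisely when the combinatorial instance is a yes-instance. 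This makes non-containment $\NP$-hard, hence containment co-$\NP$-hard, and combined with the membership argument yields co-$\NP$-completeness.

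The main obstacle is the hardness direction: one must exhibit a polynomial-size pencil $(B,B',b)$ and a box $P$ whose associated bilinear feasibility faithfully encodes a known $\NP$-complete problem, and then verify both directions of the reduction. The delicate point is controlling the extreme rays of the projection cone $C$ so that they correspond exactly to the intended combinatorial objects, with no spurious rays producing unintended violations; once this encoding is fixed, the equivalence between non-containment and the bilinear system follows from the projection description above. The membership direction, by contrast, is routine given the Farkas certificate and the polynomial bit-size bounds.
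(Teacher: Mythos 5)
Your membership argument is sound and is essentially the paper's: the paper verifies a candidate violating point $p$ by checking the inner set's constraints directly and deciding $p\notin\pih$ via a polynomial-time linear feasibility problem (\cite[Theorem 13.4]{Schrijver1986}), whereas you replace the feasibility test by an explicit Farkas vector $u$; both work, provided one notes (as you do, via vertex bit-size bounds) that a violating point can be taken to be a vertex of the inner polytope, or the projection of a vertex when the inner set is itself projected --- this is legitimate since $\pi(Q)$ is convex, so containment of all vertices implies containment of the whole polytope.

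The hardness half, however, has a genuine gap: you never carry out the reduction. You correctly reformulate non-containment as a bilinear feasibility problem over $P\times\bigl(C\setminus\{0\}\bigr)$ (this is exactly Theorem~\ref{thm:pihpih}(1) of the paper), but the crucial step --- exhibiting polynomial-size data $(B,B',b)$ whose projection cone's extreme rays encode an $\NP$-complete problem, and verifying both directions --- is only announced as a plan, and you yourself flag it as the ``main obstacle.'' Constructing such gadgets and ruling out spurious extreme rays is precisely the hard content, so as written the co-$\NP$-hardness claim is unproved. The paper avoids all of this with a one-line reduction that your proposal misses: every $\VV$-polytope $\conv\{v_1,\ldots,v_n\}\subseteq\R^d$ is the coordinate projection of the $\HH$-polytope $\{(x,\lambda)\in\R^{d+n}\ |\ x=\textstyle\sum_{i=1}^n \lambda_i v_i,\ \lambda\ge 0,\ \mathds{1}_n^T\lambda=1\}$ (the equations written as pairs of inequalities), whose description is polynomial in the $\VV$-data. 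Hence the known co-$\NP$-complete problem $\HH$-in-$\VV$ \cite{freund1985,gritzmann1994} reduces directly to $\HH$-in-$\pih$, giving co-$\NP$-hardness, and the argument passes unchanged to $\pih$-in-$\pih$. Adopting this reduction (or any other concrete one) is necessary to close your proof; the bilinear reformulation alone does not yield hardness.
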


\begin{proof}
Consider a $\VV$-polytope. It has a representation as the projection of an
$\HH$-polytope polynomial in the input data. Thus the containment problem
$\HH$-in-$\pih$ is co-NP-hard since $\HH$-in-$\VV$ is co-NP-complete. It is
also in the class co-NP since given a certificate for '$\HH$ not in $\pih$',
i.e., a point $p$, one can test whether $p\in\HH$ and $p\not\in\pih$ by
evaluating the linear constraints of $\HH$ (all have to be satisfied) and by
solving a \emph{linear feasibility problem} which both is in P
by~\cite[Theorem 13.4]{Schrijver1986}.
Therefore $\HH$-in-$\pih$ is co-NP-complete. Obviously, the proof remains
valid when passing to $\pih$-in-$\pih$.
\end{proof}

In the remaining part, we study the complexity of containment problems
involving projections of spectrahedra.

As the complexity of SDFP is unknown, the subsequent statement on containment
of a spectrahedron in an $\HH$-polytope does not give a complete answer
concerning polynomial solvability of this containment question in the Turing
machine model. 

\begin{thm} \label{thm:complexity_s_h}
The problem of deciding whether the projection of a spectrahedron is contained
in an $\HH$-polytope can be formulated by the complement of semidefinite
feasibility problems (involving also strict inequalities), whose sizes are
polynomial in the description size of the input data.
\end{thm}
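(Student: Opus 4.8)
The plan is to reduce the containment question to the joint infeasibility of finitely many semidefinite systems, one for each inequality defining the outer polytope. Write the outer $\HH$-polytope as $Q = \{x\in\R^d \mid b + Bx \geq 0\}$ with $B\in\Q^{\ell\times d}$, $b\in\Q^{\ell}$, and denote by $B_i$ and $b_i$ the $i$-th row and entry. The inner set is the projected spectrahedron $\pisa = \{x\in\R^d \mid \exists y\in\R^m: A(x,y)\succeq 0\}$ for the given pencil $A(x,y)\in\sym^k[x,y]$.

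First I would negate the containment. By definition, $\pisa\not\subseteq Q$ exactly when some point of $\pisa$ violates one of the $\ell$ linear inequalities defining $Q$. Unfolding the existential quantifier hidden in $\pisa$, this reads
\begin{equation*}
  \pisa \not\subseteq Q \iff \bigvee_{i=1}^{\ell} \Bigl( \exists\,(x,y)\in\R^{d+m}:\ A(x,y)\succeq 0 \ \text{ and }\ b_i + B_i x < 0 \Bigr).
\end{equation*}
Each disjunct on the right is precisely a semidefinite feasibility problem in the unknown $(x,y)\in\R^{d+m}$, given by the matrix constraint $A(x,y)\succeq 0$ of size $k$ augmented by a single strict linear inequality. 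Hence non-containment is a disjunction of $\ell$ such SDFPs, and the containment $\pisa\subseteq Q$ is equivalent to all of them being infeasible, i.e., to the complement of this disjunction. This is exactly the asserted formulation by the complement of semidefinite feasibility problems involving strict inequalities.

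It then remains to verify the size bound. Each of the $\ell$ systems reuses the same pencil $A(x,y)$ together with one row $(b_i, B_i)$ of the description of $Q$; since all of these data are part of the input $(d;m;k;A_0,\ldots,A_d,A'_1,\ldots,A'_m)$ and $(b,B)$, the total encoding length of the $\ell$ systems is linear in the description size of the input, hence polynomial as claimed.

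The step requiring care, and the conceptual heart of the statement, is the passage to the complement together with the role of the strict inequalities. One cannot express containment itself directly as a semidefinite feasibility problem, since it carries a universal quantifier over $x\in\pisa$; negating it turns this into the existential form above, which is what makes the SDFP formulation possible. Because we only ask for the existence of a violating point, we never need a closed (or quantifier-free) description of $\pisa$, which is essential as $\pisa$ need not be basic closed or even closed in general. The strict inequality $b_i + B_i x < 0$ then faithfully encodes ``$x$ lies strictly outside the $i$-th halfspace'' without any perturbation or closure argument, which is precisely why strict inequalities must be admitted in the SDFPs. Finally, the projection quantifier $\exists y$ is absorbed at no cost by treating $(x,y)$ jointly as the unknown of the feasibility problem.
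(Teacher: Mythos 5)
Your proposal is correct and follows essentially the same route as the paper: for each of the $\ell$ rows of the outer polytope you form the SDFP $A(x,y)\succeq 0$, $(b+Bx)_i<0$ in the joint unknowns $(x,y)$, so that containment is the complement of the disjunction of these $\ell$ feasibility problems, each of size polynomial in the input. Your write-up is in fact slightly more careful than the paper's (which garbles the final equivalence by saying non-containment holds if one of the SDFPs is \emph{not} solvable, where it should say solvable), and your remarks on why the closure issues for $\pisa$ are irrelevant here are a nice, accurate addition.
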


\begin{proof}
Consider a spectrahedron $S_A$ given by the linear matrix pencil $A(x,y)$ and 
the coordinate projection of $S_A$ onto the $x$-variables $\pisa$.
Given an $\HH$-polytope $P = \{x \in\R^d\ |\ b+Bx \ge 0\}$ with $b\in\Q^l$ and
$B\in\Q^{l\times d}$, construct for each $i\in [l]$ the SDFP
\[
  (b+Bx)_i < 0,\ A(x,y)\succeq 0
\]
involving a strict inequality.
Then $\pisa\not\subseteq P$ if one of the $l$ SDFPs is not solvable.
\end{proof}

While the $\pis$-in-$\HH$ containment problem is efficiently solvable in
practice, the situation changes if the outer set is given as the projection of
an $\HH$-polytope. 

\begin{thm}\label{thm:complexity_pis_pis}
\ 

\begin{enumerate}
  \item
  Deciding whether an (projected) $\HH$-polytope or a (projected)
  spectrahedron is contained in a (projected) spectrahedron is co-NP-hard. 
  \item
  Deciding whether a (projected) spectrahedron is contained in the projection
  of an $\HH$-polytope is co-NP-hard.
\end{enumerate}
\end{thm}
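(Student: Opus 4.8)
The plan is to prove both co-NP-hardness statements by polynomial-time reductions from problems already known to be hard, reusing the complexity results established earlier in the section.

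\textbf{Part (1).} The strategy is to reduce from the $\HH$-in-$\pih$ containment problem, which is co-NP-complete by Theorem~\ref{thm:complexity_h_pih}. Given an $\HH$-polytope $Q$ and a projected $\HH$-polytope $\pi(P)$, the idea is to replace $\pi(P)$ by its normal form as a (projected) spectrahedron. Concretely, if $P = \{(x,y) \mid a + Ax + A'y \ge 0\}$, then by the normal form~\eqref{eq:normalform} the pencil $A(x,y) = \bigoplus_{i} (a+Ax+A'y)_i$ is diagonal and describes $P$ as a spectrahedron, so $\pi(P)$ becomes a projected spectrahedron $\pi(S_A)$ with $\pi(P) = \pi(S_A)$ as sets. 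Since this transformation is clearly polynomial in the input size (the pencil just stacks the linear forms on the diagonal), the containment question $Q \subseteq \pi(P)$ is equivalent to $Q \subseteq \pi(S_A)$. As $Q$ can itself be viewed trivially as a (projected) spectrahedron via its own normal form, this exhibits $\HH$-in-$\pih$ as a special case of the various containment problems in statement~(1), giving co-NP-hardness for all of them.

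\textbf{Part (2).} Here the strategy is analogous but with the roles reversed: the outer set is now a projected $\HH$-polytope and the inner set is a (projected) spectrahedron. We again reduce from $\HH$-in-$\pih$, which by Theorem~\ref{thm:complexity_h_pih} remains co-NP-complete even when the inner set is an ordinary (non-projected) $\HH$-polytope. Given the inner $\HH$-polytope $Q = \{x \mid b + Bx \ge 0\}$, we pass to its normal form $\bigoplus_i (b+Bx)_i \succeq 0$, exhibiting $Q$ as a spectrahedron (hence trivially a projected spectrahedron with $m = 0$). The outer projected $\HH$-polytope is left unchanged. This polynomial-time reduction realizes the co-NP-complete problem $\HH$-in-$\pih$ as an instance of the $\pis$-in-$\pih$ containment problem of statement~(2), yielding its co-NP-hardness.

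The main subtlety to address is that the reductions are only \emph{hardness} reductions: statement~(2) in particular concerns an outer set that is a projection of an $\HH$-polytope, where membership testing itself is co-NP-complete, so no matching co-NP membership claim is made, and we only assert hardness. One must therefore check carefully that the normal-form transformation preserves the containment relation exactly and is computable in polynomial time---both are immediate since stacking linear forms on a diagonal does not increase the encoding length beyond a polynomial factor. The only genuine obstacle is ensuring that the boundedness hypothesis required in the definition of $\pih$-polytopes and spectratopes is respected by the reduction; this is handled by noting that the $\VV$-polytope underlying the hardness of $\HH$-in-$\VV$ (and thus $\HH$-in-$\pih$) is bounded by construction, so all sets arising in the reduction remain bounded, keeping us within the stated problem classes.
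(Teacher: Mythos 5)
Your Part (2) is correct and is essentially the paper's own argument: the paper proves (2) as a direct consequence of Theorem~\ref{thm:complexity_h_pih}, exactly via the observation that an $\HH$-polytope is a special case of a (projected) spectrahedron through the normal form~\eqref{eq:normalform}.

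Part (1), however, has a genuine gap. Your reduction from $\HH$-in-$\pih$ replaces the outer projected $\HH$-polytope by the projection of a diagonal pencil, so every instance it produces has an outer set that is genuinely a \emph{projection} of a spectrahedron, with the projection variables essential to the encoding. This establishes co-NP-hardness only for the outer-$\pis$ variants ($\HH$-in-$\pis$, $\pih$-in-$\pis$, $\sym$-in-$\pis$, $\pis$-in-$\pis$), whereas the parenthetical in statement (1) also asserts hardness when the outer set is a plain, non-projected spectrahedron ($\HH$-in-$\sym$, $\pih$-in-$\sym$, $\pis$-in-$\sym$, etc.). Hardness of a containment problem over a \emph{larger} outer class does not imply hardness over the smaller class contained in it---the implication runs the other way---so your claim of ``co-NP-hardness for all of them'' is unsupported for the outer-$\sym$ cases. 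Nor can the gap be closed within your reduction by eliminating the projection variables of the outer polytope, since Fourier--Motzkin elimination can blow up the quantifier-free $\HH$-description exponentially. The paper avoids this by starting from a different hardness source: the known co-NP-hardness of $\HH$-in-$\sym$ (the matrix cube result, \cite[Proposition 4.1]{bental2002} and \cite[Theorem 3.4]{ktt1}), which is a special case of \emph{every} problem listed in (1), because the inner $\HH$-polytope embeds into each inner class via the normal form and a trivial projection ($m=0$), and an outer spectrahedron is trivially a projected spectrahedron. Adding this hardness source for the outer-$\sym$ cases (your reduction then remains a valid, if redundant, alternative for the outer-$\pis$ cases) repairs the proof.
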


\begin{proof}
Since the problem $\HH$-in-$\sym$ is co-NP-hard (see~\cite[Proposition
4.1]{bental2002} and~\cite[Theorem 3.4]{ktt1}), deciding whether a projected
$\HH$-polytope or projected spectrahedron is contained in a (projected)
spectrahedron is co-NP-hard as well. This is part (1) of the theorem.

Parts (2) is a consequences of Theorem~\ref{thm:complexity_h_pih}.
\end{proof}

\subsection{Hol-Scherer's Positivstellensatz} \label{sec:hol}

Consider a symmetric matrix polynomial $G=G(x)\in\sym^k[x]$ in the variables
$x=(x_1,\ldots,x_d)$, i.e., a symmetric matrix whose entries lie in the
polynomial ring $\R[x]$. We say $G$ has degree $t$ if the maximum degree of
the entries is $t$, i.e., $ t = \max\{ \deg(G_{ij})\ |\ i,j\in [k] \}$.

For matrices $M = (M_{ij})_{i,j=1}^l\in\sym^{kl}$ and $N\in\sym^k$, define 
\begin{equation} \label{eq:scalar}
  \left\langle M,N \right\rangle_l 
  := \left( \left\langle M_{ij},N \right\rangle \right)_{i,j=1}^l 
  = \sum_{i,j=1}^l E_{ij} \cdot \left\langle M_{ij},N \right\rangle ,
\end{equation}
where $E_{ij}$ denotes the $l\times l$-matrix with one in the $(i,j)$th entry
and zero otherwise, and $\langle\cdot,\cdot\rangle$ is the Euclidean inner
product for matrices.
We refer to~\eqref{eq:scalar} as the \emph{$l$th scalar product}. It can be
seen as a generalization of the Gram matrix representation of a positive
semidefinite matrix. Indeed, for positive semidefinite matrices $M$ and $N$
the $l\times l$-matrix $\left\langle M,N \right\rangle_l$ is positive
semidefinite as well~\cite{Hol2004}.

For any positive integer $l$, define the quadratic module generated by $G(x)$
\begin{equation} \label{eq:qm-matrix2}
  \qm^l(G) = \left\{ S_0(x) + \left\langle S(x),G(x) \right\rangle_l\ |\
    S_0(x)\in\SOS^l[x],\ S(x)\in\SOS^{kl}[x] \right\} \subseteq \sym^l[x] ,
\end{equation}
where $\SOS^k[x]\subseteq\sym^k$ is the set of sum of squares 
$k\times k$-matrix polynomials.
A matrix polynomial $S=S(x)\in\sym^k[x]$ is called \emph{sum of squares}
(sos-matrix for short) if it has a decomposition $S = U(x)U(x)^T$ with
$U(x)\in\R^{k\times m}[x]$ for some positive integer $m$.
Equivalently, $S$ has the form $(I_k \otimes [x]_t)^T Z (I_k\otimes [x]_t)$,
where $[x]_t$ denotes the monomial basis in $x$ up to 
$t = \max\{\deg(S_{ij}(x))/2\ |\ i,j\in[k]\}$ and $Z$ is a positive
semidefinite matrix of appropriate size.
For $k=1$, $S$ is called a sos-polynomial.
Checking whether a matrix polynomial is a sos-matrix is an
SDFP~\eqref{eq:sdfp}. 
Obviously, every element in $\qm^l(G)$ is positive semidefinite on the
semialgebraic set $S_G := \{x\in\R^d\ |\ G(x)\succeq 0 \}$.
Hol and Scherer~\cite{Hol2006} showed that for matrix polynomials positive
definite on $S_G$ the converse is true under the Archimedeanness condition.

We state the desired Positivstellensatz of Hol and Scherer.
See~\cite{klep2010} for an alternative proof by Klep and Schweighofer using
the concept of pure states.

\begin{prop}[{\cite[Corollary 1]{Hol2006}}]
\label{prop:hol}
Let $l$ be a positive integer and let $S_G = \{x\in\R^d\ |\ G(x)\succeq 0 \}$
for a matrix polynomial $G\in\sym^k[x]$. If the quadratic module $\qm^l(G)$ is
Archimedean, then it contains every matrix polynomial $F\in\sym^l[x]$ positive
definite on $S_G$.
\end{prop}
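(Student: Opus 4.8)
The plan is to adapt Putinar's functional-analytic proof of the scalar Positivstellensatz~\cite{putinar1993} to the matrix-valued setting, using the Archimedean hypothesis on $\qm^l(G)$ exactly where Putinar uses boundedness in the scalar case. I would view $\sym^l[x]$ as a real vector space carrying the finest locally convex topology and regard $\qm^l(G)$ as a convex cone inside it. Assuming for contradiction that $F \notin \qm^l(G)$ while $F$ is positive definite on $S_G$, the Archimedean condition (in particular $N\,I_l - \big(\sum_p x_p^2\big) I_l \in \qm^l(G)$ for some $N$) lets me reduce to a closed cone and apply an Eidelheit/Hahn--Banach separation. This produces a nonzero linear functional $L : \sym^l[x] \to \R$ with $L \ge 0$ on $\qm^l(G)$ and $L(F) \le 0$. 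Since $I_l \in \qm^l(G)$ (take $S_0 = I_l$, $S = 0$), I normalize $L(I_l) = 1$, and Archimedeanness forces $L$ to be bounded on the coordinate functions, which is what makes the ensuing moment construction converge.

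Next I would run a matrix GNS construction on $L$. Positivity of $L$ on $\SOS^l[x]$ equips the space with a positive semidefinite bilinear form; quotienting by its kernel and completing yields a Hilbert space on which multiplication by $x_1,\dots,x_d$ acts as commuting self-adjoint operators $X_1,\dots,X_d$, bounded by $N$ thanks to Archimedeanness, while the constant matrices in $\sym^l$ supply the matricial structure. The spectral theorem for the commuting tuple $(X_1,\dots,X_d)$ then yields a positive matrix-valued measure $\mu$ on a compact subset of $\R^d$ representing $L$, i.e. $L(H) = \int \langle \mathrm{d}\mu(x), H(x)\rangle$ for $H \in \sym^l[x]$. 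The decisive point is that positivity of $L$ on the submodule $\{\langle S,G\rangle_l : S \in \SOS^{kl}[x]\}$ forces $\supp\mu \subseteq S_G$: testing against multipliers $S$ assembled from vectors $w\in\R^k$ converts the scalar product $\langle S,G\rangle_l$ into the pointwise condition $w^T G(x) w \ge 0$ $\mu$-almost everywhere for all $w$, i.e. $G(x)\succeq 0$ on the support.

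With $\mu$ supported in $S_G$ and $F$ positive definite there, the integral $L(F) = \int \langle \mathrm{d}\mu(x), F(x)\rangle$ is strictly positive, contradicting $L(F)\le 0$; hence $F \in \qm^l(G)$. The main obstacle is exactly the passage from the abstract separating functional to a genuine point of $S_G$: one must implement the matrix GNS construction so that the commutative spectral measure of $(X_1,\dots,X_d)$ lands inside $S_G$, which is where the scalar product $\langle\cdot,G\rangle_l$ and the matrix positivity of $G$ must be reconciled, and one must ensure the separating cone is genuinely closed. An alternative route that avoids the explicit spectral theorem is the pure-states method of Klep and Schweighofer~\cite{klep2010}: after separating, pass to an extreme (pure) state by Krein--Milman and invoke their lemma that pure states on such matrix quadratic modules are vector evaluations $H \mapsto v^T H(x_0) v$ with $x_0 \in S_G$, which again contradicts positive definiteness of $F$ on $S_G$; there the crux shifts to the characterization of the pure states.
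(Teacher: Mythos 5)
The paper does not actually prove Proposition~\ref{prop:hol}: it is imported verbatim from Hol--Scherer \cite[Corollary 1]{Hol2006}, with only a pointer to \cite{klep2010} for Klep--Schweighofer's alternative proof via pure states. So there is no in-paper argument to match; your proposal must be judged against those two external proofs. Your primary route --- Eidelheit/Hahn--Banach separation, a matrix GNS construction on $\R[x]^l$, boundedness of the multiplication operators $X_1,\dots,X_d$ from Archimedeanness, and a joint spectral measure localized in $S_G$ --- is the direct matrix analogue of Putinar's original functional-analytic argument, and it does go through: the skeleton and the crux you identify (forcing $\supp\mu\subseteq S_G$ via rank-one multipliers $S=\sigma(x)\,(v\otimes w)(v\otimes w)^T$, which scalarize $\langle S,G\rangle_l$ into $\sigma(x)\,(w^TG(x)w)\,vv^T$) are the right ones. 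Your fallback paragraph is exactly the pure-states proof the paper cites, where indeed the entire difficulty migrates into the classification of pure states on the module as point-vector evaluations $H\mapsto v^TH(x_0)v$ with $x_0\in S_G$. What the GNS/spectral route buys is self-containedness at the cost of the Hilbert-space machinery; the pure-states route trades the spectral theorem for Krein--Milman plus a delicate extremality lemma; Hol--Scherer's own argument is different from both, proceeding essentially by scalarization through the scalar Putinar theorem.

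Two corrections to the details. First, your worry that ``one must ensure the separating cone is genuinely closed'' is misplaced, and attempting to close $\qm^l(G)$ would be counterproductive (you must certify membership of $F$ in the module itself, not in its closure). Separation in the finest locally convex topology on $\sym^l[x]$ needs no closedness: Archimedeanness makes $I_l$ an order unit --- for every $H\in\sym^l[x]$ there is $N$ with $NI_l\pm H\in\qm^l(G)$, proved by the standard induction on degree from $(N-x^Tx)I_l\in\qm^l(G)$ --- hence an algebraic interior point of the cone, which is exactly what Eidelheit separation requires; the same order-unit property rules out $L(I_l)=0$ and legitimizes your normalization. Second, to get $\|X_i\|\le\sqrt{N}$ you need not just $(N-x^Tx)I_l\in\qm^l(G)$ but its congruences $u(x)^T\bigl((N-x^Tx)I_l\bigr)u(x)$ paired against $L$; this is fine because matrix quadratic modules are closed under $V^T(\cdot)V$ for polynomial matrices $V$, but that closure property should be stated and checked, since it is also what makes the bilinear form $\langle u,w\rangle=L\bigl(\tfrac12(uw^T+wu^T)\bigr)$ interact correctly with the module structure.
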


By restricting to diagonal matrix polynomials $G$ and $l=1$, one gets the
Positivstellensatz of Putinar~\cite{putinar1993} as a corollary.

\begin{cor}\label{cor:putinar}
Let $G=\{g_1,\ldots,g_k\}\subseteq\R[x]$ and $S_G = \{x\in\R^d\ |\ g\ge 0\
\forall g\in G\}$.
If the quadratic module
\[
  M(G) = \left\{s_0(x) + \sum_{i=1}^k s_i(x) g_i(x)\ |\
s_0,s_1,\ldots,s_m\in\Sigma^1[x] \right\}
\]
is Archimedean, then it contains every polynomial $f\in\R[x]$ positive on
$S_G$.
\end{cor}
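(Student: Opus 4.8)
The plan is to deduce Corollary~\ref{cor:putinar} directly from Proposition~\ref{prop:hol} by specializing to the diagonal, scalar case. Concretely, given the finite set $G=\{g_1,\ldots,g_k\}\subseteq\R[x]$, I would form the single diagonal matrix polynomial
\[
  \widehat{G}(x) = \diag(g_1(x),\ldots,g_k(x)) \in\sym^k[x],
\]
so that its positivity domain is exactly $S_{\widehat{G}} = \{x\in\R^d\ |\ \widehat{G}(x)\succeq 0\} = \{x\in\R^d\ |\ g_i(x)\ge 0\ \forall i\} = S_G$. I then set $l=1$ in Proposition~\ref{prop:hol}, which reduces symmetric $1\times 1$ matrix polynomials to ordinary polynomials in $\R[x]$.

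The key computation is to identify the quadratic module $\qm^1(\widehat{G})$ with Putinar's module $M(G)$. For $l=1$ the first scalar product $\langle\cdot,\cdot\rangle_1$ in~\eqref{eq:scalar} is just the Euclidean inner product $\langle S(x),\widehat{G}(x)\rangle$ of two $k\times k$ matrices. I would write $S(x)\in\SOS^k[x]$ and, because $\widehat{G}$ is diagonal, observe that $\langle S(x),\widehat{G}(x)\rangle = \sum_{i=1}^k S_{ii}(x)\,g_i(x)$, so only the diagonal entries of the sos-matrix $S$ contribute. The point to verify is that as $S$ ranges over $\SOS^k[x]$, the diagonal entries $S_{ii}$ range exactly over $\SOS^1[x]$: each diagonal block of a sos-matrix is itself a sos-polynomial, and conversely any tuple of sos-polynomials can be assembled into a diagonal sos-matrix. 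Combined with $S_0(x)\in\SOS^1[x]$, this shows
\[
  \qm^1(\widehat{G}) = \left\{ s_0(x) + \sum_{i=1}^k s_i(x)\,g_i(x)\ |\ s_0,s_1,\ldots,s_k\in\SOS^1[x] \right\} = M(G).
\]

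Having matched the two modules, the hypotheses transfer verbatim: $\qm^1(\widehat{G})$ is Archimedean precisely when $M(G)$ is, and a polynomial $f\in\R[x]$ is positive on $S_G$ exactly when the $1\times 1$ matrix polynomial $f$ is positive definite on $S_{\widehat{G}}$. Proposition~\ref{prop:hol} then yields $f\in\qm^1(\widehat{G})=M(G)$, which is the desired conclusion.

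The main obstacle, such as it is, lies in the module-identification step rather than in any hard analysis: one must argue cleanly that restricting to diagonal $\widehat{G}$ does not lose generality in the sos constraint, i.e., that allowing a full sos-matrix $S$ gives no more than the diagonal sos-polynomials once paired against a diagonal $\widehat{G}$. This is the place where a careless reader might worry about off-diagonal cross terms, so I would state explicitly that the pairing annihilates all off-diagonal entries of $S$, making the full-matrix sos condition collapse to $k$ independent scalar sos conditions on the diagonal. Everything else is a direct substitution of $l=1$ and the diagonal structure into Proposition~\ref{prop:hol}.
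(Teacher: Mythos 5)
Your proof is correct and takes essentially the same route as the paper, which obtains Corollary~\ref{cor:putinar} from Proposition~\ref{prop:hol} precisely by restricting to a diagonal matrix polynomial and $l=1$. Your identification $\qm^1(\widehat{G})=M(G)$ --- the pairing $\langle S,\widehat{G}\rangle$ kills off-diagonal entries, diagonal entries of sos-matrices are sos-polynomials, and any diagonal matrix of sos-polynomials is an sos-matrix --- correctly fills in the detail the paper leaves implicit.
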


Interestingly, the usual quadratic module $\qm^1(G)$ is Archimedean if
and only if $\qm^l(G)$ is for any positive integer $l$.

\begin{prop}\label{prop:archimedean}
The following two statements are equivalent.
\begin{enumerate}
  \item
  For some positive integer $l$, the quadratic module $\qm^l(G)$ is 
Archimedean.
  \item
  For all positive integers $l$, the quadratic module $\qm^l(G)$ is 
Archimedean.
\end{enumerate}
Furthermore, assume $G$ is a linear pencil.
Then $\qm^l(G)$ for any positive integer $l$ is Archimedean if and only if the
spectrahedron $S_G$ is bounded.
\end{prop}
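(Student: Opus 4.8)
The plan is to prove the equivalence of (1) and (2) by showing that $\qm^l(G)$ is Archimedean for some (equivalently every) $l$ exactly when the ordinary quadratic module $\qm^1(G)$ is, and then to settle the linear-pencil statement by a direct boundedness certificate. Throughout I use the standard working definition that $\qm^l(G)$ is \emph{Archimedean} if $(N-\|x\|^2)I_l\in\qm^l(G)$ for some $N\in\N$. Since (2)$\Rightarrow$(1) is trivial, everything reduces to the two transfer steps below together with the linear-pencil equivalence.

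First I would move between different values of $l$ using the block structure of sos-matrices. To go \emph{up}, suppose $N-\|x\|^2=s_0(x)+\langle S(x),G(x)\rangle_1\in\qm^1(G)$ with $s_0\in\SOS^1[x]$ and $S\in\SOS^k[x]$. Writing $S=UU^T$ one has $I_l\otimes S=(I_l\otimes U)(I_l\otimes U)^T\in\SOS^{kl}[x]$ and $\langle I_l\otimes S,G\rangle_l=\langle S,G\rangle I_l$, while $s_0I_l\in\SOS^l[x]$; hence
\[
  (N-\|x\|^2)I_l=s_0(x)I_l+\langle I_l\otimes S(x),G(x)\rangle_l\in\qm^l(G),
\]
so $\qm^1(G)$ Archimedean forces $\qm^l(G)$ Archimedean for all $l$. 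To go \emph{down}, I extract the $(1,1)$-entry: the $(1,1)$ scalar entry of an sos-matrix is an sos-polynomial, and the $(1,1)$ diagonal $k\times k$ block $S_{11}=V_1V_1^T$ of an sos-matrix $S=VV^T$ is itself an sos-matrix. Applying this to a representation of $(N-\|x\|^2)I_l\in\qm^l(G)$ yields $N-\|x\|^2\in\qm^1(G)$. Combining the two directions gives (1)$\Leftrightarrow$(2): if $\qm^{l_0}(G)$ is Archimedean, step down to $l=1$ and then back up to every $l$.

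For the second part, the easy direction is that Archimedeanness implies boundedness for any $G$: every element of $\qm^1(G)$ is nonnegative on $S_G$, so $N-\|x\|^2\in\qm^1(G)$ forces $\|x\|^2\le N$ on $S_G$. The substance is the converse for a linear pencil $G(x)=G_0+\sum_{p}x_pG_p$ with $S_G$ bounded; I may assume $S_G\neq\emptyset$, the empty case being immediate. Here I would translate boundedness into a dual statement. The recession cone of $S_G$ equals $\{v:\sum_p v_pG_p\succeq0\}$, and testing positive semidefiniteness against $\psd^k$ identifies this cone with the dual cone $K^{\ast}$ of $K:=\{(\langle G_p,Y\rangle)_{p=1}^d:Y\in\psd^k\}$, the image of the convex cone $\psd^k$ under a linear map. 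Boundedness of $S_G$ means $\rec(S_G)=\{0\}$, i.e.\ $K^{\ast}=\{0\}$, hence $\cl K=\R^d$; being a full-dimensional convex cone, $K$ then satisfies $\inter K=\inter(\cl K)=\R^d$, so $K=\R^d$.

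The remaining steps are concrete. Surjectivity $K=\R^d$ provides, for each $p$, matrices $Y_p^{+},Y_p^{-}\in\psd^k$ with $\langle G_q,Y_p^{\pm}\rangle=\pm\delta_{pq}$; reading $Y_p^{\pm}$ as constant sos-matrices gives $\langle Y_p^{\pm},G(x)\rangle=\langle Y_p^{\pm},G_0\rangle\pm x_p\in\qm^1(G)$, so $N\pm x_p\in\qm^1(G)$ for $N$ large enough. I then pass from linear to quadratic bounds by multiplying module elements by squares: since
\[
  (N-x_p)(N+x_p)^2+(N+x_p)(N-x_p)^2=2N\,(N^2-x_p^2),
\]
and each summand lies in $\qm^1(G)$ (a module element times a square), we obtain $N^2-x_p^2\in\qm^1(G)$; summing over $p$ yields $dN^2-\|x\|^2\in\qm^1(G)$, so $\qm^1(G)$ is Archimedean, and by the first part this transfers to every $l$. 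The main obstacle is the duality step identifying boundedness with $K=\R^d$: it is exactly here that linearity of the pencil is used (the recession cone has this clean description only for linear $G$), and some care is needed to upgrade $\cl K=\R^d$ to the equality $K=\R^d$ via full-dimensionality rather than mere density.
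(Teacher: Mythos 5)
Your proof of the equivalence of (1) and (2) is correct and is, in substance, the paper's own argument: the paper takes a representation $(N-x^Tx)I_l = S_0 + \langle S,G\rangle_l$, extracts the $(1,1)$ data $(S_0)_{11}$ and the block $S_{11}$, and re-embeds them into each diagonal slot via $E_{ii}\otimes S_{11}$ of the target size $m$; your route through $l=1$ (down by taking the $(1,1)$ entry and the $(1,1)$ block, which stay sos, then up via $I_l\otimes S$ and $\langle I_l\otimes S,G\rangle_l = \langle S,G\rangle I_l$) is the same mechanism, mildly repackaged. Both you and the paper use, without proof, the same standard reduction of Archimedeanness to $(N-\|x\|^2)I_l\in\qm^l(G)$, so no gap there relative to the paper.

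For the linear-pencil statement the situation is different: the paper does not prove it but cites \cite{klep2013} (Corollary 4.4.2; see also \cite{Klep2011}), whereas you give a direct argument. Your duality chain — $\rec(S_G)=\{v\ |\ \sum_p v_pG_p\succeq 0\}=K^{\ast}$ for $K=\{(\langle G_p,Y\rangle)_{p=1}^d\ |\ Y\in\psd^k\}$, upgrading $\cl K=\R^d$ to $K=\R^d$ via $\inter(\cl K)=\inter K$, producing $N\pm x_p\in\qm^1(G)$ from constant psd multipliers, and the square-multiplier identity yielding $N^2-x_p^2$ — is correct and is a nice self-contained treatment, \emph{but only when} $S_G\neq\emptyset$. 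The genuine gap is your sentence ``the empty case being immediate.'' It is not: an empty spectrahedron is bounded, so the proposition asserts in particular that every infeasible linear pencil has an Archimedean quadratic module; this amounts to $-1\in\qm^1(G)$ whenever $S_G=\emptyset$ (which then makes $\qm^1(G)$ all of $\R[x]$), and that is exactly the content of Klep--Schweighofer's infeasibility certificates for LMIs \cite{Klep2011} --- a nontrivial theorem whose difficulty lies in weakly infeasible pencils, where no constraint qualification holds. Your machinery collapses there: the identity $\rec(S_G)=\{v\ |\ \sum_p v_pG_p\succeq 0\}$ requires $S_G\neq\emptyset$, and for empty $S_G$ one can have $K\neq\R^d$; e.g.\ $G(x)=\diag(-1,x)$ in one variable has $S_G=\emptyset$ (hence bounded) but $K=\R_+$, so the matrices $Y_1^{\pm}$ you need do not exist. (In this toy example Archimedeanness is easily checked by hand via $S=\diag(x^2,0)$, but no elementary argument covers general infeasible pencils.) So you should either restrict your direct proof to nonempty $S_G$ and invoke the infeasibility-certificate theorem for the empty case, or cite \cite{klep2013} outright, as the paper does.
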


The equivalence of the first two statements was proved by Helton, Klep, and
McCullough for monic linear matrix pencils in the language of their
matricial relaxation; see~\cite[Lemma 6.9]{Helton2010}. We recapitulate the
proof and extend it to quadratic modules generated by arbitrary matrix
polynomials.

\begin{proof}
The implication $(2)\Longrightarrow(1)$ is obvious.
To show the reverse implication, note first that $\qm^l(G)$ is Archimedean if
and only if $(N-x^T x)I_l \in\qm^l(G)$ for some positive integer $N$.
Let $m\in\N$ be arbitrary but fixed.
We have to show that $(N-x^T x)I_m \in\qm^m(G)$.
Denote by $E_{11}$ the $m\times m$-matrix with one in the entry $(1,1)$ and
zero elsewhere and let $Q$ be the $l\times m$-matrix with one in the entry
$(1,1)$ and zero elsewhere.
Clearly, $E_{11} = Q^T Q$. 
Let $ (N-x^T x)I_l = S_0 + \left\langle S,G \right\rangle_l$ with
$S=(S_{ij})_{i,j=1}^l$ be the desired sos-representation.
Setting $\tilde{S}_0 := Q^T S_0 Q = (S_0)_{11} E_{11} \in\SOS^m[x]$ and 
$\tilde{S} = E_{11} \otimes S_{11} \in\SOS^m[x]$, we get
\[
  (N-x^T x)E_{11} = Q^T (N-x^T x)I_l Q 
  = Q^T(S_0 + \left\langle S,G \right\rangle_l)Q
  = \tilde{S}_0 + E_{11} \left\langle S_{11},G \right\rangle
  = \tilde{S}_0 + \langle \tilde{S},G \rangle_m .
\]
Applying the same to $E_{ii}$ for $i\in [m]$ and using additivity of the
quadratic module $\qm^m(G)$ yields $(N-x^T x)I_m \in\qm^m(G)$.

The last statement follows from~\cite[Corollary 4.4.2]{klep2013} (see
also~\cite{Klep2011}) together with the shown equivalence.
\end{proof}

\section{A Bilinear Formulation of the
\texorpdfstring{$\pih$}{piH}-in-\texorpdfstring{$\pih$}{piH} Containment
Problem} \label{sec:pihpih}

For $a\in\R^k,\ A\in\R^{k\times d},\ A'\in\R^{k\times m}$ and 
$b\in\R^l,\ B\in\R^{l\times d},\ B'\in\R^{l\times n}$ let
\begin{align}
\begin{split} \label{eq:projpoly}
  \pipa &= \left\{x\in\R^d\ |\ \exists y\in\R^m :\ a+Ax+A'y\geq 0\right\} \\
  \text{and}\quad 
  \pipb &= \left\{x\in\R^d\ |\ \exists y'\in\R^n:\ b+Bx+B'y'\geq 0 \right\} \\
\end{split}
\end{align}
be projections of the $\HH$-polyhedra $P_A$ and $P_B$, respectively. 
Note that both $\pipa$ and $\pipb$ are $\HH$-polyhedra themselves (and thus
closed sets).
A quantifier-free $\HH$-description however can be exponential in the input
size $(d,m,k)$ respectively $(d,n,l)$; cf. Section~\ref{sec:prelim}.

Our starting point is the formulation of the containment problem as a bilinear
feasibility problem.
Interestingly, the projection variables $y'$ of the outer polyhedron do not
appear in the feasibility system (or the optimization version below) only the
corresponding coefficients $B'$.

\begin{thm} \label{thm:pihpih}
Let $\pipa$ and $\pipb$ be as defined in~\eqref{eq:projpoly} and $\pipa$ be 
nonempty.
\begin{enumerate}
  \item
  $\pipa$ is contained in $\pipb$ if and only if
\[
  z^T (b+Bx) \geq 0\ \text{ on }\ 
  \pipa\times\left(\kernel (B'^T)\cap\R^l_+\right) .
\]
  \item
  Let $\kernel (B'^T)\cap\R^l_+ = \linspan (B')^{\bot}\cap\R^l_+ \neq \{0\}$.
  Then $\pipa\subseteq\pipb$ if and only if
\[
  z^T (b+Bx) \geq 0\ \text{ on }\ 
  \pipa\times \left(\kernel(B'^T)\cap\Delta^l\right) ,
\]
  where $\Delta^l = \{z\in\R^l\ |\ \mathds{1}_l^T z = 1,\ z\geq 0\}$ is the
$l$-simplex.
\end{enumerate}
\end{thm}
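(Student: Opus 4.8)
The plan is to derive both parts from a pointwise duality characterization of membership in $\pipb$, and then to handle the two parts by, respectively, a joint quantification and a homogeneity argument. For a fixed $x\in\R^d$ one has $x\in\pipb$ precisely when the linear system $B'y'\ge -(b+Bx)$ is solvable in the free variable $y'\in\R^n$. I would invoke a theorem of the alternative (the inhomogeneous Farkas lemma, or Gale's transposition theorem): this system is feasible if and only if every $z\in\R^l_+$ with $B'^T z = 0$ satisfies $z^T(b+Bx)\ge 0$. Since $\{z\in\R^l_+\ |\ B'^T z=0\} = \kernel(B'^T)\cap\R^l_+$, this yields
\[
  x\in\pipb \iff z^T(b+Bx)\ge 0 \ \text{ for all } z\in\kernel(B'^T)\cap\R^l_+ .
\]

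Part (1) then follows immediately. The containment $\pipa\subseteq\pipb$ means that every $x\in\pipa$ satisfies the right-hand condition above, and quantifying jointly over $x$ and $z$ gives exactly nonnegativity of $z^T(b+Bx)$ on the product $\pipa\times(\kernel(B'^T)\cap\R^l_+)$. It is worth noting for the plan that the projection variables $y$ of the inner set play no role beyond defining $\pipa$, while the variables $y'$ of the outer set are eliminated by the duality, leaving only the coefficient data $B'$ in the constraint $B'^T z = 0$; this explains the phenomenon highlighted before the statement.

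For part (2) the idea is to pass from the full cone $\kernel(B'^T)\cap\R^l_+$ to its simplex base $\kernel(B'^T)\cap\Delta^l$, using that $z\mapsto z^T(b+Bx)$ is positively homogeneous in $z$. The equality $\kernel(B'^T)=\linspan(B')^{\bot}$ is pure linear algebra (the kernel of $B'^T$ is the orthogonal complement of the column space of $B'$), so the hypothesis $\kernel(B'^T)\cap\R^l_+\ne\{0\}$ guarantees the base $\kernel(B'^T)\cap\Delta^l$ is nonempty. The implication restricting to the smaller set is trivial since $\Delta^l\subseteq\R^l_+$. Conversely, given nonnegativity on the simplex section, I would take any $z\in\kernel(B'^T)\cap\R^l_+$: if $z=0$ the inequality is automatic, and if $z\ne 0$ then $\lambda:=\mathds{1}_l^T z>0$, so $z/\lambda\in\kernel(B'^T)\cap\Delta^l$, whence scaling by $\lambda$ recovers $z^T(b+Bx)\ge 0$.

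I expect the only genuine obstacle to be invoking the correct form of the theorem of the alternative with the right orientation of the cone: one must check that the free (rather than sign-constrained) variable $y'$ produces the equality constraint $B'^T z=0$, and that the sign of the inhomogeneous term $b+Bx$ delivers $z^T(b+Bx)\ge 0$ rather than its negation. Everything else — the joint quantification in part (1) and the homogeneity scaling in part (2) — is routine.
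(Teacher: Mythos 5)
Your proposal is correct and follows essentially the same route as the paper: part (1) is the same application of Farkas' lemma eliminating the free variable $y'$ into the constraint $B'^T z=0$ (the paper states it contrapositively via a witness $x\in\pipa\setminus\pipb$, you state it as a pointwise membership criterion for $\pipb$, which is equivalent), and part (2) is the same normalization $z\mapsto z/(\mathds{1}_l^T z)$ passing between the cone $\kernel(B'^T)\cap\R^l_+$ and its simplex base. Your handling of the orientation of the alternative and of the $z=0$ case is accurate, so there is nothing to fix.
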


The additional assumption on the kernel of $B'^T$ seems to be somewhat
artificial, however, if the projection of $P_B$ to the $x$-coordinates is
bounded, then the condition holds.
The two main advantages of part (2) in Theorem~\ref{thm:pihpih} are the
boundedness of the $z$ variables and that the condition $z^T (b+Bx) \geq 0$ is
indeed an inequality. 
(Note that in part (1), containment is equivalent to $z^T (b+Bx)\equiv 0$ on
$\pipa\times(\kernel(B'^T)\cap\R^l_+)$ as $(x,z)=(x,0)$ is a feasible
solution for all $x\in\pipa$.)
The next lemma serves as a first step in a geometric interpretation of this
precondition.

\begin{lemma}\label{lem:kernel}
Let $\pipb$ be as in~\eqref{eq:projpoly}.
Then $\kernel(B'^T)\cap\R^l_+ = \linspan(B')^{\bot}\cap\R^l_+ = \{0\}$ 
if and only if $\linspan(B')\cap\R^l_{++}\neq\emptyset$. 
In this case, $\pipb=\R^d$.

In particular, if $\pipb$ is bounded, then 
$\kernel(B'^T)\cap\R^l_+ = \linspan (B')^{\bot}\cap\R^l_+ \neq \{0\}$.
\end{lemma}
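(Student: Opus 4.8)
The first equality $\kernel(B'^T)\cap\R^l_+ = \linspan(B')^{\bot}\cap\R^l_+$ requires no work: it is just the standard identity $\kernel(B'^T) = \image(B')^{\bot} = \linspan(B')^{\bot}$ from the fundamental theorem of linear algebra, intersected with $\R^l_+$. The substance of the lemma is the equivalence
\[
  \linspan(B')^{\bot}\cap\R^l_+ = \{0\} \iff \linspan(B')\cap\R^l_{++}\neq\emptyset ,
\]
which I recognize as Gordan's theorem of the alternative applied to the matrix $B'$: writing $v = B'u$, the right-hand side asserts solvability of $B'u$ strictly positive, while the left-hand side is exactly the negation of solvability of $B'^T z = 0$ with $z\ge 0$, $z\neq 0$. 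One may simply cite Gordan, but I prefer a short self-contained separation argument, splitting into the two implications.

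The implication $(\Leftarrow)$ is immediate: if $v = B'u\in\R^l_{++}$ and $z\in\linspan(B')^{\bot}\cap\R^l_+$, then $0 = z^T v = \sum_i z_i v_i$ is a sum of nonnegative terms with every $v_i>0$, which forces $z=0$. For $(\Rightarrow)$ I argue contrapositively: assuming $\linspan(B')\cap\R^l_{++}=\emptyset$, I separate the subspace $V:=\linspan(B')$ from the open convex set $\R^l_{++}$ by a hyperplane, obtaining $z\neq 0$ and $\alpha$ with $z^T v\le\alpha\le z^T w$ for all $v\in V$ and $w\in\R^l_{++}$. Since $V$ is a linear subspace, $z^T v\le\alpha$ for all $v\in V$ forces $z\in V^{\bot}$ (otherwise scaling some $v$ makes $z^T v$ unbounded), whence $\alpha\ge 0$; the bound $\alpha\le z^T w$ over the open orthant then forces $z_i\ge 0$ for every $i$, since a negative $z_i$ would let $z^T w\to-\infty$ as $w_i\to+\infty$. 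Hence $z\in\linspan(B')^{\bot}\cap\R^l_+$ is nonzero. The main (mild) obstacle is precisely pinning down these sign conditions — that the subspace structure collapses the separating constant and that a nonnegative infimum over the open orthant forces $z\ge 0$.

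For the clause ``in this case $\pipb=\R^d$'', I use a point $u_0$ with $B'u_0\in\R^l_{++}$, which exists exactly when the equivalent conditions hold. Given an arbitrary $x\in\R^d$, set $c:=b+Bx$ and choose $t>0$ large enough that $c + t\,B'u_0\ge 0$ coordinatewise; this is possible because every entry of $B'u_0$ is strictly positive, so any $t\ge\max_i (-c_i)/(B'u_0)_i$ works. Then $y':=t\,u_0$ witnesses $x\in\pipb$, and since $x$ was arbitrary, $\pipb=\R^d$.

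Finally, the ``in particular'' statement is the contrapositive of the two preceding paragraphs: if $\kernel(B'^T)\cap\R^l_+=\{0\}$, then $\pipb=\R^d$, which is unbounded for $d\ge 1$; hence a bounded $\pipb$ forces $\kernel(B'^T)\cap\R^l_+=\linspan(B')^{\bot}\cap\R^l_+\neq\{0\}$.
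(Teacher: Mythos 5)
Your proof is correct and takes essentially the same route as the paper: the $t$-scaling argument showing $\pipb=\R^d$ from a strictly positive element $B'u_0$, and the contrapositive for bounded $\pipb$, match the paper's proof verbatim. The only difference is that the paper dismisses the equivalence $\linspan(B')^{\bot}\cap\R^l_+=\{0\}\iff\linspan(B')\cap\R^l_{++}\neq\emptyset$ as ``easy to see,'' whereas you correctly identify it as Gordan's theorem and supply a complete separating-hyperplane proof (subspace forces $z\in V^{\bot}$, open orthant forces $z\ge 0$) --- a filled-in detail rather than a different approach.
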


\begin{proof}
The equivalence 
$\kernel(B'^T)\cap\R^l_+ = \linspan (B')^{\bot}\cap\R^l_+ = \{0\} \iff 
\linspan (B')\cap\R^l_{++}\neq\emptyset$ is easy to see. 
If so, then there exists $y'\in\R^n$ such that $B' y' > 0$. Thus, for every 
$x\in\R^d$, there exists $t>0$ sufficiently large such that 
$b+Bx+B'(ty')\geq 0$. This implies $\pipb=\R^d$.
Thus for bounded $\pipb$ we have $\kernel(B'^T)\cap\R^l_+ \neq\{0\}$.
\end{proof}

Before proving Theorem~\ref{thm:pihpih}, we observe that neither the
implication
``$\linspan (B')\cap\R^l_{++}\neq\emptyset \Longrightarrow \pipb=\R^d$''
nor the implication 
``$\pipb$ is bounded $\Longrightarrow \kernel(B'^T)\cap\R^l_+ \neq\{0\}$'' 
in Lemma~\ref{lem:kernel} is an equivalence. 
Example~\ref{ex:proj1} also shows that the precondition in part (2) of 
Theorem~\ref{thm:pihpih} cannot be dropped.

\begin{ex}\label{ex:proj1}
\ 

  (1)
Consider the polyhedron 
\[
  P_1 = \left\{\begin{pmatrix} x \\ y \end{pmatrix} \in\R^2\ |\
  \begin{pmatrix} 1 \\ 1 \end{pmatrix} + \begin{pmatrix} -1 \\ 1 \end{pmatrix} 
  x + \begin{pmatrix} 1 \\ 1 \end{pmatrix} y \geq 0 \right\} .
\]
$P_1$ is a pointed polyhedral cone containing the origin in its interior; see
Figure~\ref{fig:proj-ex1} (A).
We have $\linspan (B')\cap\R^2_{++}\neq\emptyset$ and thus the intersection 
of $\kernel(B'^T) = \kernel(1,1)$ and the nonnegative real numbers is 
zero-dimensional, i.e., $\kernel(B'^T)\cap\R^2_+=\{0\}$.
Moreover, in this case, the restriction to the $1$-simplex as in part (2) of
Theorem~\ref{thm:pihpih} is not possible.

\begin{figure}
\centering
\begin{subfigure}{.35\textwidth}
  \centering
  \includegraphics[width=.75\linewidth]{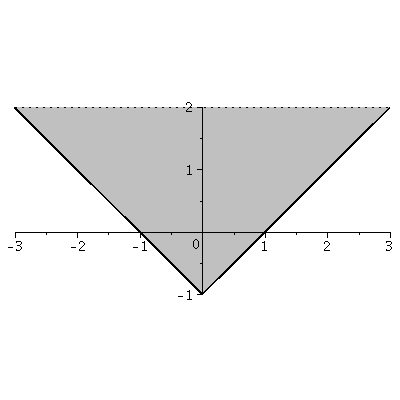}
  \caption{$P_1$ as defined in Example~\ref{ex:proj1}.}
\end{subfigure} \hspace*{.1\textwidth}
\begin{subfigure}{0.35\textwidth}
  \centering
  \includegraphics[width=.75\linewidth]{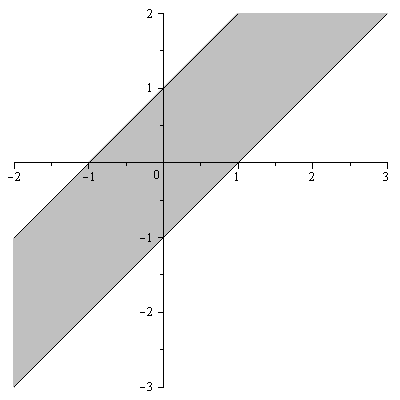}
  \caption{$P_2$ as defined in Example~\ref{ex:proj1}.}
\end{subfigure}
\caption{}  
\label{fig:proj-ex1}
\end{figure}

  (2)
Consider the polyhedron
\[
  P_2 = \left\{\begin{pmatrix} x \\ y \end{pmatrix} \in\R^2\ |\
  \begin{pmatrix} 1 \\ 1 \end{pmatrix} 
  + \begin{pmatrix} 1 \\ -1 \end{pmatrix} x 
  + \begin{pmatrix} -1 \\ 1 \end{pmatrix} y \geq 0 \right\} ,
\]
which is unbounded and contains the origin in its interior; see
Figure~\ref{fig:proj-ex1} (B).
We have $\linspan(B')\cap\R^2_{++}=\emptyset$ and thus 
$\kernel(B'^T)\cap\R^2_+ \neq\{0\}$. Indeed, for every $t\geq 0$, we have 
$(0,t)\in\kernel(B'^T)\cap\R^2_+$.
On the other hand, $\pipb=\R$ shows that the reverse of the other (and above
mentioned) implications in Lemma~\ref{lem:kernel} are not equivalences.
\end{ex}

\begin{proof}[Proof of Theorem~\ref{thm:pihpih}]
\ 

(1):
$\pipa\not\subseteq\pipb$ if and only if there exists a point 
$x\in\pipa\bs\pipb$, i.e., for $x\in\pipa$ there exists no $y'\in\R^n$
with $b+Bx+B'y'\geq 0$.
By Farkas' Lemma~\cite[Proposition 1.7]{Ziegler1995} this is equivalent to the
existence of a point $z\in\R^l_+$ with $z^T B' = 0$ such that $z^T (b+Bx)<0$
holds.
Equivalently, there exists $(x,z)\in \pipa\times(\kernel(B'^T)\cap\R^l_+)$
such that $ z^T (b+Bx) < 0 $. 

(2):
If there exists
$(x,z)\in\pipa\times\left(\kernel(B'^T)\cap\R^l_+\right)$ such that 
$z^T (b+Bx)<0$, then $z\neq 0$ and thus $z'^T (b+Bx) < 0$ for
$z'=\frac{z}{|z|}\geq 0$ with 
$|z'|=\sum_{i=1}^l z'_i = \frac{1}{|z|} \sum_{i=1}^l z'_i = 1$.

Assume $z^T (b+Bx) \geq 0$ holds for all
$(x,z)\in\pipa\times\left(\kernel(B'^T)\cap\R^l_+\right)$. 
By assumption, there exists $0\neq z\in\kernel(B'^T)\cap\R^l_+$. 
Applying the same scaling as above yields $z^T (b+Bx) \geq 0$ for every
$z\in\Delta^l$, implying the claim.
\end{proof}

Note that $\kernel(B'^T)\cap\Delta^l_+$ is a polytope and is intrinsically
linked to the polar of $\pipb$. Namely, it is the set of convex combinations
of the columns in $B'^T$ that are equal to the origin, i.e., $0=B'^T z$ with
$1=\mathds{1}_l^T z$ and $z\geq 0$.

Consider the optimization version of Theorem~\ref{thm:pihpih}
\begin{align}
\begin{split} \label{eq:pihpih}
  \inf\ &\ z^T(b+Bx) \\
   \st\ &\ (x,y,z)\in P_A\times \left( \kernel (B'^T)\cap\Delta^l_+ \right) .
\end{split}
\end{align}
Assuming nonemptyness of $\kernel (B'^T)\cap\Delta^l_+$,
Theorem~\ref{thm:pihpih} implies that $\pipa\subseteq\pipb$ if and only if the
infimum is nonnegative.

Replacing the nonnegativity constraints in~\eqref{eq:pihpih} by sos
constraints results in a hierarchy of SDFPs to decide the $\pih$-in-$\pih$
containment problem,
\begin{align}
\begin{split} \label{eq:pihpihopt}
  \mu(t) := \sup\ &\ \mu \\
   \st\ &\ z^T(b+Bx) -\mu \in M^1 + I ,\\
\end{split}
\end{align}
where $M^1$ and $I$ denote the quadratic module generated by the inequality
constraints and the ideal generated by the equality constraints, respectively.

Under the assumptions in part (2) of Theorem~\ref{thm:pihpih}, applying
Putinar's Positivstellensatz, Corollary~\ref{cor:putinar}, to
problem~\eqref{eq:pihpihopt} (also allowing equality constraints), the
sequence $\mu(t)$ convergences asymptotically to the optimal value
of~\eqref{eq:pihpih} for $t\to\infty$.

\section{A Bilinear Formulation of the
\texorpdfstring{$\pis$}{piS}-in-\texorpdfstring{$\pis$}{piS} Containment
Problem} \label{sec:pispis}

Throughout the section, let 
\begin{align*}
  A(x,y) &= A_0 + \sum_{i=1}^d A_i x_i + \sum_{j=1}^m A'_j y_j \in\sym^k[x,y]
  \\ \text{ and }\quad 
  B(x,y') &= B_0 +\sum_{i=1}^d B_i x_i + \sum_{j=1}^n B'_j y'_j \in\sym^l[x,y']
\end{align*}
be linear pencils with $y=(y_1,\ldots,y_m)$ and $y'=(y'_1,\ldots,y'_n)$ for 
$n\geq 1$. Denote the projection of the corresponding spectrahedra onto the 
$x$-variables by 
\begin{align*}
  \pisa &= \left\{x\in\R^d\ |\ \exists y\in\R^m:\ A(x,y) \succeq 0 \right\}\\ 
  \text{and}\quad
  \pisb &= \left\{x\in\R^d\ |\ \exists y'\in\R^n:\ B(x,y')\succeq 0 \right\} .
\end{align*}
Recall from Section~\ref{sec:prelim} that the projection of a spectrahedron is 
not necessarily closed and thus, in general, not a spectrahedron itself.

Define $\bar{\Bcal} = \linspan\{B'_{1},\ldots,B'_{n}\}$ and recall the
equivalence
\[
  \langle B'_{i},Z \rangle = 0\ \forall i\in [n] \iff Z\in\bar{\Bcal}^{\bot} .
\]

The $\pis$-in-$\pis$ containment problem is slightly more involved than the
$\pih$-in-$\pih$ problem as the projection of a spectrahedron fails to be
closed in general. 
We state an extension of Theorem~\ref{thm:pihpih} to the $\pis$-in-$\pis$
containment problem.

\begin{thm}\label{thm:pispis}
Let $A(x,y)\in\sym^k[x,y]$ and $B(x,y')\in\sym^l[x,y']$ be linear pencils
such that $\pisa\neq\emptyset$. 
\begin{enumerate}
  \item
  $\pisa\subseteq \cl\pisb$ if and only if
  $\langle B(x,0), Z \rangle\geq 0$ on 
  $\pisa\times ( \bar{\Bcal}^{\bot}\cap\psd^l )$.
  \item
  Assume that the condition 
  $\sum_{i=1}^{n} B'_i y'_i \succeq 0 \Longrightarrow 
  \sum_{i=1}^{n} B'_i y'_i = 0$ holds for all $y'$.
  Then the closure in part (1) can be dropped.
\end{enumerate}
\end{thm}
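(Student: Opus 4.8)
The plan is to reduce this to an $\HH$-in-$\HH$-style duality argument by dualizing the projection of the outer spectrahedron, mirroring the proof of Theorem~\ref{thm:pihpih} but with the positive orthant $\R^l_+$ replaced by $\psd^l$ and Farkas' Lemma replaced by the conic (semidefinite) separation/duality between $\psd^l$ and itself.

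\begin{proof}[Proof sketch]
\textbf{Part (1).}
First I would observe that for fixed $x\in\R^d$, the point $x$ fails to lie in $\cl\pisb$ precisely when $B(x,y')\succeq 0$ has \emph{no} solution $y'$ even approximately, i.e.\ when the spectrahedral feasibility system $\{y'\ |\ B_0+\sum B_i x_i + \sum B'_j y'_j \succeq 0\}$ is infeasible in a strong (stably infeasible) sense. The natural tool is conic duality over the self-dual cone $\psd^l$: the system $B(x,0)+\sum_j y'_j B'_j \succeq 0$ is (weakly) infeasible exactly when there exists $Z\in\psd^l$ with $\langle B'_j, Z\rangle = 0$ for all $j\in[n]$ (that is, $Z\in\bar{\Bcal}^{\bot}\cap\psd^l$) and $\langle B(x,0), Z\rangle < 0$. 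The two conditions $\langle B'_j, Z\rangle=0\ \forall j$ and $Z\succeq 0$ are exactly the membership $Z\in\bar{\Bcal}^{\bot}\cap\psd^l$, using the stated equivalence $\langle B'_i,Z\rangle=0\ \forall i \iff Z\in\bar{\Bcal}^{\bot}$. Thus the existence of a separating $Z$ for some $x\in\pisa$ is equivalent to $\pisa\not\subseteq\cl\pisb$, which gives the contrapositive of the claimed equivalence. The closure appears on the left-hand side precisely because semidefinite feasibility, unlike the polyhedral case, can have a nonzero duality gap: the dual certificate $Z$ detects membership in the closure $\cl\pisb$ rather than in $\pisb$ itself.

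\textbf{Part (2).}
Under the added hypothesis $\sum_i B'_i y'_i \succeq 0 \Rightarrow \sum_i B'_i y'_i = 0$, I would argue that the projection $\pisb$ is already closed, so that $\cl\pisb = \pisb$ and the closure in part (1) may be dropped. The hypothesis says that the recession directions of the fibers of $B$ in the $y'$-variables contribute nothing positive semidefinite except $0$; geometrically it rules out the ``feasibility only in the limit'' phenomenon that causes $\pisb$ to fail to be closed. Concretely I would show that this condition forces the linear map $y'\mapsto \sum_j y'_j B'_j$ to have image meeting $\psd^l$ only at the origin, which is a Slater/closedness-type regularity ensuring the projection of the spectrahedron along $y'$ is a closed set (this is the standard criterion that a linear image of $\psd^l$ is closed when the preimage of $\psd^l$ under the pencil's homogeneous part is trivial). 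Once $\pisb$ is closed, the statement is immediate from part (1).

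\textbf{Main obstacle.}
The hard part will be the careful handling of the conic duality gap in part (1): semidefinite systems, unlike linear ones, need not satisfy strong alternative theorems, so I must be precise that the dual certificate $Z\in\bar{\Bcal}^{\bot}\cap\psd^l$ certifies exclusion from the \emph{closure} and not merely from $\pisb$. I expect to invoke the standard characterization of $\cl\pisb$ as the set of $x$ for which $B(x,0)$ lies in the closure of the cone $\psd^l + \bar{\Bcal}$ (equivalently, is not strictly separated from it by an element of $\bar{\Bcal}^{\bot}\cap\psd^l$), and in part (2) the real work is verifying that the stated hypothesis is exactly the condition under which $\psd^l + \bar{\Bcal}$ is already closed, so that no gap remains.
\end{proof}
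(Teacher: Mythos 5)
Your overall strategy is the same as the paper's: the ``only if'' direction via weak duality/continuity (the paper does this with an approximating sequence $(x_i,y_i')\subseteq S_B$, using $Z\in\bar{\Bcal}^{\bot}$ to kill the $y'$-terms), and the ``if'' direction via the semidefinite Farkas alternative, which the paper invokes as Lemma~\ref{lem:farkas_cone_2}: absence of a certificate $Z\in\bar{\Bcal}^{\bot}\cap\psd^l$ with $\langle B(x,0),Z\rangle<0$ yields, for each fixed $x\in\pisa$, matrices $B_0''$ arbitrarily close to $B(x,0)$ with $B_0''+\sum_i y_i'B_i'\succeq 0$ solvable. In part (2) you take a mildly different route: you deduce from $\bar{\Bcal}\cap\psd^l=\{0\}$ that the cone $\psd^l+\bar{\Bcal}$ is closed (a correct Rockafellar-type fact, since $\psd^l\cap(-\bar{\Bcal})=\{0\}$), hence $\pisb$ is closed, whereas the paper applies the pointwise strong alternative (Lemma~\ref{lem:boyd}) at each fixed $x\in\pisa$ to get exact feasibility $B(x,y')\succeq 0$ directly. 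These are two faces of the same mechanism, and your version has the small bonus of exhibiting $\pisb$ as a closed set; note only that the hypothesis is \emph{sufficient}, not ``exactly the condition,'' for closedness of $\psd^l+\bar{\Bcal}$.

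There is, however, a genuine gap in your part (1), and it sits precisely at the step you label a ``standard characterization'': it is \emph{not} true in general that $\cl\pisb=\{x\ |\ B(x,0)\in\cl(\psd^l+\bar{\Bcal})\}$. The Farkas alternative perturbs the constant matrix inside $\sym^l$, while $\cl\pisb$ is a closure in $x$-space, and a perturbation $B_0''$ need not be of the form $B(x',0)$ for $x'$ near $x$. Concretely, for $B(x,y')=\left[\begin{smallmatrix}0 & x\\ x & y'\end{smallmatrix}\right]$ one has $\pisb=\{0\}=\cl\pisb$ (a PSD matrix with vanishing diagonal entry has vanishing row), yet $\bar{\Bcal}^{\bot}\cap\psd^2=\{tE_{11}\ |\ t\geq 0\}$ gives $\langle B(x,0),Z\rangle\equiv 0\geq 0$ for \emph{every} $x\in\R$; so with, say, $\pisa=[-1,1]$ the dual condition holds on all of $\pisa\times(\bar{\Bcal}^{\bot}\cap\psd^2)$ although $\pisa\not\subseteq\cl\pisb$. (The $Z$-certificate characterizes exclusion from $\cl(\psd^l+\bar{\Bcal})$, i.e.\ stable infeasibility; weakly infeasible systems, as in this example, admit no certificate --- your terminology inverts this.) You should be aware that the paper's printed proof makes the same silent jump, in the sentence ``By letting $\varepsilon$ tend to zero \ldots the claim follows'': it establishes $\lim_{\varepsilon\to 0}\dist(B(x,y_{\varepsilon}'),\psd^l)=0$ at fixed $x$, which is membership of $B(x,0)$ in $\cl(\psd^l+\bar{\Bcal})$, not $x\in\cl\pisb$. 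So you have reproduced the paper's argument including its weak point rather than repaired it. Your part (2) is immune to this, provided you argue directly from the bipolar identity $(\bar{\Bcal}^{\bot}\cap\psd^l)^{\ast}=\cl(\psd^l+\bar{\Bcal})=\psd^l+\bar{\Bcal}$, which under the constraint qualification gives $x\in\pisb$ exactly, rather than by citing the unrepaired part (1); note also that in the counterexample above the part (2) hypothesis fails, consistent with this diagnosis.
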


As in the $\pih$-in-$\pih$ problem, the projection variables $y'$ of the outer
spectrahedron do not appear in the polynomial formulation, only the
corresponding coefficient matrices.

We use the following Farkas type lemmas to prove Theorem~\ref{thm:pispis}.

\begin{lemma}[{\cite[Theorem 2.22]{tuncel2010}}] \label{lem:farkas_cone_2}
Let $A(x)\in\sym^k[x] $ be a linear pencil and denote by 
$\tilde{A}(x)=\sum_{i=1}^d x_i A_i $ the pure-linear part. 
Then exactly one of the following two systems has a solution.
\begin{align}
  \forall \varepsilon > 0\ \exists A'_0 \in\sym^k,\ \exists x \in\R^d &:\ 
  \| A_0 - A'_0 \| < \varepsilon,\ A'_0 + \tilde{A}(x) \in\psd^k \\
  \exists Z \in\sym^k &:\ Z\succeq 0,\ \left\langle A_i,Z\right\rangle = 0\  
  \forall i\in [d] ,\ \left\langle A_0 , Z \right\rangle < 0
\label{eq:farkas2-2}
\end{align}
\end{lemma}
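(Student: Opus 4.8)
The plan is to recognize the alternative as the bipolar (closed-cone duality) theorem applied to a single convex cone. Set $L := \lin\{A_1,\ldots,A_d\}\subseteq\sym^k$ and consider the convex cone
\[
  K := \psd^k + L = \{\,S + M\ |\ S\in\psd^k,\ M\in L\,\} \subseteq \sym^k ,
\]
the Minkowski sum of the positive semidefinite cone and the linear span of the pure-linear coefficient matrices. Since $L$ is a subspace, $K$ is indeed a convex cone. The first step is to translate each of the two systems into a geometric statement about $A_0$ relative to $K$, and then to invoke the identity $\cl K = (K^*)^*$ for convex cones.

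First I would rewrite the first system. As $x$ ranges over $\R^d$, $-\tilde A(x) = -\sum_i x_i A_i$ ranges over all of $L$, so the condition ``$\exists x:\ A_0' + \tilde A(x)\in\psd^k$'' is equivalent to $A_0'\in\psd^k + L = K$. Hence the first system, which asks that for every $\varepsilon>0$ there be some $A_0'\in K$ with $\|A_0 - A_0'\| < \varepsilon$, says precisely that $A_0\in\cl K$. Next I would compute the dual cone $K^* = \{Z\in\sym^k\ |\ \langle M,Z\rangle\geq 0\ \forall M\in K\}$. Using the self-duality of $\psd^k$ together with the standard identity $(C + L)^* = C^* \cap L^{\perp}$ for a convex cone $C$ and a subspace $L$ (the subspace forces $\langle\,\cdot\,,Z\rangle$ to vanish on $L$), one obtains
\[
  K^* = \psd^k \cap L^{\perp} = \{\,Z\in\psd^k\ |\ \langle A_i,Z\rangle = 0\ \forall i\in[d]\,\} .
\]
Thus the second system, asking for $Z\in\psd^k$ with $\langle A_i,Z\rangle = 0$ for all $i$ and $\langle A_0,Z\rangle < 0$, says precisely that there is $Z\in K^*$ with $\langle A_0,Z\rangle < 0$.

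With both translations in hand, the conclusion follows from the bipolar theorem: since $K$ is a convex cone, $\cl K = (K^*)^* = \{\,M\ |\ \langle M,Z\rangle\geq 0\ \forall Z\in K^*\,\}$. Consequently $A_0\in\cl K$ holds if and only if $\langle A_0,Z\rangle\geq 0$ for every $Z\in K^*$, which is exactly the logical negation of ``there is $Z\in K^*$ with $\langle A_0,Z\rangle < 0$''. Hence exactly one of the two systems is solvable. Concretely, if $A_0\in\cl K$ then every $Z\in K^*$ satisfies $\langle A_0,Z\rangle\geq 0$, so \eqref{eq:farkas2-2} is infeasible; if $A_0\notin\cl K$, then, applying the separating hyperplane theorem to the point $A_0$ and the closed convex cone $\cl K$, one obtains $Z$ with $\langle M,Z\rangle\geq 0$ on $\cl K$ (so $Z\in(\cl K)^* = K^*$) and $\langle A_0,Z\rangle < 0$, making the first system infeasible.

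I expect the only delicate point to be the bookkeeping around closure. The cone $K = \psd^k + L$ need not be closed, since the sum of a closed cone and a subspace can fail to be closed; this is exactly why the first alternative is phrased with $\forall\varepsilon>0$, encoding membership in $\cl K$ rather than in $K$ itself. The main obstacle is therefore to match the $\varepsilon$-approximation of the first system cleanly to the bidual $\cl K = (K^*)^*$ and to verify that the dual cone is computed with respect to $K$ rather than $\cl K$; because $(\cl K)^* = K^*$, no information is lost in passing between the cone and its closure, and the dichotomy is clean.
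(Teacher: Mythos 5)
Your proof is correct. Note, though, that the paper itself offers no proof of this lemma: it is imported verbatim as \cite[Theorem 2.22]{tuncel2010}, so there is no internal argument to compare against. Your self-contained derivation is the standard one for semidefinite Farkas lemmas, and it is essentially the argument behind the cited result: setting $K=\psd^k+\linspan\{A_1,\ldots,A_d\}$, the translation of the first system into $A_0\in\cl K$ is exact (since $-\tilde A(x)$ sweeps out the subspace), the dual-cone computation $K^*=\psd^k\cap\linspan\{A_1,\ldots,A_d\}^{\perp}$ correctly uses self-duality of $\psd^k$ and $(C+L)^*=C^*\cap L^{\perp}$, and the dichotomy then follows from the bipolar theorem plus strict separation of $A_0$ from the closed cone $\cl K$ (with $c<0$ forced by $0\in\cl K$ and $\langle M,Z\rangle\ge 0$ forced by positive scaling). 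You also correctly identify the one delicate point: $\psd^k+L$ need not be closed, which is precisely why the first alternative carries the $\forall\varepsilon>0$ quantifier, and why $(\cl K)^*=K^*$ makes the bookkeeping harmless. This non-closedness is, incidentally, exactly the phenomenon the paper is managing when it invokes the lemma in Theorem~\ref{thm:pispis}(1), where the conclusion only reaches $\cl\pisb$ unless the constraint qualification of Lemma~\ref{lem:boyd} holds.
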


The Farkas type lemmas for cones (and thus the theory of semidefinite
programming) lack in the fact that the linear image of the cone of positive
semidefinite matrices is not closed in general. 
Additional conditions which lead to more clean formulations are called
\emph{constraint qualification}.

\begin{lemma}[{\cite[Example 5.14]{Boyd2004}}] \label{lem:boyd}
Let $A(x)\in\sym^k[x]$ be a linear pencil. Assume
\[
  \sum_{i=1}^d A_i x_i \succeq 0 \Longrightarrow \sum_{i=1}^d A_i x_i = 0
\]
holds for any $x$.
Then either \eqref{eq:farkas2-2} has a solution or $S_A$ is nonempty.

If $A_1,\ldots,A_d$ are linearly independent, then the above condition can be
replaced by $\sum_{i=1}^d A_i x_i \succeq 0 \Longrightarrow x=0 $.
\end{lemma}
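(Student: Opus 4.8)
The plan is to recast the two alternatives of Lemma~\ref{lem:farkas_cone_2} in terms of a single convex cone and to show that the stated constraint qualification forces that cone to be closed; this upgrades the \emph{approximate} feasibility in the first alternative of Lemma~\ref{lem:farkas_cone_2} to exact feasibility of $S_A$. First I would set $\tilde{A}(x) = \sum_{i=1}^d A_i x_i$, write $L := \image\tilde{A} = \linspan\{A_1,\ldots,A_d\}$, and consider the convex cone
\[
  K := L + \psd^k = \left\{ \tilde{A}(x) + P\ |\ x\in\R^d,\ P\in\psd^k \right\} \subseteq \sym^k .
\]
A direct check shows $S_A\neq\emptyset \iff A_0\in K$, since $A_0+\tilde{A}(x)\succeq 0$ is the same as $A_0 = -\tilde{A}(x) + P$ with $P\succeq 0$ and $-\tilde{A}(x)$ ranges over the subspace $L$. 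Likewise, alternative~(1) of Lemma~\ref{lem:farkas_cone_2} says exactly $A_0\in\cl K$: there the matrices $A'_0$ range precisely over $K$ and are required to approximate $A_0$. Finally, \eqref{eq:farkas2-2} asserts a $Z\in\psd^k$ with $Z\perp L$ (from $\langle A_i,Z\rangle=0$) and $\langle A_0,Z\rangle<0$, which is a strict separation of $A_0$ from the closed convex cone $\cl K$, using that $\psd^k$ is self-dual and $L$ is a subspace.

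The decisive step is to prove that $K$ is closed under the hypothesis, which reads $\tilde{A}(x)\succeq 0 \Rightarrow \tilde{A}(x)=0$, i.e. $L\cap\psd^k=\{0\}$. I would take a convergent sequence $m_n = \ell_n + p_n \to m$ with $\ell_n\in L$ and $p_n\in\psd^k$. If $\{p_n\}$ is bounded, a subsequence gives $p_n\to p\in\psd^k$, and then $\ell_n = m_n - p_n \to m-p\in L$ since $L$ is closed, so $m\in K$. Otherwise, passing to a subsequence with $\|p_n\|\to\infty$ and normalizing $q_n := p_n/\|p_n\|\in\psd^k$, compactness of the unit sphere yields $q_n\to q\in\psd^k$ with $\|q\|=1$; dividing $m_n=\ell_n+p_n$ by $\|p_n\|$ and letting $n\to\infty$ gives $-q\in L$, whence $q\in L\cap\psd^k=\{0\}$, contradicting $\|q\|=1$. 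Hence $\{p_n\}$ must be bounded and $K$ is closed, so $\cl K = K$.

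Combining these, Lemma~\ref{lem:farkas_cone_2} guarantees that exactly one of its alternatives holds; under the constraint qualification, alternative~(1) becomes $A_0\in\cl K = K \iff S_A\neq\emptyset$, while alternative~(2) is \eqref{eq:farkas2-2}. Thus exactly one of ``$S_A\neq\emptyset$'' and ``\eqref{eq:farkas2-2} has a solution'' holds, which in particular yields the asserted dichotomy. For the last clause, if $A_1,\ldots,A_d$ are linearly independent then $\tilde{A}$ is injective, so $\tilde{A}(x)=0\iff x=0$, and the condition $\tilde{A}(x)\succeq 0\Rightarrow\tilde{A}(x)=0$ is then equivalent to $\tilde{A}(x)\succeq 0\Rightarrow x=0$.

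I expect the main obstacle to be the closedness argument, specifically handling the unbounded case and correctly exploiting that $L$ is a subspace (hence closed under negation and scaling); this is precisely where the constraint qualification $L\cap\psd^k=\{0\}$ is essential, and it is the geometric content that the general Farkas lemma for cones (Lemma~\ref{lem:farkas_cone_2}) lacks because the image of $\psd^k$ under a linear map need not be closed.
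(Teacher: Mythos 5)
Your proof is correct; note, though, that the paper itself contains no proof of this lemma --- it is imported verbatim from Boyd and Vandenberghe (Example 5.14), where it is derived from the general theory of strong alternatives via Lagrange duality for an auxiliary, strictly feasible problem. Your route is genuinely different and self-contained given material already in the paper: you identify the constraint qualification with $L\cap\psd^k=\{0\}$ for $L=\linspan\{A_1,\ldots,A_d\}=\image\tilde{A}$, prove that $K=L+\psd^k$ is then closed by the normalization/compactness argument, and upgrade alternative~(1) of Lemma~\ref{lem:farkas_cone_2} from $A_0\in\cl K$ to $A_0\in K$, which is exactly $S_A\neq\emptyset$ (the sign in $A_0=-\tilde{A}(x)+P$ is harmless since $L$ is a subspace, and your unbounded case correctly uses $L=-L$ to turn $-q\in L$ into $q\in L\cap\psd^k$). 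All steps check, including the final clause (injectivity of $\tilde{A}$ under linear independence), and you in fact prove the stronger exclusive-or dichotomy rather than the stated inclusive ``either/or''. What the two approaches buy: yours makes the geometric content of the constraint qualification transparent --- it is precisely the condition excluding nonzero recession directions of $\psd^k$ inside $L$, which is what forces the conic sum closed and repairs the failure of exactness that Lemma~\ref{lem:farkas_cone_2} works around with its $\varepsilon$-perturbations --- and it leans only on the approximate Farkas lemma the paper already cites; the Boyd--Vandenberghe argument is shorter if one already has the strong-alternatives/duality machinery, but hides the closedness phenomenon inside a duality attainment claim.
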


\begin{proof}[Proof of Theorem~\ref{thm:pispis}]
\ 

(1):
Assume $\pisa\subseteq\cl\pisb$. Let $x\in\pisa$.
Then there exists a sequence $(x_i,y'_i)_i \subseteq S_B$ such that
$\lim_{i\to\infty} x_i = x$.
For all $Z\in\bar{\Bcal}^{\bot}\cap\psd^l$ it holds that
\[
  \left\langle B(x,0),Z \right\rangle 
  = \lim_{i\to\infty} \left\langle B(x_i,y'_i),Z \right\rangle \geq 0 .
\]
Since $x\in\pisa$ is arbitrary, $\langle B(x,0),Z \rangle$ is nonnegative
on $\pisa\times(\bar{\Bcal}^{\bot}\cap\psd^l)$.

Assume $\left\langle B(x,0),Z \right\rangle\geq 0$ on
$\pisa\times(\bar{\Bcal}^{\bot}\cap\psd^l)$. 
Let $x\in\pisa$ be fixed but arbitrary and set $B'_0=B(x,0)$.
By Lemma~\ref{lem:farkas_cone_2}, there exist $B''_0 \in\sym^l$ and
$y'\in\R^n$ such that $B''_0 + \sum_{i=1}^n B'_i y'_i\in\psd^l$ and 
$\| B'_0 -B''_0 \| < \varepsilon$ for all $\varepsilon > 0$. By letting
$\varepsilon$ tend to zero, there exists a sequence
$(y'_{\varepsilon})_{\varepsilon}\subseteq\R^n$ such that
$\lim_{\varepsilon\to 0}B(x,y'_{\varepsilon})\succeq 0$.
As $x\in\pisa$ is arbitrary, the claim follows.

(2):
Assume $\left\langle B(x,0),Z \right\rangle\geq 0$ on
$\pisa\times(\bar{\Bcal}^{\bot}\cap\psd^l)$. 
Let $x\in\pisa$ be fixed but arbitrary. 
By Lemma~\ref{lem:boyd}, the spectrahedron 
$\{y'\in\R^n\ |\ B'_0 + \sum_{i=1}^n B'_{i} y'_i \succeq 0\}$ is nonempty. 
Thus there exists $y'\in\R^n$ such that $B(x,y')\succeq 0$. 
\end{proof}

Unfortunately, the if-part in Theorem~\ref{thm:pispis} (1) without taking the
closure is generally not true as the next example shows.

\begin{ex} \label{ex:pispispos}
Consider the linear pencil 
\[
  B(x,y') = \begin{bmatrix} -y'_1 & x & 0 \\ x & 1-y'_2 & 0 \\ 0 & 0 & -x+y'_2
\end{bmatrix}
  = \begin{bmatrix} 0 & x & 0 \\ x & 1 & 0 \\ 0 & 0 & -x \end{bmatrix}
  + y'_1 \begin{bmatrix} -1 & 0 & 0 \\ 0 & 0 & 0 \\ 0 & 0 & 0 \end{bmatrix}
  + y'_2 \begin{bmatrix} 0 & 0 & 0 \\ 0 & -1 & 0 \\ 0 & 0 & 1 \end{bmatrix} 
\]
and let $A(x)$ be the univariate linear pencil
\[
  A(x) = \begin{bmatrix} 1-x & 0 \\ 0 & 1+x \end{bmatrix}
  = \begin{bmatrix} 1 & 0 \\ 0 & 1 \end{bmatrix}
  + x \begin{bmatrix} -1 & 0 \\ 0 & 1 \end{bmatrix}
\]
describing the interval $S_A = [-1,1]$. By inspecting the principal minors of 
$B$, the spectrahedron $S_B$ has the form
$ \{(x,y')\in\R^3\ |\ y'_1\leq 0,\ x\leq y'_2\leq 1,\ 
  y'_1(1-y'_2)+x^2 \leq 0\} $.
For $x= 1$, the second condition implies $y'_2=1$ and thus the third condition 
reads as $x^2\leq 0$, a contradiction.
Thus $S_A \not\subseteq \pisb = (-\infty,1)$.

For every $Z\in\bar{\Bcal}^{\bot}\cap\psd^3$ it holds that
\begin{align*}
  0 = \left\langle Z,B'_1 \right\rangle = -Z_{11} \Longrightarrow Z_{12}=0,
  \quad 0 = \left\langle Z,B'_2 \right\rangle = Z_{33} - Z_{22} 
\end{align*}
implying 
$ \left\langle B(x,0),Z\right\rangle=Z_{22}+x(-Z_{33}+2Z_{12}) 
  =Z_{22}(1-x)\geq 0 $ for all $x\in S_A$. 

It should not be surprising that the constraint qualification on the pencil 
$B(x,y')$ is not satisfied. Indeed, for $(y'_1,y'_2)=(y'_1,0)$ with $y'_1<0$, 
\[
  B'_1 y'_1 + B'_2 y'_2 
  = \begin{bmatrix} -y'_1 & x & 0 \\ x & -y'_2 & 0 \\ 0 & 0 & -x+y'_2
\end{bmatrix}
  = \begin{bmatrix} -y'_1 & 0 & 0 \\ 0 & 0 & 0 \\ 0 & 0 & 0 \end{bmatrix}
\]
is positive semidefinite but not identically zero.
\end{ex}
%

An issue when considering the practical utility of Theorem~\ref{thm:pispis} is
the unboundedness of the set $\bar{\Bcal}^{\bot}\cap\psd^l$. 
Under an analog condition as in Theorem~\ref{thm:pihpih}, $\psd^l$ can be
replaced by the spectrahedral analog of the simplex. 

\begin{cor} \label{cor:pispis}
Let $A(x,y)\in\sym^k[x,y]$ and $B(x,y')\in\sym^l[x,y']$ be linear pencils
such that $\pisa\neq\emptyset$.
Assume $\bar{\Bcal}^{\bot}\cap\psd^l\neq\{0\}$.
Then $\pisa\subseteq\cl\pisb$ if and only if 
\[
  \langle B(x,0),Z \rangle\geq 0 \text{ on } 
  \pisa\times\left(\bar{\Bcal}^{\bot}\cap\T^l\right) ,
\]
where $\T^l = \{ Z\in\psd^l\ |\ \langle I_l,Z\rangle=1 \}$ is the
$l$-spectraplex.
\end{cor}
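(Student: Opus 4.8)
The plan is to deduce Corollary~\ref{cor:pispis} directly from Theorem~\ref{thm:pispis}(1) by a homogeneity/scaling argument, exactly mirroring the passage from part (1) to part (2) of Theorem~\ref{thm:pihpih}. The whole point is that the quantity $\langle B(x,0),Z\rangle$ is linear (hence positively homogeneous of degree one) in the variable $Z$, and that the constraint set $\bar{\Bcal}^{\bot}\cap\psd^l$ is a closed convex cone. So nonnegativity on the full cone should be equivalent to nonnegativity on any cross-section that meets every ray of the cone, and the spectraplex $\T^l=\{Z\in\psd^l \mid \langle I_l,Z\rangle=1\}$ is precisely such a cross-section provided the cone is nontrivial.

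First I would record the easy direction. Since $\bar{\Bcal}^{\bot}\cap\T^l \subseteq \bar{\Bcal}^{\bot}\cap\psd^l$, if $\langle B(x,0),Z\rangle\geq 0$ holds on all of $\pisa\times(\bar{\Bcal}^{\bot}\cap\psd^l)$ then in particular it holds on the smaller set $\pisa\times(\bar{\Bcal}^{\bot}\cap\T^l)$; combined with Theorem~\ref{thm:pispis}(1) this gives the forward implication. For the reverse implication I would argue contrapositively using the characterization from Theorem~\ref{thm:pispis}(1): suppose $\langle B(x,0),Z\rangle\geq 0$ fails somewhere on $\pisa\times(\bar{\Bcal}^{\bot}\cap\psd^l)$, say $\langle B(x,0),Z\rangle<0$ for some $x\in\pisa$ and some $0\neq Z\in\bar{\Bcal}^{\bot}\cap\psd^l$. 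The key observation is that such a $Z$ is nonzero and positive semidefinite, so $\langle I_l,Z\rangle=\tr(Z)>0$. Setting $Z':=Z/\langle I_l,Z\rangle$ gives $Z'\in\T^l$, and because $\bar{\Bcal}^{\bot}$ is a linear subspace it is closed under this positive scaling, so $Z'\in\bar{\Bcal}^{\bot}\cap\T^l$; by linearity in the second argument, $\langle B(x,0),Z'\rangle=\langle B(x,0),Z\rangle/\langle I_l,Z\rangle<0$. Hence the inequality also fails on $\pisa\times(\bar{\Bcal}^{\bot}\cap\T^l)$. Contraposing, nonnegativity on the spectraplex cross-section forces nonnegativity on the full cone, and another appeal to Theorem~\ref{thm:pispis}(1) yields $\pisa\subseteq\cl\pisb$.

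The one place where the hypothesis $\bar{\Bcal}^{\bot}\cap\psd^l\neq\{0\}$ is genuinely needed is to guarantee that the spectraplex cross-section is nonempty, so that the assumed inequality on $\pisa\times(\bar{\Bcal}^{\bot}\cap\T^l)$ is not vacuous; this is the analog of the precondition $\kernel(B'^T)\cap\R^l_+\neq\{0\}$ in Theorem~\ref{thm:pihpih}(2), and Example~\ref{ex:proj1} shows such a condition cannot be dropped. I do not expect a serious obstacle here: the only point requiring a moment's care is the strict positivity $\tr(Z)>0$ for a nonzero $Z\in\psd^l$, which is immediate since a positive semidefinite matrix with zero trace is the zero matrix. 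The scaling is legitimate precisely because $\langle B(x,0),\cdot\rangle$ is \emph{linear} (not merely affine) in $Z$, so the sign of the inner product is preserved under multiplication by the positive scalar $1/\tr(Z)$.
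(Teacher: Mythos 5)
Your proposal is correct and takes essentially the same route as the paper: the ``only if'' direction via the inclusion $\bar{\Bcal}^{\bot}\cap\T^l\subseteq\bar{\Bcal}^{\bot}\cap\psd^l$ together with Theorem~\ref{thm:pispis}(1), and the converse via the trace normalization $Z' = Z/\tr(Z)$ of a violating pair $(x,Z)$, using that a nonzero $Z\in\psd^l$ has $\tr(Z)=\langle I_l,Z\rangle>0$ and that $\langle B(x,0),\cdot\rangle$ is linear in $Z$. Your reading of the hypothesis $\bar{\Bcal}^{\bot}\cap\psd^l\neq\{0\}$ as ensuring the spectraplex cross-section is nonempty likewise matches the paper's use of it.
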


\begin{proof}
Since $\bar{\Bcal}^{\bot}\cap\T^l\subseteq\bar{\Bcal}^{\bot}\cap\psd^l$, the
``only if''-part follows from Theorem~\ref{thm:pispis}.

For the converse, first suppose there exists 
$(x,Z)\in\pisa\times(\bar{\Bcal}^{\bot}\cap\psd^l)$ such that 
$\langle B(x,0),Z\rangle < 0$. 
Then $0\neq Z\in\psd^l$ and thus $\tr(Z) = \langle I_l,Z\rangle > 0$. 
This implies $\langle B(x,0),Z'\rangle < 0$ for $Z' = \frac{Z}{\tr(Z)}$ with 
$\tr(Z')=\langle I_l,Z'\rangle = \frac{1}{\tr(Z)} \langle I_l,Z\rangle =1$.

Assume $\langle B(x,0),Z\rangle\geq 0$ on
$\pisa\times(\bar{\Bcal}^{\bot}\cap\psd^l)$.
By assumption, there exists 
$0\neq Z\in\bar{\Bcal}^{\bot}\cap\psd^l=\bar{\Bcal}^{\bot}\cap\psd^l$. 
Applying the above scaling, the claim follows.
\end{proof}

We state an analogue to Lemma~\ref{lem:kernel}. As the proof is very similar,
we skip it here. 

\begin{lemma}
We have $\bar{\Bcal}^{\bot}\cap\psd^l = \{0\}$ if and only if
$\mathcal{B}\cap\pd^l\neq\emptyset$. In this case, $\pisb=\R^d$.

In particular, if $\pisb$ is bounded, then
$\bar{\Bcal}^{\bot}\cap\psd^l\neq\{0\}$.
\end{lemma}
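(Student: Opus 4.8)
The statement is the positive-semidefinite analogue of Lemma~\ref{lem:kernel}, and the plan is to mirror that proof, replacing Stiemke's theorem for the orthant $\R^l_+$ by the corresponding theorem of the alternative for the self-dual cone $\psd^l$. Here $\bar{\Bcal}=\linspan\{B'_1,\ldots,B'_n\}$ plays the role of $\linspan(B')$, and orthogonality is taken with respect to the trace inner product $\langle X,Y\rangle=\tr(XY)$, under which $\psd^l$ is self-dual with $\inter\psd^l=\pd^l$. Concretely, I would establish that exactly one of the two conditions
\[
  \text{(I)}\ \bar{\Bcal}\cap\pd^l\neq\emptyset
  \qquad\text{and}\qquad
  \text{(II)}\ \bar{\Bcal}^{\bot}\cap\psd^l\neq\{0\}
\]
holds; the claimed equivalence $\bar{\Bcal}^{\bot}\cap\psd^l=\{0\}\iff\bar{\Bcal}\cap\pd^l\neq\emptyset$ is then exactly the equivalence of (I) with the negation of (II).

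For the implication that (I) and (II) cannot hold simultaneously, I would suppose $N\in\bar{\Bcal}\cap\pd^l$ and $0\neq Z\in\bar{\Bcal}^{\bot}\cap\psd^l$. Since $N\in\bar{\Bcal}$ and $Z\in\bar{\Bcal}^{\bot}$ we have $\langle N,Z\rangle=0$, while $N\succ0$ and $0\neq Z\succeq0$ force $\tr(NZ)>0$, a contradiction. For the converse I would argue by contraposition: assuming (I) fails, the subspace $\bar{\Bcal}$ is disjoint from the open convex cone $\pd^l$, so the separating hyperplane theorem yields a nonzero $Z\in\sym^l$ separating $\bar{\Bcal}$ from $\pd^l$. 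Because $\bar{\Bcal}$ is a linear subspace the separating functional must vanish on it, giving $Z\in\bar{\Bcal}^{\bot}$; after choosing the sign of $Z$ so that $\langle Z,\cdot\rangle\geq0$ on $\pd^l$, continuity extends this to $\psd^l$, and self-duality gives $Z\succeq0$. Hence $0\neq Z\in\bar{\Bcal}^{\bot}\cap\psd^l$, i.e. (II) holds.

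To prove the second assertion, I would suppose (I) holds and pick scalars $t_1,\ldots,t_n$ with $N:=\sum_{j=1}^n t_j B'_j\succ0$. For an arbitrary $x\in\R^d$ write $M(x):=B_0+\sum_{i=1}^d B_i x_i$ and take $y'=s\,(t_1,\ldots,t_n)$ with $s>0$, so that $B(x,y')=M(x)+sN$. The eigenvalue estimate $\lambda_{\min}(M(x)+sN)\geq\lambda_{\min}(M(x))+s\,\lambda_{\min}(N)$ together with $\lambda_{\min}(N)>0$ shows $B(x,y')\succ0$ for all sufficiently large $s$, whence $x\in\pisb$. As $x$ was arbitrary, $\pisb=\R^d$. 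The final ``in particular'' clause is then immediate: if $\pisb$ is bounded (and $d\geq1$) it cannot equal $\R^d$, so (I) must fail, and by the equivalence just shown $\bar{\Bcal}^{\bot}\cap\psd^l\neq\{0\}$.

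I expect the only genuine obstacle to be the reverse direction of the alternative, namely correctly extracting a positive semidefinite multiplier $Z$ from the separation of $\bar{\Bcal}$ and $\pd^l$; this is exactly where self-duality of $\psd^l$ and the identification $\inter\psd^l=\pd^l$ enter, and where the argument differs substantively from the polyhedral Stiemke theorem used in Lemma~\ref{lem:kernel}. Alternatively one could bypass the explicit separation by invoking the semidefinite Farkas-type Lemma~\ref{lem:farkas_cone_2} applied to a suitable pencil, but the direct separation argument is cleaner and matches the structure of the proof of Lemma~\ref{lem:kernel}.
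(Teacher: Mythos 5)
Your proof is correct and takes exactly the route the paper intends: the paper skips this proof entirely, remarking only that it is ``very similar'' to Lemma~\ref{lem:kernel}, and your argument is precisely that polyhedral proof transported to the semidefinite setting --- the Stiemke-type alternative replaced by separating the subspace $\bar{\Bcal}$ from the open cone $\pd^l$ and invoking self-duality of $\psd^l$, and the scaling $b+Bx+B'(ty')\geq 0$ replaced by the Weyl bound $\lambda_{\min}(M(x)+sN)\geq\lambda_{\min}(M(x))+s\,\lambda_{\min}(N)$. You in fact supply the separation details the paper leaves implicit, so there is nothing to correct.
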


Restricting the $\pis$-in-$\pis$ containment problem to the special case 
$\pis$-in-$\pih$ allows to state improved versions of Theorem~\ref{thm:pispis}
and Corollary~\ref{cor:pispis}.

\begin{prop}
Let $\pipb$ be as in~\eqref{eq:projpoly} and let $A(x,y)\in\sym^l[x,y]$ be a
linear pencil. 
\begin{enumerate}
  \item
  $\pisa\subseteq\pipb$ if and only if $z^T (b+Bx)\geq 0$ on 
  $\pisa\times(\kernel(B'^T)\cap\R^l_+)$.
  \item
  Assume $\kernel(B'^T)\cap\R^l_+\neq\{0\}$.
  Then $\pisa\subseteq\pipb$ if and only if $z^T (b+Bx)\geq 0$ on 
  $\pisa\times(\kernel(B'^T)\cap\Delta^l)$.
\end{enumerate}
\end{prop}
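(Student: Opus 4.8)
The plan is to mirror the proof of Theorem~\ref{thm:pihpih} almost verbatim, the key observation being that the inner set plays no role other than serving as the range of the point $x$: whether $x$ lies in the outer set $\pipb$ is a purely linear feasibility question in the projection variable $y'$, and this question is completely insensitive to whether $x$ is drawn from a projected polyhedron or a projected spectrahedron. Consequently the spectrahedral structure of $\pisa$ never enters the argument, and the entire proof reduces to the Farkas duality already used for the $\pih$-in-$\pih$ case.

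Concretely, for part (1) I would fix $x\in\R^d$ and note that $x\in\pipb$ holds if and only if the linear system $b+Bx+B'y'\geq 0$ in the unknown $y'$ is solvable. By Farkas' Lemma \cite[Proposition 1.7]{Ziegler1995}, infeasibility of this system is equivalent to the existence of $z\in\R^l_+$ with $z^T B'=0$, i.e.\ $z\in\kernel(B'^T)\cap\R^l_+$, satisfying $z^T(b+Bx)<0$. Quantifying $x$ over $\pisa$, this shows that $\pisa\not\subseteq\pipb$ holds precisely when there is a pair $(x,z)\in\pisa\times(\kernel(B'^T)\cap\R^l_+)$ with $z^T(b+Bx)<0$, which is exactly the negation of the asserted criterion. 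Here the non-closedness of $\pisa$ is harmless, since containment is tested pointwise on $\pisa$ itself; and the outer polyhedron $\pipb$ being closed with an \emph{exact} Farkas certificate is precisely what spares us the closure operations that were unavoidable in Theorem~\ref{thm:pispis}.

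For part (2) I would exploit that $z^T(b+Bx)$ is homogeneous of degree one in $z$ while $\kernel(B'^T)\cap\R^l_+$ is a cone. The inclusion $\kernel(B'^T)\cap\Delta^l\subseteq\kernel(B'^T)\cap\R^l_+$ gives one direction immediately. For the converse, if some $(x,z)\in\pisa\times(\kernel(B'^T)\cap\R^l_+)$ violates nonnegativity, then $z\neq 0$, and rescaling by $z\mapsto z/(\mathds{1}_l^T z)$ yields a point of $\kernel(B'^T)\cap\Delta^l$ that still violates it; the hypothesis $\kernel(B'^T)\cap\R^l_+\neq\{0\}$ guarantees this base simplex is nonempty so that the criterion is not vacuous (cf.\ Lemma~\ref{lem:kernel}, and Example~\ref{ex:proj1}(1), which shows the hypothesis cannot be dropped).

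I do not expect a genuine obstacle: the content is entirely the observation that exchanging the inner projected polyhedron for a projected spectrahedron leaves the duality untouched. The only point deserving care is to confirm that the \emph{affine} version of Farkas' Lemma is the correct certificate, so that no conic or semidefinite Farkas lemma is invoked — which would reintroduce the closure and constraint-qualification complications of Lemmas~\ref{lem:farkas_cone_2} and~\ref{lem:boyd}. This simplification is legitimate precisely because the outer set is polyhedral rather than a true projected spectrahedron.
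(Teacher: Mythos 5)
Your proposal is correct and matches the paper's own proof essentially verbatim: for fixed $x\in\pisa$ the paper likewise applies the affine Farkas' Lemma \cite[Proposition 1.7]{Ziegler1995} to the linear system $b+Bx+B'y'\geq 0$ in $y'$, and then obtains part (2) by the same rescaling of $z$ onto $\kernel(B'^T)\cap\Delta^l$ (the paper delegates this normalization to the arguments of Theorem~\ref{thm:pispis} and Corollary~\ref{cor:pispis}). Your added remarks --- that the inner set's spectrahedral structure is irrelevant since containment is tested pointwise, and that the exactness of the polyhedral Farkas certificate is what avoids the closure and constraint-qualification issues of Lemmas~\ref{lem:farkas_cone_2} and~\ref{lem:boyd} --- are exactly the point the paper is making implicitly.
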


\begin{proof}
$\pisa\not\subseteq\pipb$ if and only if there exists $x\in\pisa$ such that 
$\nexists y'\in\R^n :\ b+Bp+B'y'\geq 0$. By Farkas' Lemma~\cite[Proposition
1.7]{Ziegler1995} this is equivalent to the existence of a $z\in\R^l_+$
with $z^T B'=0$ and $z^T (b+Bp)<0$. The claims follow as in the proofs of
Theorem~\ref{thm:pispis} and Corollary~\ref{cor:pispis}.
\end{proof}

We close with an example.

\begin{ex}\label{ex:pispis}
Let $M$ be the convex hull of the shifted unit disks defined by the identities
$1-(x_1+1)^2-x_2^2 =0$ and $1-(x_1-1)^2-x_2^2=0$, respectively.
$M$ is the projection of a spectrahedron.
Indeed, considering only the first disk and shifting it along the segment
$[-1,1]\times\{0\}$ yields 
$ M = \left\{ x\in\R^2\ |\ \exists y\in\R:\ 1-(x-y)^2-x_2^2 \geq 0,\ 
  -1\leq y\leq 1 \right\} $.
It is the projection of the 3-dimensional cylinder, see
Figure~\ref{fig:cylinder}, defined by the linear pencil
\begin{align*}
  A(x,y) = \begin{bmatrix} 1-x_2 & x_1-y \\ x_1-y & 1+x_2 \end{bmatrix} \oplus
  \begin{bmatrix} 1-y&0 \\ 0& 1+y \end{bmatrix} .
\end{align*}

\begin{figure}[ht]
\centering
\includegraphics[width=.25\textwidth]{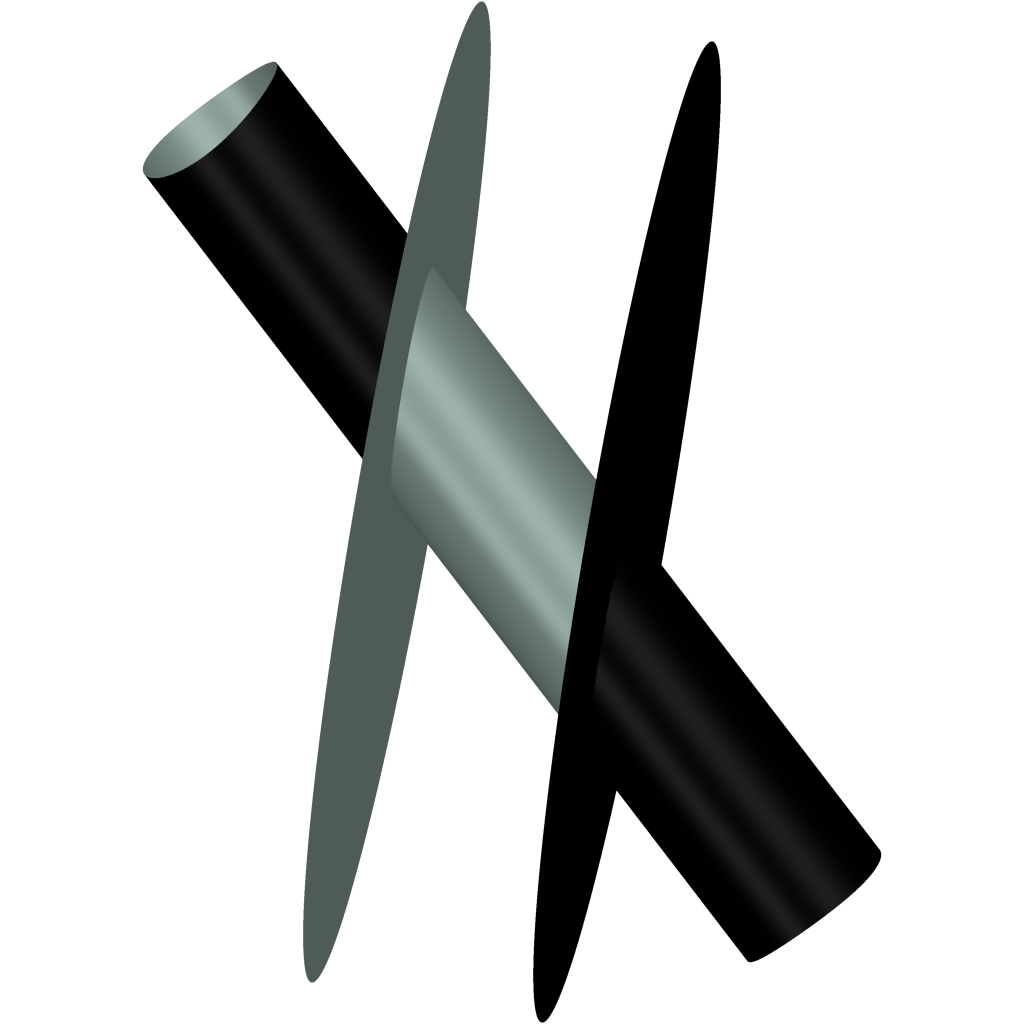}
\caption{The determinantal variety of $A(x,y)$ with $S_A$ being the grey
cylinder in the middle of the picture; see Example~\ref{ex:pispis}.}
\label{fig:cylinder}
\end{figure}

The so-called TV screen (see, e.g., \cite[Section 6.3.1]{Blekherman2013}) is
the projection of the spectrahedron
\[
  S_B = \left\{ (x,y)\in\R^{2+2}\ |\ A(x,y) = 
  \begin{bmatrix} 1 + y_1 & y_2 \\ y_2 & 1 - y_1 \end{bmatrix} \oplus
  \begin{bmatrix} 1 & x_1 \\ x_1 & y_1 \end{bmatrix} \oplus 
  \begin{bmatrix} 1 & x_2 \\ x_2 & y_2 \end{bmatrix} \succeq 0 \right\} ,
\]
onto the $x$ variables; see Figure~\ref{fig:tvscreen}.

\begin{figure}[ht]
  \centering
  \includegraphics[scale=.25]{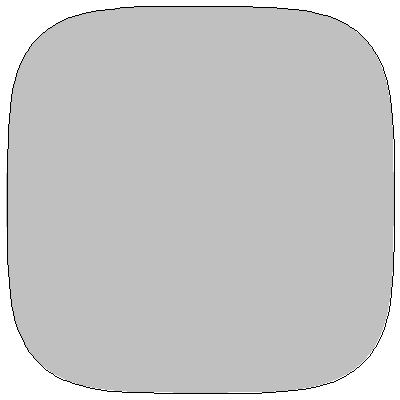}
  \caption{The TV screen as stated in Example~\ref{ex:pispis}.}
\label{fig:tvscreen}
\end{figure}

Both $\pisa$ and $\pisb$ are closed but not spectrahedra. We have
\begin{align*}
  \bar\B^{\bot}\cap\T^6
  &= \left\{ Z\in\psd^6\ |\ \sum_{i=1}^6 Z_{ii} = 1,\ Z_{22} = Z_{11} +
Z_{44},\ Z_{66}=-2Z_{12}\right\} .
\end{align*}
For all $Z\in \bar\B^{\bot}\cap\T^6$ the objective $\langle B(x,0),Z\rangle$
in Corollary~\ref{cor:pispis} can then be written as
\begin{align*}
  \left\langle B(x,0),Z\right\rangle
  &= 1-Z_{44}-Z_{66}+2x_1 Z_{34} + 2x_2 Z_{56} .
\end{align*}
We want to find a pair $(x^*,Z^*)\in\pisa\times (\bar\B^{\bot}\cap\T^6)$ such
that $\langle B(x,0),Z\rangle <0$.
To this end, consider $ x^* = (1+\varepsilon,0)\in\pisa$ for all 
$\varepsilon\in [0,1]$ and
\[
  Z^* = \begin{bmatrix} 0 & 0 \\ 0 & \frac{1}{3} \end{bmatrix} \oplus
  \begin{bmatrix} \frac{1}{3} & -\frac{1}{3} \\ -\frac{1}{3} & \frac{1}{3}
\end{bmatrix} \oplus 0_{3\times 3} \in\psd^6 .
\]
Then $\langle B(x^*,0),Z^*\rangle = -\frac{2}{3}\varepsilon <0$ for all
$\varepsilon\in (0,1]$. Thus $\pisa\not\subseteq\pisb$.

Now interchange the roles of $S_A$ and $S_B$, i.e., $\pisa$ is the TV screen
and $\pisb$ is the convex hull of two disks.
Then $\bar\B^{\bot}\cap\T^4$ is the set
\begin{align*}
  \bar\B^{\bot}\cap\T^4 
  &= \left\{ Z\in\psd^4\ |\ \sum_{i=1}^4 Z_{ii} = 1,\ 2 Z_{12} = Z_{44} -
Z_{33} \right\} .
\end{align*}
For all $Z\in \bar\B^{\bot}\cap\T^4$, $\langle B(x,0),Z\rangle$ has the form
\begin{align*}
  \left\langle B(x,0),Z\right\rangle
  &= (1-x_2)Z_{11} + (1+x_2)Z_{22} + (1-x_1)Z_{33} +(1+x_1)Z_{44} .
\end{align*}
As $1\pm x_i \ge 0$ for all $x\in\pisa$, we have $\langle B(x,0),Z\rangle\ge 0$
for all $(x,Z)\in\pisa\times (\bar\B^{\bot}\cap\T^4)$. Thus
$\pisa\subseteq\pisb$.
\end{ex}

\section{Sum of Squares Certificates for the
\texorpdfstring{$\pis$}{piS}-in-\texorpdfstring{$\sym$}{S} Containment Problem}
\label{sec:pis-s}

Retreating to the cases $\pih$-in-$\HH$ and $\pis$-in-$\sym$ allows to bring
forward several results from the non-projected case. 
We start with the polyhedral situation in Theorem~\ref{thm:pih-h}. 
It also serves as an algorithmic proof of Theorem~\ref{thm:complexity_pih_h}.
Afterwards, we state and prove a sophisticated Positivstellensatz for the
second problem.

\subsection{From the \texorpdfstring{$\pih$}{piH}-in-\texorpdfstring{$\HH$}{H}
to the \texorpdfstring{$\pis$}{piS}-in-\texorpdfstring{$\sym$}{S} Containment
Problem}
\label{sec:pih-to-pis}

As the proofs of the statements in this section are similar to the ones 
given in~\cite{ktt1}, we only stress the emerging differences in the proofs.

Even in the non-projected case, i.e., $m=0$, Theorem~\ref{thm:pih-h} below is
a slight extension of a statement in~\cite{ktt1}.
Namely, here we drop the conditions $a=\mathds{1}_k,$ and $b=\mathds{1}_l$ as
well as the boundedness condition.

\begin{thm} \label{thm:pih-h}
Consider the polyhedra $P_A = \{(x,y)\in\R^{d+m}\ |\ a+Ax+A'y \ge 0
\}\neq\emptyset$ and $P_B = \{x\in\R^d\ |\ b+Bx \ge 0 \}$.
\begin{enumerate}
  \item
  $\pipa\subseteq P_B$ if and only if there exists a nonnegative
  matrix $C\in\R^{l\times k}_+$ and a nonnegative vector $c_0\in\R^l_+$ with
  $b = c_0 + Ca,\ B = CA$, and $0=CA'$.
  \item
  Let $P_A$ be a polytope that is not a singleton. Then $\pipa\subseteq P_B$
  if and only if there exists a nonnegative matrix 
  $C\in\R^{l\times k}_+$ with $b = Ca,\ B=CA$, and $0=CA'$.
\end{enumerate}
\end{thm}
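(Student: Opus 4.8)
The statement is a Farkas-type / Positivstellensatz result characterizing
containment of a projected polyhedron $\pi(P_A)$ in an $\HH$-polyhedron $P_B$.
The plan is to build on Theorem~\ref{thm:pihpih}(1), which already reduces the
containment $\pipa\subseteq P_B$ to the bilinear nonnegativity condition
$z^T(b+Bx)\geq 0$ on $\pipa\times(\kernel(B'^T)\cap\R^l_+)$; but here, since the
outer set $P_B$ is genuinely polyhedral (no outer projection, $n=0$), I expect
a cleaner and fully \emph{exact} characterization via a nonnegative certificate
matrix $C$. The natural route is the direct one: containment of one polyhedron
in another, row by row. Each row $b_j+(Bx)_j\geq 0$ of $P_B$ must be a
nonnegative (conic) consequence of the system $a+Ax+A'y\geq 0$ defining $P_A$,
valid for all $(x,y)\in P_A$.

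\smallskip

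\textbf{Step 1 (reduction to the affine Farkas lemma).}
First I would observe that $\pipa\subseteq P_B$ holds if and only if each
linear functional $x\mapsto b_j+(Bx)_j$ is nonnegative on $P_A$ as a function of
$(x,y)$ (it is constant in $y$). Thus $\pipa\subseteq P_B$ is equivalent to:
for every $j\in[l]$, the implication
$a+Ax+A'y\geq 0 \Rightarrow b_j+(Bx)_j\geq 0$ holds on all of $\R^{d+m}$.

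\textbf{Step 2 (apply the affine form of Farkas' lemma).}
For each $j$, the affine Farkas lemma (the same
\cite[Proposition~1.7]{Ziegler1995} already invoked) states that a linear
inequality is a consequence of a feasible linear system exactly when it is a
nonnegative combination of the system's inequalities plus a nonnegative
constant. Concretely, $b_j+(Bx)_j\geq 0$ on $P_A$ iff there exist a nonnegative
row vector $C_j\in\R^k_+$ and a constant $(c_0)_j\geq 0$ with
$b_j=(c_0)_j+C_j a$, the $x$-coefficients matching $(B)_{j\cdot}=C_j A$, and the
$y$-coefficients vanishing $0=C_j A'$ (the last equality is forced because the
functional is identically zero in $y$ while $A'y$ ranges freely). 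Stacking the
rows $C_j$ into the matrix $C\in\R^{l\times k}_+$ and the constants into
$c_0\in\R^l_+$ yields $b=c_0+Ca$, $B=CA$, $0=CA'$, proving part~(1). The reverse
direction is a one-line check: these identities force
$b+Bx=c_0+C(a+Ax+A'y)\geq 0$ whenever $a+Ax+A'y\geq 0$.

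\smallskip

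\textbf{Step 3 (the polytope refinement, part (2)).}
For part~(2) I would show that when $P_A$ is a bounded polytope that is not a
singleton, the constant term $c_0$ can be absorbed, i.e.\ taken to be zero.
Since $P_A$ is a polytope, it is the convex hull of its finitely many vertices,
and the all-ones functional $\mathds{1}_k^T(a+Ax+A'y)$ is bounded below by a
positive constant is \emph{not} automatic; instead the key point is that
boundedness lets us choose a strictly positive combination of the constraints,
and the non-singleton hypothesis guarantees $P_A$ has more than one point so
that the certificate need not carry a separate slack. More carefully, I would
argue that the recession cone of $P_A$ being trivial (boundedness) makes the
constant $c_0$ expressible through the existing conic combination $Ca$ after a
suitable adjustment, following the corresponding argument in~\cite{ktt1}; the
non-singleton condition rules out the degenerate case where $a+Ax+A'y$ is
identically zero on $P_A$ and the row scaling collapses.

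\smallskip

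\textbf{Main obstacle.}
The routine part is part~(1), which is essentially a transcription of affine
Farkas applied row-wise; the delicate point is the $y$-elimination, ensuring
$0=CA'$ emerges rather than being imposed, and handling the case where $P_A$ is
lower-dimensional in $(x,y)$. The genuine obstacle is part~(2): correctly
justifying the removal of $c_0$ under boundedness-and-non-singleton, where one
must be careful that dropping $c_0$ does not lose certificates in edge cases
(e.g.\ when some facet of $P_B$ is tight only at a vertex of $\pipa$). I expect
to mirror the argument in~\cite{ktt1} while noting, as the paragraph preceding
the theorem emphasizes, that the weaker hypotheses (no $a=\mathds{1}_k$,
$b=\mathds{1}_l$, no boundedness in part~(1)) require re-examining each step to
confirm the normalizations were inessential.
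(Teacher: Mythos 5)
Your part (1) is correct and is essentially the paper's own argument: row-wise application of the affine Farkas lemma (the paper's Lemma~\ref{lem:farkas_aff}, \cite[Corollary 7.1h]{Schrijver1986}) to each $(b+Bx+0y)_i$ on the nonempty polyhedron $P_A$, with $B=CA$ and $0=CA'$ extracted by comparing the $x$- and $y$-coefficients in the resulting polynomial identity; the converse is the one-line computation $b+Bx = c_0 + C(a+Ax+A'y)$. Your detour through Theorem~\ref{thm:pihpih} in the ``plan'' is unnecessary and you correctly abandon it, so no objection there.

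Part (2), however, has a genuine gap: you assert that ``boundedness lets us choose a strictly positive combination of the constraints'' and that $c_0$ is then ``expressible through the existing conic combination after a suitable adjustment, following \cite{ktt1}'', but you never produce that combination, and this is exactly the step the paper must redo because the normalizations $a=\mathds{1}_k$, $b=\mathds{1}_l$ from \cite{ktt1} are dropped here. The missing tool is Stiemke's transposition theorem (\cite[Section 7.8]{Schrijver1986}): since $P_A$ is a polytope, its recession cone $\{(x,y)\ |\ Ax+A'y\geq 0\}$ is trivial, so the system $Ax+A'y\geq 0$, $Ax+A'y\neq 0$ has no solution, and Stiemke yields $\lambda>0$ with $\lambda^T A=0$ and $\lambda^T A'=0$, whence $\lambda^T(a+Ax+A'y)\equiv\lambda^T a$ is constant on $\R^{d+m}$. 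The paper first translates a point of $P_A$ to the origin — checking that the certificate equations $b=Ca$, $B=CA$, $0=CA'$ are invariant under this translation — so that w.l.o.g.\ $a\geq 0$; then $\lambda^T a>0$ unless $a=0$, in which case $P_A=\{0\}$ is a singleton. This is precisely where the non-singleton hypothesis enters, so your guess about its role (``rules out the degenerate case where $a+Ax+A'y$ is identically zero on $P_A$'') is right in spirit, but it remains a guess rather than an argument. After rescaling so that $\lambda^T a=1$, each constant $c'_{i0}$ from the part-(1) certificate is rewritten as $c'_{i0}\,\lambda^T(a+Ax+A'y)$ and absorbed into the $i$th row of $C$; the identities $B=CA$ and $0=CA'$ survive because $\lambda^T A=0=\lambda^T A'$, and $b=Ca$ follows by comparing constant terms. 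Without constructing $\lambda$ (via Stiemke or an equivalent Gordan-type theorem applied to $[A\ A']$), the elimination of $c_0$ — the entire content of part (2) — is unproven, so as written your part (2) is a plan, not a proof.
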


Testing whether $P_A$ is a singleton is easy as one has to check that the
system of equalities $a+Ax=0$ has a single solution. Certainly, in this
situation, checking containment is trivial as $\pipa\subseteq P_B$ is
equivalent to test whether a single point has nonnegative entries. 
The precondition in part (2) of Theorem~\ref{thm:pih-h}, however, cannot be
removed in general; see part (1) of Example~\ref{ex:h-h}.

For unbounded polyhedra the additional term $c_0$ is required in order for the
criterion to be exact. Without it, already in the simple case of two half
spaces defined by two parallel hyperplanes, the restriction of the condition
in part (1) of Theorem~\ref{thm:pih-h} to part (2) can fail to be feasible;
see part (2) of Example~\ref{ex:h-h}.

\begin{ex}\label{ex:h-h}
\ 

  (1)
Consider the polytopes $P_A$ and $P_B$ given by the systems of linear 
inequalities
\[
  \begin{pmatrix} 1\\-1\\0\end{pmatrix} 
  + \begin{bmatrix} -1&-1\\1&0\\0&1 \end{bmatrix}x \geq 0 \text{ and }
    \begin{pmatrix} 0\\2\\2 \end{pmatrix} 
  + \begin{bmatrix} 1&0\\-1&-1\\-1&1 \end{bmatrix}x \geq 0 ,
\]
respectively.
$P_A$ is the singleton $\{(1,0)\}$ and $P_B$ is a simplex containing $P_A$.
There is no matrix $C$ satisfying the conditions in part (2) of
Theorem~\ref{thm:pih-h} (with $m=0$). 
Indeed, $b=Ca$ implies $0=C_{11}-C_{12}$ and $B=CA$ implies 
$1=B_{11}=(-C_{11}+C_{12},-C_{11})$, a contradiction.
A solution to the linear feasibility system in Theorem~\ref{thm:pih-h} (1)  is
given by
\[
  c_0 = \mathds{1}_3,\ C = \begin{bmatrix} 0&1&0\\1&0&0\\1&0&2\end{bmatrix} .
\]
Moreover, it is easy to see that for any $P_B = \{x\in\R^2\ |\ b+Bx\geq 0\}$
containing $P_A$ containment is certified if and only if $B$ has the form
$B=[-b,-b+c]$ for some vector $c$.

  (2)
Consider the half space given by the linear polynomial $a(x)=1-x_1-x_2$. Let
$b(x)=b+[B_1,B_2]x$ be any half space. The condition in part (2) of the
Theorem~\ref{thm:pih-h} is satisfied if and only if $b=c,\ B_1=-c,\ B_2=-c$
for $c\geq 0$. Thus either $b(x)\equiv 0$ or $b(x)$ is a positive multiple of
$a(x)$.
\end{ex}

To prove Theorem~\ref{thm:pih-h} we use the following affine form of Farkas'
Lemma.

\begin{lemma}[{\cite[Corollary 7.1h]{Schrijver1986}}] \label{lem:farkas_aff}
Let $P=\{ x\in\R^d\ |\ a+Ax \geq 0\}$ be nonempty.
Then every affine polynomial $f\in\R[x]$ nonnegative on $P$ can be written as
$f(x) = c_0 + \sum_{i=1}^m c_i (a+Ax)_i$ with nonnegative coefficients $c_i$. 
\end{lemma}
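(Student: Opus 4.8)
The plan is to reduce the affine statement to the homogeneous (conic) form of Farkas' Lemma by homogenizing in one extra variable $x_0$. Write the affine polynomial as $f(x)=f_0+\sum_{j=1}^d f_j x_j$ and introduce the linear form $\hat f(x_0,x)=f_0 x_0+\sum_{j=1}^d f_j x_j$ on $\R^{1+d}$ together with the polyhedral cone
\[
  C=\bigl\{(x_0,x)\in\R^{1+d}\ \big|\ x_0\ge 0,\ a\,x_0+Ax\ge 0\bigr\}.
\]
First I would verify that $\hat f$ is nonnegative on all of $C$. On the open part $x_0>0$ this is immediate: $(x_0,x)\in C$ forces $x/x_0\in P$, and $\hat f(x_0,x)=x_0\,f(x/x_0)\ge 0$ by the hypothesis that $f\ge 0$ on $P$.

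The delicate part is the boundary face $x_0=0$, and this is exactly where nonemptiness of $P$ enters. A point $(0,x)\in C$ satisfies $Ax\ge 0$, i.e.\ $x$ is a recession direction of $P$. Fixing any $p\in P$ (which exists by assumption), one has $p+tx\in P$ for every $t\ge 0$, hence $0\le f(p+tx)=f(p)+t\,\hat f(0,x)$; dividing by $t$ and letting $t\to\infty$ yields $\hat f(0,x)\ge 0$. Together with the previous step this shows $\hat f\ge 0$ on $C$.

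Now $C=\{v\in\R^{1+d}\ |\ Mv\ge 0\}$, where the rows of $M$ are the vector $(1,0,\dots,0)$ (encoding $x_0\ge 0$) and the vectors $(a_i,A_{i1},\dots,A_{id})$ for $i\in[m]$. I would then invoke the homogeneous form of Farkas' Lemma: a linear functional nonnegative on a polyhedral cone $\{v\ |\ Mv\ge 0\}$ lies in the cone generated by the rows of $M$. (If one prefers a self-contained argument, the cone generated by the rows of $M$ is closed, being finitely generated; were $\hat f$ outside it, the separating-hyperplane theorem would supply a $v\in C$ with $\hat f(v)<0$, contradicting the previous paragraph.) This produces $c_0\ge 0$ and $c_1,\dots,c_m\ge 0$ with $\hat f=c_0(1,0,\dots,0)+\sum_{i=1}^m c_i(a_i,A_{i1},\dots,A_{id})$. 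Reading off the $x_0$-coordinate gives $f_0=c_0+\sum_i c_i a_i$ and the $x_j$-coordinates give $f_j=\sum_i c_i A_{ij}$, so that
\[
  f(x)=f_0+\sum_{j=1}^d f_j x_j=c_0+\sum_{i=1}^m c_i\bigl(a+Ax\bigr)_i ,
\]
which is the desired representation.

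The only genuine obstacle is the treatment of the recession face $x_0=0$: without first establishing $\hat f\ge 0$ there, the cone $C$ would not be the correct polyhedral cone on which to apply conic duality, and the argument would break down precisely when $P$ is unbounded. Everything else is a routine translation between the affine and homogeneous pictures.
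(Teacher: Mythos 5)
Your proof is correct. The paper itself gives no argument for this lemma: it is imported verbatim from Schrijver's book (Corollary 7.1h), where it is derived from the LP duality theorem (Schrijver's Corollary 7.1g) by taking the dual of $\min\{f(x)\ |\ x\in P\}$ and reading off the nonnegative multipliers from an optimal dual solution. Your route is genuinely different and self-contained: you homogenize in a variable $x_0$, verify nonnegativity of the homogenized functional $\hat f$ on the cone $C=\{(x_0,x)\ |\ x_0\ge 0,\ ax_0+Ax\ge 0\}$, and then invoke the conic Farkas lemma (dual cone of $\{v\ |\ Mv\ge 0\}$ equals the cone generated by the rows of $M$, via closedness of finitely generated cones and separation). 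The two approaches are of course cousins -- LP duality and the closedness of finitely generated cones are the same underlying fact -- but yours makes visible exactly where the hypothesis $P\neq\emptyset$ is used, namely on the recession face $x_0=0$, where you need a base point $p\in P$ to conclude $\hat f(0,x)\ge 0$ along recession directions; this is indeed essential (for $P=\emptyset$ the lemma fails, e.g.\ with constraints involving only $x_1$ and $f(x)=x_2$), whereas in the LP-duality derivation the same hypothesis enters more opaquely through primal feasibility guaranteeing dual attainment. The bookkeeping at the end (the extra row $(1,0,\ldots,0)$ producing the constant $c_0$, and coefficient comparison in the $x_0$- and $x_j$-coordinates) is carried out correctly.
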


\begin{proof}[Proof of Theorem~\ref{thm:pih-h}]
If $B=CA,\ 0=CA'$, and $b=Ca$ (resp. $b=c_0 + Ca$) with a nonnegative matrix
$C$, for any $x \in\pipa$ we have 
\[
  b+Bx+0y = C \left(a+Ax+A'y \right) \geq 0 ,
\]
i.e., $\pipa\subseteq P_B$.

Conversely, if $\pipa\subseteq P_B$, then any of the linear polynomials
$(b+Bx+0y)_i$, $i\in [l]$, is nonnegative on $P_A$. 
Hence, by Lemma~\ref{lem:farkas_aff}, $(b+Bx+0y)_i$ can be written as a linear
combination
\[
  (b+Bx+0y)_i\ =\ c'_{i0} + \sum_{j=1}^k c'_{ij} (a+Ax+A'y)_j
\]
with nonnegative coefficients $c'_{ij}$. Comparing coefficients yields
$ b_i = c'_{i0} + \sum_{j=1}^k c'_{ij} $ for $i\in [l]$, implying part (1) of
the statement.

To prove the second part, first translate both $P_A$ and $P_B$ to the origin. 
By assumption, there exists $(\bar{x},\bar{y})\in P_A$. 
Define $\bar{a}:=a+A\bar{x}+A'\bar{y}$ and $\bar{b}:=b+B\bar{x}$. 
Then $\bar{a}\geq 0$ and $0\in\{x\in\R^d\ |\ \bar{a}+Ax+A'y\geq 0\}$, implying
\[
  \bar{b}=C\bar{a},\ B=CA,\ 0 =CA' 
  \iff b=Ca,\ B=CA,\ 0 =CA' .
\]
Thus w.l.o.g. let $a\geq 0$.

Stiemke's Transposition Theorem~\cite[Section 7.8]{Schrijver1986} implies the
existence of a $\lambda>0$ such that $[A^T,A'^T]\lambda =0$, and thus
\[
  \lambda^T (a+Ax+A'y) = \lambda^T a = 1
\]
after an appropriate rescaling.
Note that $a\neq 0$ as otherwise $P_A =\{0\}$ is a singleton.
By multiplying that equation with $c'_{i0}$ from above, we obtain nonnegative 
$c''_{ij}$ with $\sum_{j=1}^k c''_{ij} (a+Ax+A'y)_j = c'_{i0}$, yielding 
\[
  (b+Bx)_i \ = \ \sum_{j=1}^k (c'_{ij}+c''_{ij}) (a+Ax+A'y)_j .
\]
Hence, $C = (c_{ij})_{i,j=1}^k$ with $c_{ij} := c'_{ij} + c''_{ij}$ is a
nonnegative matrix with $B = CA$, $0=CA'$, and 
$ (Ca)_i = \sum_{j=1}^k (c'_{ij} + c''_{ij}) a_j
  = b_i - c'_{i0} + c'_{i0}\,\lambda^T a = b_i $ for every $i\in[l]$.
\end{proof}

The sufficiency part of Theorem~\ref{thm:pih-h} can be extended to the case of
projected spectrahedra via the normal form~\eqref{eq:normalform} of a
(projected) polyhedron $P_A$ as a (projected) spectrahedron,
\[
  \pipa\ =\ \left\{x\in\R^d\ |\ \exists y\in\R^m :\  
  A(x,y) = \diag(a_1(x,y), \ldots, a_k(x,y)) \succeq 0 \right\},
\]
where $a_i(x,y)$ is the $i$th entry of the vector $a + Ax + A'y$. 

\begin{cor} \label{cor:pih-h}
Let $A(x,y)\in\sym^k[x,y]$ and $B(x)\in\sym^l[x]$ be normal forms of
polyhedra~\eqref{eq:normalform}. 
\begin{enumerate}
  \item
  $\pisa\subseteq S_B$ if and only if there exist positive
semidefinite diagonal matrices $C_0,\ C$ such that
\begin{equation} \label{eq:inclusion-pip}
  B_0 = C_0 + \sum_{i=1}^k (A_{0})_{ii} C_{ii},\ 
  B_p = \sum_{i=1}^k (A_{p})_{ii} C_{ii}\ \forall p\in[d],\ 
  0 = \sum_{i=1}^k (A'_{p})_{ii} C_{ii}\ \forall p\in[m].
\end{equation}
  \item
  Let $S_A$ be a polytope that is not a singleton. $\pisa\subseteq S_B$ if and 
  only if system~\eqref{eq:inclusion-pip} has a solution with $C_0=0$.
\end{enumerate}
\end{cor}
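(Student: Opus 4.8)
The plan is to recognize Corollary~\ref{cor:pih-h} as the transcription of Theorem~\ref{thm:pih-h} into the diagonal normal form~\eqref{eq:normalform}, so that no new analytic input is needed and both directions follow from the corresponding parts of that theorem via a dictionary between the two kinds of certificates. First I would note that, since $A(x,y)$ and $B(x)$ are diagonal, $A(x,y)\succeq 0$ holds exactly when every diagonal entry $a_i(x,y)=(a+Ax+A'y)_i$ is nonnegative, whence $S_A=P_A$ and $\pisa=\pipa$; likewise $S_B=P_B$. Thus $\pisa\subseteq S_B$ is literally the polyhedral containment $\pipa\subseteq P_B$, Theorem~\ref{thm:pih-h} applies, and $S_A$ is a non-singleton polytope precisely when $P_A$ is. Reading off the diagonal entries of the normal form gives $(A_0)_{ii}=a_i$, $(A_p)_{ii}=A_{ip}$, $(A'_p)_{ii}=A'_{ip}$ and $(B_0)_{jj}=b_j$, $(B_p)_{jj}=B_{jp}$, which is the bridge between the two formulations.

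Next I would set up the correspondence between certificates. Given a nonnegative matrix $C=(C_{ji})\in\R^{l\times k}_+$ and a nonnegative vector $c_0\in\R^l_+$ from Theorem~\ref{thm:pih-h}, I define the diagonal $l\times l$ matrices
\[
  C_{ii} := \diag(C_{1i},\ldots,C_{li})\quad(i\in[k]) \qquad\text{and}\qquad C_0 := \diag((c_0)_1,\ldots,(c_0)_l),
\]
collected into the block-diagonal $C=\blkdiag(C_{11},\ldots,C_{kk})$ of the corollary; conversely the diagonal entries $(C_{ii})_{jj}=C_{ji}$ recover a unique $(C,c_0)$. Since a diagonal matrix is positive semidefinite iff its diagonal entries are nonnegative, ``$C,c_0$ nonnegative'' and ``$C_0,C$ positive semidefinite (diagonal)'' are the same requirement. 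It then remains to compare the two families of defining equations entrywise: reading the $(j,j)$ entry of $B_p=\sum_{i=1}^k (A_p)_{ii}C_{ii}$ gives $B_{jp}=\sum_{i=1}^k A_{ip}C_{ji}=(CA)_{jp}$, i.e.\ $B=CA$; of $0=\sum_{i=1}^k (A'_p)_{ii}C_{ii}$ gives $0=(CA')_{jp}$, i.e.\ $0=CA'$; and of $B_0=C_0+\sum_{i=1}^k (A_0)_{ii}C_{ii}$ gives $b_j=(c_0)_j+\sum_{i=1}^k a_iC_{ji}$, i.e.\ $b=c_0+Ca$. Hence a corollary-certificate exists iff a Theorem~\ref{thm:pih-h}-certificate does, part (1) follows from Theorem~\ref{thm:pih-h}(1), and part (2) from Theorem~\ref{thm:pih-h}(2) with the vanishing $C_0=0$ matching $c_0=0$.

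To make the conceptual content of the sufficiency (``if'') direction transparent I would also record the direct computation, since that is the part genuinely extending to the projected setting. If the equations hold with $C_0,C_{ii}\succeq 0$, then substituting $B_0$ and the $B_p$ into $B(x)$ yields $B(x)=C_0+\sum_{i=1}^k\big[(A_0)_{ii}+\sum_p (A_p)_{ii}x_p\big]C_{ii}$, and the constraint $0=\sum_{i=1}^k (A'_p)_{ii}C_{ii}$ lets me add the vanishing terms $\sum_p y_p\sum_i (A'_p)_{ii}C_{ii}=0$ for free, giving $B(x)=C_0+\sum_{i=1}^k a_i(x,y)\,C_{ii}\succeq 0$ for every $(x,y)\in S_A$; this is exactly the mechanism by which the projection variables $y$ are eliminated. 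The only real work in the whole argument is the index bookkeeping tying $(C_{ii})_{jj}$ to $C_{ji}$ and the elementary observation that diagonal positive semidefiniteness equals entrywise nonnegativity, so I expect no analytic obstacle — the substance is carried entirely by Theorem~\ref{thm:pih-h}, and the role of the equality $0=CA'$ is the one feature worth stressing as the difference from the non-projected case.
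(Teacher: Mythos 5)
Your proposal is correct and matches the paper's route exactly: the paper presents Corollary~\ref{cor:pih-h} as an immediate transcription of Theorem~\ref{thm:pih-h} through the normal form~\eqref{eq:normalform}, and your dictionary between nonnegative pairs $(C,c_0)$ and positive semidefinite diagonal pairs $(C,C_0)$, together with the entrywise comparison $B=CA$, $0=CA'$, $b=c_0+Ca$, is precisely the intended argument. The only point worth adding is that the nonemptiness hypothesis $P_A\neq\emptyset$ of Theorem~\ref{thm:pih-h} is tacitly inherited by the corollary (via $S_A=P_A$), so you should carry it along when invoking part~(1).
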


If the diagonality condition on the matrix $C$ in Corollary~\ref{cor:pih-h} is
dropped, then the above SDFP yields a sufficient condition for the
$\pis$-in-$\sym$ containment problem. 
Subsequently, the indeterminate matrix $C=\left(C_{ij}\right)_{i,j=1}^{k}$ is
a symmetric $kl\times kl$-matrix, where the $C_{ij}$ are $l\times l$-blocks.

\begin{thm} \label{thm:inclusion-proj}
Let $A(x,y)\in\sym^k[x,y]$ and $B(x)\in\sym^l[x]$ be linear pencils. 
Denote by $\pisa$ the coordinate projection of the spectrahedron $S_A$. 
If there exist positive semidefinite matrices
$C = (C_{ij})_{i,j=1}^k\in\psd^{kl}$ and $C_0\in\psd^l$ such that
\begin{align} \label{eq:inclusion-proj}
\begin{split}
  B_0 = C_0 + \sum_{i,j=1}^k (A_0)_{ij} C_{ij} ,\,
  B_p = \sum_{i,j=1}^k (A_p)_{ij} C_{ij} \ \forall p \in [d] ,\ 
  0 = \sum_{i,j=1}^k (A'_{p})_{ij} C_{ij} \ \forall p \in [m] ,
\end{split}
\end{align}
then $\pisa\subseteq S_B$.
\end{thm}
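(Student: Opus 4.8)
The plan is to show that the existence of the matrices $C$ and $C_0$ certifies that $B(x)$ lies in the quadratic module associated with $A(x,y)$, hence is positive semidefinite wherever $A(x,y)\succeq 0$, which in turn forces containment of the projection. Concretely, I would read the three block-equations in~\eqref{eq:inclusion-proj} as nothing more than a matching of coefficients in the scalar-product construction from Section~\ref{sec:hol}. First I would form the expression $\langle C, A(x,y)\rangle_l$ as defined in~\eqref{eq:scalar}, where $C=(C_{ij})_{i,j=1}^k\in\psd^{kl}$ plays the role of the sos-matrix multiplier. Expanding $A(x,y)=A_0+\sum_{p=1}^d A_p x_p + \sum_{p=1}^m A'_p y_p$ and using bilinearity, one gets
\[
  \langle C, A(x,y)\rangle_l
  = \sum_{i,j=1}^k C_{ij}\langle (A_0)_{ij} + \textstyle\sum_p (A_p)_{ij}x_p + \sum_p (A'_p)_{ij}y_p \rangle,
\]
so the constant-in-$x$ block, the coefficient of each $x_p$, and the coefficient of each $y_p$ are exactly the three sums appearing in~\eqref{eq:inclusion-proj}. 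The equations then say precisely that $C_0 + \langle C, A(x,y)\rangle_l = B(x)$ as an identity in $\sym^l[x,y]$, with the $y$-dependence vanishing because of the constraints $0=\sum_{ij}(A'_p)_{ij}C_{ij}$.

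The second step is to exploit positivity. For any $(x,y)\in S_A$ we have $A(x,y)\succeq 0$, and since $C\in\psd^{kl}$ is positive semidefinite, the generalized Gram identity recalled after~\eqref{eq:scalar} (from~\cite{Hol2004}) gives $\langle C, A(x,y)\rangle_l\succeq 0$. Adding $C_0\in\psd^l\succeq 0$ yields $B(x)=C_0+\langle C,A(x,y)\rangle_l\succeq 0$ for every $(x,y)\in S_A$. Thus $x\in S_B$ whenever there exists $y$ with $(x,y)\in S_A$, which is exactly the statement $\pisa\subseteq S_B$.

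The bookkeeping in the first step is the only place requiring care: one must verify that the $y$-variables genuinely drop out, i.e.\ that the identity $C_0+\langle C,A(x,y)\rangle_l=B(x)$ holds as polynomials in $(x,y)$ and not merely after some substitution, which is guaranteed precisely by the $m$ equations $0=\sum_{ij}(A'_p)_{ij}C_{ij}$. I do not expect a genuine obstacle here, since this is the matrix-valued analogue of the sufficiency direction in Corollary~\ref{cor:pih-h}; dropping the diagonality of $C$ only weakens the statement to a one-directional (sufficient) criterion, which is all that is claimed. The key conceptual point to highlight is simply that $\langle\,\cdot\,,\,\cdot\,\rangle_l$ preserves positive semidefiniteness, so the whole argument is one invocation of the Gram property followed by coefficient comparison.
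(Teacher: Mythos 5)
Your proposal is correct and matches the paper's proof in essence: both read~\eqref{eq:inclusion-proj} as the polynomial identity $B(x)=C_0+\sum_{i,j=1}^k \left(A(x,y)\right)_{ij}C_{ij}$ in $\sym^l[x,y]$ (the $y$-terms cancelling by the last $m$ equations) and then conclude $B(x)\succeq 0$ for each $x\in\pisa$ by choosing $y$ with $A(x,y)\succeq 0$ and using positive semidefiniteness of the block sum. The only divergence is cosmetic: you cite the psd-preservation of $\langle\cdot,\cdot\rangle_l$ recalled after~\eqref{eq:scalar} --- which, to be scrupulous, requires first re-blocking $C$ by a simultaneous row/column permutation, since your $C$ carries $k\times k$ blocks of size $l\times l$ while the scalar product is defined for the transposed block structure --- whereas the paper proves the same positivity inline by noting that $\left((A(x,y))_{ij}C_{ij}\right)_{i,j=1}^k$ is a principal submatrix of the Kronecker product $A(x,y)\otimes C$ and compressing with $\mathbb{I}=[I_l,\ldots,I_l]^T$.
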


In the non-projected case, the sufficient semidefinite
criterion~\eqref{eq:inclusion-proj} has first been developed by Helton et
al.~\cite{Helton2010} using the theory of positive linear maps (cf.
Section~\ref{sec:pispispos}) and has been reproofed in~\cite{ktt1} by
elementary methods. In~\cite{kphd} the author showed that the condition is
exactly the 0th step of the hierarchy based on truncation of the Hol-Scherer
quadratic module~\eqref{eq:qm-matrix2}.
Here we are bringing this forward to the projected case.

For completeness we state a short proof of Theorem~\ref{thm:inclusion-proj}
based on~\cite{ktt1}.

\begin{proof}
We have
\begin{align*} 
\begin{split} 
  B(x) &= B_0 + \sum_{p=1}^d x_{p} B_{p} 
  =\ C_0 + \sum_{i,j=1}^{k} \left(A(x,y)\right)_{ij} C_{ij} 
  = C_0 + \mathbb{I}^T \left( (A(x,y))_{ij} C_{ij}\right)_{i,j=1}^{k}
\mathbb{I} 
\end{split}
\end{align*}
with $\mathbb{I} = \left[ I_l, \ldots, I_l \right]^T \in\R^{kl\times l}$.
Let $x\in\pisa$. By definition, there exists $y\in\R^m$ such that
$A(x,y)\succeq 0$. Thus the Kronecker product $A(x,y)\otimes C$ is positive
semidefinite.
Since $ \left((A(x,y))_{ij} C_{ij}\right)_{i,j=1}^k $ is a principal submatrix
of $A(x,y)\otimes C$, we have $B(x)\succeq 0$ as well.
\end{proof}

Even for the non-projected case, the sufficient semidefinite
criterion~\eqref{eq:inclusion-proj} is not necessary for containment in
general; see~\cite[Section 6.1]{ktt1}.

\subsection{A Sophisticated Positivstellensatz} \label{sec:sophisticated}

Consider the linear pencils $A(x,y)\in\sym^k[x]$ and $B(x)\in\sym^l[x]$. Then 
$\pisa$ is contained in $S_B$ if and only if $B(x) \succeq 0$ on $\pisa$. 
If $S_A$ is a spectratope, then this is equivalent to $B(x)+\varepsilon
I_l\in\qm^l(A)$ for all $\epsilon >0$, where
\[
  \qm^l(A) = \left\{ S_0 + \left\langle S,A(x,y)\right\rangle_l\ |\ 
  S_0\in\SOS^l[x,y],\ S\in\SOS^{kl}[x,y] \right\}
\]
is the quadratic module associated to $A(x,y)$ as defined
in~\eqref{eq:qm-matrix2}. Clearly, if $B(x)\in\qm^l(A)$ for a linear pencil
$B(x)\in\sym^l[x]$, then $\cl\pisa\subseteq S_B$.
Thus truncation of the $\qm^l(A)$ yields a hierarchy of SDFPs to decide
$\pis$-in-$\sym$ containment.

The drawback of this approach to the $\pis$-in-$\sym$ containment problem 
is that it relies on the geometry of the spectrahedron $S_A$ rather than its 
projection, namely the boundedness assumption on $S_A$ and the appearance of
the projection variables $y$ in the quadratic module.
In the following, we address this by developing a refinement of Hol-Scherer's
Positivstellensatz.
Particularly, we can eliminate the variables $y$ in the sense that they
neither appear in the quadratic module nor in the relaxation. 

Gouveia and Netzer~\cite{Gouveia2011} derived a Positivstellensatz for 
polynomials positive on the closure of a projected spectrahedron. 

\begin{prop}[{\cite[Theorem 5.1]{Gouveia2011}}] \label{prop:gouveia}
Let $A(x,y)\in\sym^k[x,y]$ be a strictly feasible linear pencil. Define the
quadratic module
\begin{align*}
  \qm(\pi A) =& \left\{ s_0 + \left\langle S,A(x,0) \right\rangle \ |\ 
  \left\langle S,A'_{i} \right\rangle = 0 \ \forall i\in[m],\ 
  s_0\in\SOS[x],\ S\in\SOS^k[x] \right\}.
\end{align*}
If $\pisa$ is bounded, then $\qm(\pi A)$ is Archimedean and contains all
polynomials positive on the closure of $\pisa$.
\end{prop}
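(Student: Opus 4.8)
The plan is to treat $\qm(\pi A)$ as an ordinary quadratic module in $\R[x]$ and to invoke the abstract Archimedean Positivstellensatz (Jacobi's representation theorem), so that the whole statement reduces to two points: that $\qm(\pi A)$ is Archimedean, and that its associated nonnegativity set is exactly $\cl\pisa$. First I would record the routine facts. The module $\qm(\pi A)$ is closed under addition and under multiplication by elements of $\SOS[x]$ (the orthogonality conditions $\langle S,A'_i\rangle=0$ are linear in $S$ and hence preserved, and $\SOS[x]\cdot\SOS^k[x]\subseteq\SOS^k[x]$) and it contains $1$, so it is genuinely a quadratic module. Moreover every $g=s_0+\langle S,A(x,0)\rangle\in\qm(\pi A)$ is nonnegative on $\cl\pisa$: for $x\in\pisa$ choose $y$ with $A(x,y)\succeq0$; the orthogonality conditions give $\langle S(x),A(x,0)\rangle=\langle S(x),A(x,y)\rangle$, which is nonnegative since $S(x)$ and $A(x,y)$ are positive semidefinite, while $s_0(x)\ge0$ as well. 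Nonnegativity then extends to the closure by continuity.

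The heart of the argument is a single semidefinite-duality computation that simultaneously eliminates the variables $y$, yields the Archimedean property, and pins down the nonnegativity set. Fix $c\in\R^d$ and consider the semidefinite program $p^*=\sup\{c^Tx\mid A(x,y)\succeq0\}$ over $(x,y)\in\R^{d+m}$. Strict feasibility of $A$ is a Slater point, and boundedness of $\pisa$ forces $p^*<\infty$; hence strong duality holds and the dual is attained. The dual reads $\min\{\langle A_0,Z\rangle\mid Z\succeq0,\ \langle A_i,Z\rangle=-c_i\ \forall i\in[d],\ \langle A'_j,Z\rangle=0\ \forall j\in[m]\}$, so there is a \emph{constant} matrix $Z^*\in\psd^k$ with $\langle A'_j,Z^*\rangle=0$ for all $j$ and $\langle A_0,Z^*\rangle=p^*$. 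Since $\langle Z^*,A(x,0)\rangle=\langle A_0,Z^*\rangle+\sum_i x_i\langle A_i,Z^*\rangle=p^*-c^Tx$ and a constant positive semidefinite matrix is a (degree-zero) sos-matrix orthogonal to every $A'_j$, we obtain
\[
  \Big(\sup_{\pisa}c^Tx\Big)-c^Tx\in\qm(\pi A)\qquad\text{for every }c\in\R^d.
\]

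From this linear family two things follow. Taking $c=\pm e_i$ gives $N_i\pm x_i\in\qm(\pi A)$ for suitable constants $N_i$; since the set of $\qm(\pi A)$-bounded elements is a subring of $\R[x]$ containing $1$ and all $x_i$, it also contains $\sum_i x_i^2$, i.e.\ $N-\sum_i x_i^2\in\qm(\pi A)$ for some $N$, which is the Archimedean property. For the nonnegativity set $K=\{x\mid g(x)\ge0\ \forall g\in\qm(\pi A)\}$, the inclusion $\cl\pisa\subseteq K$ is the nonnegativity established in the first paragraph, and the reverse inclusion follows by separation: if $x_*\notin\cl\pisa$, a separating functional $c$ gives $\sup_{\pisa}c^Tx<c^Tx_*$, so the module element $(\sup_{\pisa}c^Tx)-c^Tx$ is negative at $x_*$, whence $x_*\notin K$. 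Thus $K=\cl\pisa$. Finally, Jacobi's representation theorem for Archimedean quadratic modules yields that every $f\in\R[x]$ with $f>0$ on $\cl\pisa$ lies in $\qm(\pi A)$, completing the proof.

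The step I expect to be the crux is the duality computation. Everything hinges on the facts that the Slater condition furnishes \emph{dual attainment} (so the certificate $Z^*$ exists as a genuine matrix rather than only as a limit) and that the dual feasibility equation $\langle A'_j,Z^*\rangle=0$ is precisely the orthogonality defining $\qm(\pi A)$. This is exactly where the projection variables $y$ are removed: the ``for some $y$'' in the definition of $\pisa$ is converted into a $y$-free positive semidefinite certificate, which is also what makes the otherwise unwieldy module accessible to the abstract Archimedean machinery. The remaining ingredients, namely the subring of bounded elements and Jacobi's theorem, are standard and require no information about the specific pencil.
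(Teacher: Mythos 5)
Your proof is correct and takes essentially the same route as the paper: the paper cites this result from Gouveia--Netzer, but its own proof of the matrix generalization (Theorem~\ref{thm:pispossatz}) rests on exactly your key computation --- strict feasibility (Slater) plus boundedness of $\pisa$ give strong duality with dual attainment, producing a constant $Z^*\in\psd^k$ orthogonal to the $A'_j$ that certifies $\bigl(\sup_{\pisa}c^Tx\bigr)-c^Tx\in\qm(\pi A)$, whence Archimedeanness and an appeal to an Archimedean representation theorem. Your use of Jacobi's abstract theorem, together with the separation argument identifying the nonnegativity set as $\cl\pisa$, is in fact the appropriate level of care, since $\qm(\pi A)$ need not be finitely generated and Putinar's theorem in its usual finitely generated form does not apply verbatim.
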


Subsequently, we state and proof an extension to linear pencils positive 
definite on a projected spectrahedron. Thereto define the quadratic module
\begin{align} \label{eq:qm-matrix-proj}
  \qm^l(\pi A) = \left\{ S_0 + \left\langle S,A(x,0) \right\rangle_l\ |\ 
  \left\langle S,A'_{i} \right\rangle_l = 0\ \forall i\in [m],\ 
  S_0\in\SOS^l[x],\ S\in\SOS^{kl}[x] \right\} .
\end{align}
It is easy to see that $\qm^l(\pi A)$ is in fact a quadratic module.
Note that $\qm^l(\pi A)$ does not have to be finitely generated;
see~\cite[Section 5]{Gouveia2011}.
Clearly, every element of $\qm^l(\pi A)$ is positive semidefinite on the 
closure of $\pisa$.

\begin{thm} \label{thm:pispossatz}
Let $A(x,y)\in\sym^k[x,y]$ be a strictly feasible linear pencil such that
$\pisa$ is bounded.
For $l\in\N$ the quadratic module $\qm^l(\pi A)$ is Archimedean and contains
every matrix polynomial positive definite on $\cl\pisa$.
\end{thm}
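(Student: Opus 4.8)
The plan is to establish the two assertions separately: first that $\qm^l(\pi A)$ is Archimedean, and then that it contains every matrix polynomial positive definite on $\cl\pisa$. The base for everything is the scalar Positivstellensatz of Gouveia and Netzer (Proposition~\ref{prop:gouveia}), which already settles the case $l=1$: since $A(x,y)$ is strictly feasible and $\pisa$ is bounded, $\qm(\pi A)=\qm^1(\pi A)$ is Archimedean and contains all polynomials positive on $\cl\pisa$.

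For the Archimedean property of $\qm^l(\pi A)$ for general $l$, I would reduce to the case $l=1$ by copying the compression argument from the proof of Proposition~\ref{prop:archimedean}. Concretely, starting from a representation $N-x^Tx = S_0(x)+\langle S(x),A(x,0)\rangle$ with $S_0\in\SOS[x]$, $S\in\SOS^k[x]$ and $\langle S,A'_i\rangle=0$ for all $i$ (which exists because $N-x^Tx$ is positive on the bounded set $\cl\pisa$), one sets $\tilde S_0 = S_0 E_{jj}\in\SOS^l[x]$ and $\tilde S = E_{jj}\otimes S\in\SOS^{kl}[x]$. Then $\tilde S_0 + \langle \tilde S,A(x,0)\rangle_l = (N-x^Tx)E_{jj}$ and, crucially, the defining constraints survive the compression since $\langle \tilde S,A'_i\rangle_l = E_{jj}\langle S,A'_i\rangle = 0$. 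Summing over $j\in[l]$ and using additivity of the quadratic module yields $(N-x^Tx)I_l\in\qm^l(\pi A)$, which is exactly Archimedeanness. This step is essentially bookkeeping once Gouveia--Netzer is in hand.

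The heart of the matter is the containment statement. My primary route is to reduce the matrix problem to the scalar Gouveia--Netzer theorem via auxiliary variables $u=(u_1,\ldots,u_l)$ and the scalar polynomial $f(x,u)=u^T F(x) u$. By compactness of $\cl\pisa$ we have $F\succeq\delta I_l$ there for some $\delta>0$, so $f$ is strictly positive on the compact set $\cl\pisa\times\Sph^{l-1}$. Describing this set by the Archimedean module $\qm(\pi A)$ (in the variables $x$) together with the ideal generated by $1-u^Tu$, a scalar Positivstellensatz produces $f(x,u)=s_0(x,u)+\langle S(x,u),A(x,0)\rangle+\lambda(x,u)(1-u^Tu)$ with $s_0\in\SOS[x,u]$, $S\in\SOS^k[x,u]$, $\langle S(x,u),A'_i\rangle=0$, and $\lambda\in\R[x,u]$. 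The final move is to translate this back: a $k\times k$ sos-matrix in $(x,u)$ that is homogeneous quadratic in $u$ is precisely $u^T(\,\cdot\,)_{ij}u$ for an element of $\SOS^{kl}[x]$, and under this correspondence $\langle S(x,u),A'_i\rangle=0$ becomes $\langle S,A'_i\rangle_l=0$, so one reads off $F=S_0+\langle S,A(x,0)\rangle_l\in\qm^l(\pi A)$.

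The main obstacle is precisely this last conversion: the certificate coming from the scalar theorem need not be homogeneous of degree two in $u$, and naively extracting its quadratic-in-$u$ part does not preserve the sum-of-squares structure. I expect to overcome this by the standard homogenization/symmetrization device --- averaging the certificate over the substitutions $u\mapsto Ou$ for $O$ orthogonal and over $u\mapsto -u$, then homogenizing in $u$ and eliminating the $(1-u^Tu)$-term using $u^Tu=1$ --- which forces the certificate into the required $u$-quadratic form. An alternative, and perhaps cleaner, route is to bypass the auxiliary variables altogether and adapt the pure-states proof of Hol--Scherer's Positivstellensatz (Proposition~\ref{prop:hol}, cf.~\cite{klep2010}) directly to the matrix quadratic module $\qm^l(\pi A)$: once this module is known to be Archimedean (second paragraph) and its positivity locus is identified as $\cl\pisa$ via the $l=1$ case, the abstract matrix Positivstellensatz applies almost verbatim, the only point needing care being that the linear side-constraints $\langle S,A'_i\rangle_l=0$ do not disturb the separation argument.
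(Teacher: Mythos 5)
Your Archimedean step is correct, and it is a genuinely different route from the paper's: the paper never invokes Gouveia--Netzer. Instead it proves directly, via SDP strong duality (strict feasibility of $A(x,y)$ guarantees dual attainment), that every affine polynomial $b$ nonnegative on $\pisa$ admits a \emph{constant} certificate $b(x)I_l = z_0I_l + \bigl\langle \oplus_{j=1}^l Z, A(x,0)\bigr\rangle_l$ with $Z\in\psd^k$, $\langle Z,A'_i\rangle=0$, which yields Archimedeanness of $\qm^l(\pi A)$ at level $l$ in one stroke (and this stronger degree-zero information is reused later, in Theorem~\ref{thm:pis-h}). Your alternative --- Proposition~\ref{prop:gouveia} for $l=1$ followed by the $E_{jj}$-compression of Proposition~\ref{prop:archimedean} --- is valid: $E_{jj}\otimes S$ is again sos and the side constraints survive, since $\langle E_{jj}\otimes S, A'_i\rangle_l = E_{jj}\,\langle S,A'_i\rangle = 0$.

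The containment half, however, has a genuine gap, and it sits exactly where you flag it. Your averaging/homogenization device cannot close it: multiplying by powers of $u^Tu$ only \emph{raises} $u$-degrees, so after symmetrizing over $u\mapsto -u$ and homogenizing you obtain an identity for $u^TF(x)u\,(u^Tu)^{t-1}$, homogeneous of degree $2t$ in $u$ (at which point, incidentally, the side constraints do become exact again, since a $u$-homogeneous polynomial lying in the ideal $(1-u^Tu)$ vanishes identically). What remains is a \emph{descent} from degree $2t$ to degree $2$: the homogeneous identity only gives an element of $\qm^N(\pi A)$, with $N$ the number of degree-$t$ monomials in $u$, representing the form $u^TFu\,(u^Tu)^{t-1}$ up to Gram ambiguity, and every natural projection back to $F$ (spherical-harmonic projection, $u$-Hessian, contraction) destroys the sos structure. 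The existence of psd biquadratic forms that are not sums of squares (Choi) warns that your quadratic-in-$u$/sos-matrix dictionary is reliable only in the exactly $u$-homogeneous-quadratic case you cannot force. (A secondary, fixable issue: your scalar step needs Jacobi's representation theorem for arbitrary Archimedean quadratic modules, since the hybrid module in $(x,u)$ is neither finitely generated nor of Putinar form; Corollary~\ref{cor:putinar} does not apply as stated.) Your fallback --- rerunning the pure-states proof on the Archimedean, conjugation-closed module $\qm^l(\pi A)$ --- is plausible but remains a one-line sketch. The paper needs none of this: having all nonnegative linear polynomials in the module, it concludes by Hol--Scherer (Proposition~\ref{prop:hol}), in effect sandwiching the compact convex set $\cl\pisa$ in a polytope $P=\{x\ |\ b_i(x)\ge 0,\ i\in[r]\}$ on which $F$ is still positive definite, so that $F\in\qm^l(\diag(b_1,\ldots,b_r))$, which is absorbed into $\qm^l(\pi A)$ by conjugation-closure (note $b\,vv^T=(e_1v^T)^T(bI_l)(e_1v^T)$). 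Notably, your first two steps already supply exactly this ingredient ($pI_l\in\qm^l(\pi A)$ for every $p>0$ on $\cl\pisa$), so your proof could be completed along the paper's lines with no scalarization and no new Positivstellensatz machinery.
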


\begin{proof}
By boundedness of $\pisa$ there exists $N\in\N$ sufficiently large such that
$N\pm x_i$ is nonnegative on $\pisa$ for all $i\in [d]$. 
We show that under the preconditions in the theorem $\qm^l(\pi A)$ contains 
every linear polynomial nonnegative on $\pisa$.
Then the quadratic module is Archimedean.

Let $b(x)=b_0+b^T x\in\R[x]_1$ be a fixed but arbitrary affine linear
polynomial nonnegative on $\pisa$. 
Consider the following primal-dual pair of SDPs. 

\vspace{-5pt}
\begin{minipage}{0.5\textwidth}
\begin{align*}
  p^* := \inf\ &\ b(x) \\
  \st\ &\ A(x,y) \succeq 0
\end{align*}
\end{minipage}
\begin{minipage}{0.45\textwidth}
\begin{align*}
  \sup\ &\ \left\langle -A_0,Z \right\rangle \\
  \st\ &\ \left\langle A_i,Z \right\rangle = b_i\ \forall i\in [d] \\
  \ &\ \left\langle A'_{i},Z \right\rangle = 0\ \forall i\in [m] \\
  \ &\ Z\in\psd^k
\end{align*}
\end{minipage}
\vspace{5pt}

\noindent Since $A(x,y)$ is strictly feasible by assumption, the dual problem 
(on the right-hand side) has optimal value $p^*-b_0$ and attains it;
see~\cite[Theorem 2.2]{deklerk2002}. 
Since $b(x)\geq 0$ on $\pisa$, we have $-b_0 \leq p^* -b_0$ and thus
\[
  b_0 -z_0 = \left\langle A_0,Z\right\rangle,\ \left\langle A_i,Z\right\rangle
 = b_i\ \forall i\in [d],\ \left\langle A'_{i},Z \right\rangle = 0\ \forall
  i\in [m]
\]
for some $Z\in\psd^k$ and $z_0 \geq 0$. Define $S(x)$ as the block diagonal 
$kl\times kl$-matrix with $l$ copies of $Z$ on its diagonal, i.e.,
$S(x) = \oplus_{j=1}^l Z$, and $S_0(x) = z_0 I_l$. Then
\begin{align*}
  S_0(x) + \left\langle S(x),A(x,0)\right\rangle_l 
  &= z_0 I_l + \bigoplus_{j=1}^l \left\langle Z,A(x,0) \right\rangle_l 
  = b(x) I_l 
\end{align*}
and 
$\langle S(x),A'_i\rangle_l = \oplus_{j=1}^l \langle Z,A'_i\rangle = 0 $ 
for $i\in [m]$. This implies $b(x)\in\qm^l(\pi A)$. 
By Hol-Scherer's Theorem, every matrix polynomial positive definite on
$\cl\pisa$ is contained in $\qm^l(\pi A)$.
\end{proof}

Theorem~\ref{thm:pispossatz} leads to a refined hierarchy for the
$\pis$-in-$\sym$ containment problem using the truncated quadratic module
\begin{equation}\label{eq:soshierarchy-proj}
  \qm_t^l(\pi A) = \left\{ S_0 + \left\langle S,A(x,0) \right\rangle_l\ |\ 
  \left\langle S,A'_{i} \right\rangle_l = 0\ \forall i\in [m],\ 
  S_0\in\SOS^l_t[x],\ S\in\SOS^{kl}_t[x] \right\} .
\end{equation}
It is evident from the definition of the quadratic modules $\qm^l(A)$ and
$\qm^l(\pi A)$ that the latter approach is preferable to the naive way from the
theoretical viewpoint (provided that $A(x,y)$ is strictly feasible).

\begin{cor}
Let $A(x,y)\in\sym^k[x,y]$ be a strictly feasible linear pencil such that
$\pisa$ is bounded and let $B(x)\in\sym^l[x]$ be a linear pencil.
\begin{enumerate}
  \item 
  $\pisa\subseteq S_B$ if and only if $B(x)+\varepsilon I_l\in\qm^l(\pi A)$ for
all $\varepsilon>0$.
  \item
  If $B(x)\succ 0$ on $\pisa$, then $B(x)\in\qm^l(\pi A)$.
\end{enumerate}
\end{cor}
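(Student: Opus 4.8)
The plan is to derive both parts directly from Theorem~\ref{thm:pispossatz}, which guarantees that $\qm^l(\pi A)$ is Archimedean and contains every matrix polynomial positive definite on $\cl\pisa$, together with the observation recorded just before that theorem that every element of $\qm^l(\pi A)$ is positive semidefinite on $\cl\pisa$. These two facts are the only inputs I would need: the role of the proof is to reconcile \emph{strict} versus \emph{non-strict} positivity and to manage the passage between $\pisa$ and its closure.

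For the forward implication of part (1), I would suppose $\pisa\subseteq S_B$, so that $B(x)\succeq 0$ for every $x\in\pisa$. Since $S_B=\{x\mid B(x)\succeq 0\}$ is a spectrahedron and hence closed, the inclusion $\pisa\subseteq S_B$ forces $\cl\pisa\subseteq S_B$, i.e. $B(x)\succeq 0$ on $\cl\pisa$. Then for each $\varepsilon>0$ one has $B(x)+\varepsilon I_l\succeq\varepsilon I_l\succ 0$ on $\cl\pisa$, so $B(x)+\varepsilon I_l$ is positive definite on $\cl\pisa$ and Theorem~\ref{thm:pispossatz} places it in $\qm^l(\pi A)$.

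For the reverse implication of part (1), I would assume $B(x)+\varepsilon I_l\in\qm^l(\pi A)$ for all $\varepsilon>0$. By the remark preceding the theorem each such element is positive semidefinite on $\cl\pisa$, so in particular $B(x)+\varepsilon I_l\succeq 0$ on $\pisa$ for every $\varepsilon>0$; letting $\varepsilon\to 0$ yields $B(x)\succeq 0$ on $\pisa$, that is, $\pisa\subseteq S_B$. Part (2) is then the specialization without the $\varepsilon$-perturbation: if $B(x)\succ 0$ on $\pisa$, then $B$ is positive definite and Theorem~\ref{thm:pispossatz} immediately gives $B(x)\in\qm^l(\pi A)$.

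The step I expect to be delicate is precisely this last specialization, because Theorem~\ref{thm:pispossatz} certifies membership only for pencils positive definite on the \emph{closure} $\cl\pisa$, whereas part (2) hypothesizes positivity merely on $\pisa$. When $\pisa$ is closed the two coincide and nothing is lost. In general one would like to upgrade $B(x)\succ 0$ on $\pisa$ to a uniform bound $B(x)\succeq\delta I_l$ on $\pisa$ for some $\delta>0$, so that strict positivity survives on $\cl\pisa$; the natural attempt uses that $x\mapsto\lambda_{\min}(B(x))$ is continuous and concave and that $\pisa$ is bounded. However, boundedness alone does not secure such a gap when $\pisa$ is not closed, since the infimum of $\lambda_{\min}(B(x))$ over $\pisa$ may be approached only in the limit at a point of $\cl\pisa\setminus\pisa$. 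I would therefore read part (2) as asserting positive definiteness on $\cl\pisa$ and invoke the theorem there; flagging this closure subtlety is, to my mind, the only genuine content beyond a direct appeal to Theorem~\ref{thm:pispossatz}.
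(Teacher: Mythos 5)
Your proof is correct and is essentially the argument the paper intends: the corollary carries no separate proof there precisely because both parts are immediate from Theorem~\ref{thm:pispossatz} together with the remark that every element of $\qm^l(\pi A)$ is positive semidefinite on $\cl\pisa$ --- in part (1) the passage to the closure via closedness of the spectrahedron $S_B$ is exactly the step you supply. Your closing caveat on part (2) is a genuine point the paper glosses over: since $\pisa$ need not be closed (it can be bounded yet non-closed when $S_A$ is unbounded, despite strict feasibility), $B(x)\succ 0$ on $\pisa$ only yields $B(x)\succeq 0$ on $\cl\pisa$ by continuity of $x\mapsto\lambda_{\min}(B(x))$, so Theorem~\ref{thm:pispossatz} applies verbatim only under the reading ``positive definite on $\cl\pisa$'' that you adopt, which is the reading consistent with the theorem's hypotheses.
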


The 0-th step of the hierarchy based on~\eqref{eq:soshierarchy-proj} is
exactly the sufficient containment criterion stated in
Theorem~\ref{thm:inclusion-proj}.
Indeed, for $t=0$, the constant sos-matrix $S$ equals the positive
semidefinite matrix $C$ after permuting rows and columns simultaneously.

\begin{prop} \label{prop:sdfp_sos_proj}
Let $A(x,y)\in\sym^k[x,y]$ and $B(x)\in\sym^l[x]$ be linear pencils. Assume 
$A(x,y)$ is strictly feasible. The following are equivalent.
\begin{enumerate}
  \item
  $B(x)\in\qm^l_0(\pi A)$.
  \item 
  There exist $C'\in\psd^{kl}$ and $C'_0\in\psd^l$ such that
  \begin{equation*}
    B_0 = C'_0 + \left\langle A_0,C'\right\rangle_l,\ 
  B_{p} = \left\langle A_p, C' \right\rangle_l\ \forall p\in [d],\
  0 = \left\langle A'_{q}, C' \right\rangle_l\ \forall q\in [m] .
  \end{equation*}
  \item There exist $C\in\psd^{kl},\ C_0\in\psd^l$ such that 
  \begin{equation*}
  B_0 = C_0 + \sum_{i,j=0}^k (A_0)_{ij} C_{ij},\ 
  B_p = \sum_{i,j=0}^k (A_p)_{ij} C_{ij}\ \forall p\in [d],\ 
  0 = \sum_{i,j=0}^k (A'_q)_{ij} C_{ij}\ \forall q\in [m] .
  \end{equation*}
\end{enumerate}
\end{prop}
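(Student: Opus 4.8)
The plan is to treat all three equivalences as purely linear-algebraic identities; strict feasibility of $A(x,y)$ is inherited from the surrounding context but is not actually needed for the equivalence itself. The work is split into $(1)\Leftrightarrow(2)$, which is an unwinding of the definition of $\qm^l_0(\pi A)$ together with a comparison of coefficients, and $(2)\Leftrightarrow(3)$, which is a single simultaneous permutation of the $kl$ rows and columns relating two opposite block conventions.

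For $(1)\Leftrightarrow(2)$ I would use that at level $t=0$ the sos-matrices appearing in \eqref{eq:soshierarchy-proj} are exactly the constant positive semidefinite matrices, so $B(x)\in\qm^l_0(\pi A)$ means there exist $S_0\in\psd^l$ and $S\in\psd^{kl}$ with $\langle S,A'_q\rangle_l=0$ for all $q\in[m]$ and $B(x)=S_0+\langle S,A(x,0)\rangle_l$. Writing $A(x,0)=A_0+\sum_{p=1}^d x_p A_p$ and using linearity of the $l$th scalar product in its matrix argument, the right-hand side becomes $S_0+\langle S,A_0\rangle_l+\sum_{p=1}^d x_p\langle S,A_p\rangle_l$. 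Since both sides are affine pencils in $\sym^l[x]$, comparing the constant term against the coefficient of each $x_p$ gives precisely the system of (2) with $C'=S$ and $C'_0=S_0$ (using the symmetry $\langle S,N\rangle_l=\langle N,S\rangle_l$); the reverse direction reads the same identities backwards to reassemble a member of $\qm^l_0(\pi A)$.

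For $(2)\Leftrightarrow(3)$ I would make the two block structures explicit: $C'$ is an $l\times l$ array $(C'_{ab})_{a,b=1}^l$ of $k\times k$ blocks, whereas $C$ is a $k\times k$ array $(C_{ij})_{i,j=1}^k$ of $l\times l$ blocks, and I would relate them by the entrywise prescription $(C'_{ab})_{ij}=(C_{ij})_{ab}$. This is exactly the commutation (perfect-shuffle) permutation sending the compound index $(a,i)$ to $(i,a)$, applied simultaneously to rows and columns; being a permutation similarity it is orthogonal, so $C'\succeq 0\Leftrightarrow C\succeq 0$. Under this correspondence I then verify entrywise that for each coefficient matrix $N\in\{A_0,\dots,A_d,A'_1,\dots,A'_m\}$ the $(a,b)$-entry of $\langle N,C'\rangle_l$ equals $\sum_{i,j=1}^k (C'_{ab})_{ij}N_{ij}=\sum_{i,j=1}^k N_{ij}(C_{ij})_{ab}$, which is the $(a,b)$-entry of $\sum_{i,j=1}^k N_{ij}C_{ij}$. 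Hence the linear systems of (2) and (3) are literally the same under $C\leftrightarrow C'$ and $C_0=C'_0$, completing the equivalence.

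The only point requiring care is the bookkeeping: keeping the two opposite block conventions apart and nailing down the single entrywise identity that converts the $l$th scalar product $\langle N,C'\rangle_l$ into the block sum $\sum_{i,j}N_{ij}C_{ij}$. There is no analytic difficulty anywhere; once the relabeling is set up correctly, everything else is a direct comparison of coefficients.
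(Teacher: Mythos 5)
Your proof is correct and follows essentially the same route as the paper, which establishes $(1)\Leftrightarrow(2)$ by unwinding the degree-$0$ truncation of $\qm^l(\pi A)$ into an SDFP with coefficient comparison, and $(2)\Leftrightarrow(3)$ by a simultaneous permutation of rows and columns exchanging the two block conventions --- you have merely supplied the bookkeeping (the perfect-shuffle identity and its orthogonality) that the paper defers to the thesis reference. Your side observation that strict feasibility of $A(x,y)$ is never actually used in the equivalence is also accurate; that hypothesis is inherited from the surrounding hierarchy results rather than needed here.
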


\begin{proof}
The equivalence of (1) and (2) follows from the definition of the truncated
quadratic module by rewriting it as an SDFP.
Applying a simultaneous permutation of the rows and columns of $C'$ in (2)
(resp. of $C$ in (3)), the linear systems can easily be transformed.

For details in the non-projected case see~\cite[Theorem 5.1.11]{kphd}
\end{proof}

The proof of Theorem~\ref{thm:pispossatz} evidently yields necessity for the
$\pis$-in-$\HH$ containment problem. This, in particular, shows the
(theoretical) effectiveness of the approach based on
Theorem~\ref{thm:pispossatz}.

\begin{thm} \label{thm:pis-h}
Let $A(x,y)\in\sym^k[x,y]$ be a strictly feasible linear pencil and let the
coefficients of the linear pencil $B(x)\in\sym^l[x]$ be simultaneously 
congruent to a diagonal matrix.
\begin{enumerate}
  \item
  $\pisa\subseteq S_B$ if and only if $B(x)\in\qm^l_0(\pi A)$.
  \item
  Assume $S_B$ is a polytope with nonempty interior. Then
  $\pisa\subseteq S_B$ if and only if $B(x)\in\qm^l_0(\pi A)$ with $S_0=0$.
\end{enumerate}
In particular, the statements (1) and (2) hold for a diagonal linear pencil
$B(x)$, i.e., a polyhedron in normal form~\eqref{eq:normalform}.
\end{thm}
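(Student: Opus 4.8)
The plan is to reduce to the case of a \emph{diagonal} pencil $B(x)$ and then transfer the scalar argument from the proof of Theorem~\ref{thm:pispossatz} entrywise. First I would record that simultaneous congruence leaves both sides of the asserted equivalence invariant: if $T$ is invertible with $T^T B_p T$ diagonal for all $p$, then $S_B = S_{T^T B T}$ since congruence preserves semidefiniteness, and membership in $\qm^l_0(\pi A)$ is preserved under $B\mapsto T^T B T$. The latter follows from the identity $\langle (T\otimes I_k)^T S (T\otimes I_k),N\rangle_l = T^T\langle S,N\rangle_l T$ (with $S\in\psd^{kl}$, so $(T\otimes I_k)^T S(T\otimes I_k)\succeq 0$), together with $S_0\mapsto T^T S_0 T$; in particular an $S_0=0$ representation is carried to an $S_0=0$ representation, which is what part~(2) needs. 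Hence I may assume $B(x)=\diag(b_1(x),\dots,b_l(x))$ with affine $b_r$, which is the polyhedral normal form and also settles the ``in particular'' clause.

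For the easy implication I would argue directly, as in Theorem~\ref{thm:inclusion-proj} (cf. Proposition~\ref{prop:sdfp_sos_proj}). Writing $B(x)=S_0+\langle S,A(x,0)\rangle_l$ with $\langle S,A'_i\rangle_l=0$, for $x\in\pisa$ pick $y$ with $A(x,y)\succeq 0$; the kernel conditions absorb the $y$-terms, so $B(x)=S_0+\langle S,A(x,y)\rangle_l$, and $\langle S,A(x,y)\rangle_l\succeq 0$ because $S,A(x,y)\succeq 0$. Thus $\pisa\subseteq S_B$. For necessity in part~(1) I would run the primal--dual semidefinite program from the proof of Theorem~\ref{thm:pispossatz} separately for each diagonal entry: since $b_r\geq 0$ on the nonempty set $\pisa$ and $A(x,y)$ is strictly feasible, strong duality yields $Z^{(r)}\succeq 0$ and $z_0^{(r)}\geq 0$ with $b_r=z_0^{(r)}+\langle A(x,0),Z^{(r)}\rangle$ and $\langle A'_i,Z^{(r)}\rangle=0$; notably this uses neither boundedness of $S_A$ nor of $\pisa$. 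Assembling $S=\blkdiag(Z^{(1)},\dots,Z^{(l)})$ and $S_0=\diag(z_0^{(1)},\dots,z_0^{(l)})$ gives $B(x)=S_0+\langle S,A(x,0)\rangle_l\in\qm^l_0(\pi A)$.

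The crux, and the step I expect to be hardest, is part~(2): eliminating the constant $S_0$ under the hypothesis that $S_B$ is a bounded polytope with nonempty interior. This is the spectrahedral analogue of the Stiemke step in Theorem~\ref{thm:pih-h}(2). The idea is to manufacture a single $W\succeq 0$ representing a positive constant, i.e. with $\langle A_p,W\rangle=0$ for all $p\in[d]$, $\langle A'_i,W\rangle=0$ for all $i\in[m]$, and $\langle A_0,W\rangle>0$. Boundedness of $S_B$ forces the gradients $\gamma_r$ of the $b_r$ to satisfy a relation $\sum_r\theta_r\gamma_r=0$ with $\theta_r\geq 0$ and $\theta_r>0$ at some index where $\gamma_r\neq 0$ (equivalently $0\in\inter\conv\{\gamma_r\}$). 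Setting $W=\sum_r\theta_r Z^{(r)}$ then annihilates every $x$-coefficient of $\langle A(x,0),W\rangle$ while preserving the $A'_i$-conditions, and $W\neq 0$ because $\gamma_r\neq 0$ forces $Z^{(r)}\neq 0$.

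It remains to close the loop. After normalizing $W$ so that $\langle A(x,0),W\rangle\equiv 1$, I rewrite $b_r=\langle A(x,0),\,Z^{(r)}+z_0^{(r)}W\rangle$, replacing each $Z^{(r)}$ by a positive semidefinite matrix that still annihilates the $A'_i$ and removing the constant; the reassembled block-diagonal certificate then has $S_0=0$. The two delicate points to verify carefully are that boundedness genuinely produces the relation with the correct support (so that $W\neq 0$), and that strict feasibility of $A(x,y)$ is precisely what upgrades $W\neq 0$ to $\langle A_0,W\rangle=\langle A(x_0,y_0),W\rangle>0$ at a strictly feasible point $(x_0,y_0)$, since $W\succeq 0$, $W\neq 0$ pairs strictly positively with a positive definite matrix. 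Part~(2) then follows, and the statement for a diagonal $B(x)$ is the special case already fixed in the reduction.
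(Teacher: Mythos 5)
Your proposal is correct, and while your reduction to a diagonal pencil and your treatment of part~(1) essentially coincide with the paper's proof (the paper invokes the congruence invariance of the degree-zero SDFP by citation, where you verify it inline via $\langle (T\otimes I_k)^T S (T\otimes I_k),N\rangle_l = T^T\langle S,N\rangle_l T$, and it likewise runs the dual SDP from the proof of Theorem~\ref{thm:pispossatz} once per diagonal entry and assembles $S=\bigoplus_q Z^q$, $S_0=\bigoplus_q z_0^q$), your part~(2) takes a genuinely different route. The paper sets $S_0=0$ in the part-(1) certificate, observes that this certifies $\pisa\subseteq S_{B'}$ for the shrunken polytope with constants lowered by $z_0^q$, and then concludes via $S_{B'}\subseteq S_B$, the exactness with $C_0=0$ of the degree-zero criterion for polytope-in-polytope containment (Corollary~\ref{cor:pih-h}(2)), and transitivity of that criterion (auxiliary lemmas adapted from the non-projected setting). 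You instead transplant the Stiemke step of Theorem~\ref{thm:pih-h}(2) directly to the semidefinite side: boundedness of the nonempty $S_B$ forces $\rec(S_B)=\{0\}$, hence a relation $\sum_r\theta_r\gamma_r=0$ with $\theta_{r_0}>0$ at some $\gamma_{r_0}\neq 0$; then $W=\sum_r\theta_r Z^{(r)}$ is positive semidefinite, nonzero (since $\langle A_i,Z^{(r_0)}\rangle=b_i^{r_0}\neq 0$ for some $i$), annihilates all $A_p$ and $A'_j$, and strict feasibility upgrades $W\neq 0$ to $\langle A_0,W\rangle=\langle A(x_0,y_0),W\rangle>0$, so after normalization each $Z^{(r)}+z_0^{(r)}W$ absorbs its constant. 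Your route buys self-containedness --- no appeal to transitivity lemmas or to Corollary~\ref{cor:pih-h} --- and it quietly sidesteps a wrinkle in the paper's argument, namely that Corollary~\ref{cor:pih-h}(2) requires the intermediate polytope $S_{B'}$ to be a non-singleton, a hypothesis the paper does not verify; it also makes visible that nonempty interior of $S_B$ is not really needed beyond nonemptiness of $S_B$. What the paper's route buys is brevity through reuse of the already-established polyhedral exactness machinery. Both arguments draw on the same geometric source, the triviality of the recession cone of the bounded $S_B$: your $W$ is precisely the semidefinite avatar of the vector $\lambda$ produced by Stiemke's theorem in the proof of Theorem~\ref{thm:pih-h}(2).
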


In order to prove Theorem~\ref{thm:pis-h}, we use natural adaptions of
auxiliary results on the behavior of the sufficient containment criterion with
regard to block diagonalization and transitivity as shown in~\cite{ktt1} to
the non-projected setting.
Using Proposition~\ref{prop:sdfp_sos_proj}, it is easy to verify the validity
of these statements.

\begin{proof}
As for $t=0$ the resulting SDFP is invariant under non-singular congruence
transformations of $B(x)$ (see~\cite[Lemma 5.1.14]{kphd}), we can retreat to
the normal form~\eqref{eq:normalform} 
$B(x) = \bigoplus_{q=1}^l b^q(x)\in\sym^l[x]$ with $b^q(x) = b_0^q +x^T b^q$
for $q\in[l]$.
Denote by $b_0^q,b_1^q,\ldots,b_d^q$ the coefficients of the linear form 
$ b^q(x) = (b_0+Bx)_q $. Set $ b^q := (b_1^q, \ldots, b_d^q) $.

The proof of Theorem~\ref{thm:pispossatz} yields certificates
\[
  b_0^q -z_0^q = \left\langle A_0,Z^q \right\rangle,\ 
  \left\langle A_i,Z^q \right\rangle = b_i^q\ \forall i\in [d],\ 
  \left\langle A'_{i},Z^q \right\rangle = 0\ \forall i\in [m]
\]
for some $Z^q \in\psd^k$ and $z_0^q \geq 0$.
Setting $S(x)=\bigoplus_{q=1}^l Z^q$ and $S_0(x)=\bigoplus_{q=1}^l z_0^q$, 
this implies part (1) of the statement. 

To prove the second part, let $S(x)$ as before and set $S_0(x)$ to be zero. 
Then 
\begin{align*}
  \left\langle S(x),A(x,0)\right\rangle_l 
  = \bigoplus_{q=1}^l \left\langle A(x,0),Z^q \right\rangle_l 
  = \bigoplus_{q=1}^l \left( f_0-z_0^q + \sum_{i=1}^d b_i^q x_i \right)
\end{align*}
certifies the containment $\pisa\subseteq S_{B'}$, where $B'(x)$ is defined as
\[
  B'(x) = \bigoplus_{q=1}^{l} \left(r^q+\sum_{p=1}^d x_p \right) .
\]
Assuming that $S_B$ is a polytope, we have $S_{B'}\subseteq S_B$ and thus, by
transitivity and exactness of the initial hierarchy step for
polytopes, see Corollary~\ref{cor:pih-h}, there is a certificate for the
containment question $\pisa\subseteq S_B$ of degree zero with $S_0(x)=0$.
\end{proof}

As a special case of Theorem~\ref{thm:pispossatz}, we gain a
Positivstellensatz for polynomials on projected polyhedra having boundedness
as its only precondition.

\begin{prop}\label{prop:pihpossatz}
Let $P_A = \{(x,y)\in\R^{d+m}\ |\ a+Ax+A'y \ge 0 \}$ be a nonempty
polyhedron such that $\pipa$ is bounded. The quadratic module 
\[
  \qm^1(\pi,A) = \left\{ s_0 + \sum_{i=1}^k s_i(x) (a+Ax)_i\ |\ 
  \sum_{i=1}^k s_i(x) (A'_{i,j})= 0\ \forall j\in [m],\ 
  s_0,\ldots,s_k\in\SOS[x] \right\}
\]
is Archimedean and contains every polynomial positive on $\pipa$.
\end{prop}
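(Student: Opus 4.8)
The plan is to derive this as the $l=1$ instance of Theorem~\ref{thm:pispossatz} applied to the diagonal normal form of $P_A$, with the single adjustment that the strict feasibility hypothesis---used there only to establish the Archimedean property---is circumvented by the affine Farkas' Lemma. First I would realize $P_A$ through its normal form pencil $A(x,y)=\diag(a_1(x,y),\dots,a_k(x,y))$ from~\eqref{eq:normalform}, where $a_i(x,y)=(a+Ax+A'y)_i$, so that $\pipa=\pisa$ is bounded by hypothesis. For this diagonal pencil and $l=1$ the module $\qm^1(\pi A)$ of~\eqref{eq:qm-matrix-proj} coincides with $\qm^1(\pi,A)$: only the diagonal of an sos-matrix $S\in\SOS^k[x]$ enters the scalar products $\langle S,A(x,0)\rangle=\sum_i S_{ii}(a+Ax)_i$ and $\langle S,A'_j\rangle=\sum_i S_{ii}(A')_{ij}$, the diagonal entries $s_i:=S_{ii}$ are themselves sos-polynomials, and conversely $\diag(s_1,\dots,s_k)$ is an sos-matrix whenever the $s_i$ are sos. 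Thus the assertion is exactly the scalar case of Theorem~\ref{thm:pispossatz}, and the only obstruction to citing that theorem directly is that a merely nonempty polyhedron need not be strictly feasible.

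The key step, and the place where the polyhedral setting genuinely helps, is the proof of Archimedeanness. In Theorem~\ref{thm:pispossatz} the representation of an affine polynomial nonnegative on the projection is obtained from strong SDP duality, which requires strict feasibility; here I would instead use the affine Farkas' Lemma (Lemma~\ref{lem:farkas_aff}), which needs only nonemptiness of $P_A$. Concretely, let $b(x)=b_0+b^Tx$ be an arbitrary affine polynomial nonnegative on $\pipa$. Viewed as a function on $P_A\subseteq\R^{d+m}$ that is constant in $y$, it is nonnegative on $P_A$, so Lemma~\ref{lem:farkas_aff} yields nonnegative constants $c_0,\dots,c_k$ with $b(x)=c_0+\sum_{i=1}^k c_i(a+Ax+A'y)_i$ as an identity in $(x,y)$. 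Comparing the coefficient of each $y_j$ gives $\sum_i c_i(A')_{ij}=0$, while the remaining terms give $b(x)=c_0+\sum_i c_i(a+Ax)_i$. Taking $s_0=c_0$ and $s_i=c_i$ (nonnegative constants, hence sos) exhibits $b\in\qm^1(\pi,A)$. Since $\pipa$ is bounded, $N\pm x_i$ is nonnegative on $\pipa$ for $N$ large, so it lies in $\qm^1(\pi,A)$ for every $i$, whence the module is Archimedean exactly as in the proof of Theorem~\ref{thm:pispossatz}.

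Once Archimedeanness is in hand, the remainder follows verbatim from the proof of Theorem~\ref{thm:pispossatz}, which is the part that does not invoke strict feasibility. Applying the Farkas step to each defining halfspace of the polyhedron $\pipa$ shows that those halfspaces lie in $\qm^1(\pi,A)$, so the nonnegativity set of $\qm^1(\pi,A)$ is contained in $\pipa$; and since every element of the module is nonnegative on $\cl\pipa=\pipa$, the two sets coincide. The Archimedean Positivstellensatz underlying Proposition~\ref{prop:hol} (for $l=1$ the scalar form of Hol--Scherer, i.e.\ Putinar's theorem) then places every polynomial positive on $\pipa$ into $\qm^1(\pi,A)$, which is the claim. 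The main obstacle is therefore the failure of strict feasibility for general nonempty polyhedra; recognizing that it enters the proof of Theorem~\ref{thm:pispossatz} only through the Archimedean step, and replacing SDP duality by the affine Farkas' Lemma there, is precisely what allows boundedness to serve as the sole hypothesis.
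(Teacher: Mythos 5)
Your proposal is correct and matches the paper's own (very brief) proof in substance: the paper proves Proposition~\ref{prop:pihpossatz} by ``retreating to diagonal pencils'' in the proof of Theorem~\ref{thm:pispossatz} and replacing SDP duality (which needs strict feasibility) by LP strong duality \cite[Corollary 7.1g]{Schrijver1986}, and your use of the affine Farkas' Lemma (Lemma~\ref{lem:farkas_aff}, i.e.\ \cite[Corollary 7.1h]{Schrijver1986}) is the same tool in certificate form, applied to the same diagonal-normal-form reduction with the same Archimedean-via-$N\pm x_i$ argument. Your write-up is simply a fully fleshed-out version of the paper's two-line sketch, including the (correct and worthwhile) extra care in checking that the nonnegativity set of $\qm^1(\pi,A)$ equals $\pipa$ before invoking the Archimedean representation theorem.
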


\begin{proof}
The proof follows from the proof of Theorem~\ref{thm:pispossatz} by retreating
to diagonal pencils and the fact that strong duality holds for linear
programming~\cite[Corollary 7.1g]{Schrijver1986}.
\end{proof}

\subsection{Examples}\label{sec:examples}

We discuss some academic examples for the hierarchy stated
in~\eqref{eq:soshierarchy-proj}.
All computations are made on a desktop computer with Intel Core i3-2100 @ 3.10
GHz and 4 GB of RAM.
In the tables, ``time'' states the time in seconds for setting up the
problem in YALMIP~\cite{YALMIP} and solving it with Mosek~\cite{andersen2000}.

In the examples we consider the optimization version
of~\eqref{eq:soshierarchy-proj}
\begin{align*}
  \mu(t)\ =\ \sup\ &\ \mu \\ \st\ &\ B(x)-\mu I_l \in\qm^l_t(\pi A) .
\end{align*}
Letting $t$ tend to infinity, the sequence of optimal values $\mu(t)$
converges to the value 
$\mu^* = \sup\{ \mu\ |\ B(x)-\mu I_l \succeq 0\ \forall x\in \pisa\}$ 
which is nonnegative if and only if $\pisa\subseteq S_B$. Thus a nonnegative
value $\mu(0)$ states the existence of a containment certificate with $t=0$.

A $d$-dimensional \emph{ball} is a spectratope $S_A$ given by the linear pencil
\begin{equation} \label{eq:ellipsoid}
  A(x) = I_{d+1} + \sum_{p=1}^{d} \frac{x_p}{r}(E_{p,d+1}+E_{d+1,p})
  \in\sym^{d+1}[x]
\end{equation}
with $r>0$.

\begin{table}
 \begin{tabular}{lll|rl}
  \toprule
  $\pisa$ & $rS_B$ & $r$ & time & $\ \mu(0)$\\
  \midrule
  two disks & 2-ball
    & 1.99 & 0.7978 & $ -0.0050 $ \\
  & & 2    & 0.8215 & \hspace*{8pt}$ 5.9978\cdot 10^{-08} $ \\
  & & 2.01 & 0.9173 & \hspace*{5pt}$ 0.0050$ \\ 
  \toprule
  $ S_A $ & $rS_B$ &     &  &   \\
  \midrule
   & 3-ball
    & 2.23 & 0.8690 & $ -0.0027 $ \\
  & & 2.2361 & 0.7470 & \hspace*{9pt}$ 1.4339\cdot 10^{-05}$ \\
  & & 2.24 & 0.8803 &\hspace*{5pt} $ 0.0018 $ \\
  \bottomrule
 \end{tabular}
\\[+1ex]
\caption{Computational test of containment as described in
Example~\ref{ex:twodisks}.}
\label{tab:twodisks}
\end{table}

\begin{figure}[ht]
\centering
  \includegraphics[width=0.25\textwidth]{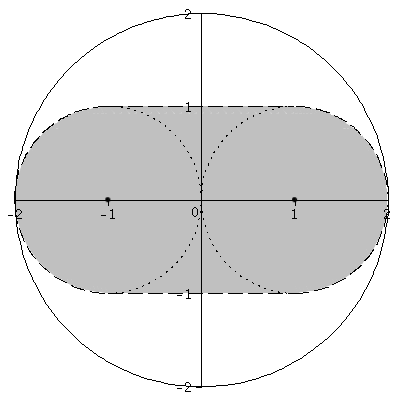}
  \caption{The convex hull of two disks in a $2$-ball as stated in
Example~\ref{ex:twodisks}.}
\label{fig:twodisks-ball}
\end{figure}

\begin{ex}\label{ex:twodisks}
Consider the convex hull of two disks $\pisa$ as defined in
Example~\ref{ex:pispis} and the 2-ball of radius $r>0$.
It follows from the construction of $\pisa$ that it is centrally symmetric and
that its circumradius is 2; see Figure~\ref{fig:twodisks-ball}.
Up to numerical accuracy, this value is computed by our approach; see
Table~\ref{tab:twodisks}.
\end{ex}

\begin{ex}\label{ex:tvscreen}
Consider the TV screen $\pisa$ as defined in Example~\ref{ex:pispis}.
Note that while the TV screen is centrally symmetric (as its boundary equals 
the variety defined by the polynomial $1-x_1^4-x_2^4$), its defining 
spectrahedron is not (as the point $(1,0,1,0)$ is contained in $S_A$ but its 
negative $(-1,0,-1,0)$ is not).

As one can see in Table~\ref{tab:tvscreen}, the circumradius of the TV screen
is at most $\sqrt[4]{2}\approx 1.1892$, while the ``centrally symmetric
circumradius'' of $S_A$ is at most $\sqrt{\sqrt{2}+1}\approx 1.5538$.

Actually, the computed values for the (centrally symmetric) circumradius are
exact. For 
$ p = \left(\frac{1}{\sqrt[4]{2}},\frac{1}{\sqrt[4]{2}},
  \frac{1}{\sqrt{2}},\frac{1}{\sqrt{2 }}\right) \in S_A$ 
we have $\|\pi(p)\|_2 = \sqrt[4]{2}$ and $\|p\|_2=\sqrt{\sqrt{2}+1}$, implying
that the circumradius of the TV screen is at least $\sqrt[4]{2}$ and that
$\sqrt{\sqrt{2}+1}$ is the smallest possible radius of a ball (centered at the
origin) containing $S_A$.
\end{ex}

\begin{table}
 \begin{tabular}{lll|rl}
  \toprule
  $\pisa$ & $rS_B$ & $r$ & time & $\ \mu(0)$\\
  \midrule
  TV screen & 2-ball & 1.18 & 0.8514 & $ -0.0078 $ \\
  & & $\sqrt[4]{2}$ & 2.0564 & $ -1.3621\cdot 10^{-08} $ \\
  & & 1.19          & 0.9964 & \hspace*{9pt}$ 6.6628\cdot 10^{-04} $ \\ 
  & & 1.2           & 0.9854 & \hspace*{5pt} $ 0.0090 $ \\ 
  \toprule
  $ S_A $ & $rS_B$ &         &  &   \\
  \midrule
   & 4-ball & 1.55 & 1.1036 & $ -0.0024 $ \\
  & & $\sqrt{\sqrt{2}+1}$ & 0.9373 & \hspace*{9pt}$3.1037\cdot 10^{-09}$ \\
  & & 1.56 & 1.0723 &\hspace*{5pt} $ 0.0040 $ \\
  \bottomrule
 \end{tabular}
\\[+1ex]
\caption{Computational test of containment as described in
Example~\ref{ex:tvscreen}.}
\label{tab:tvscreen}
\end{table}
%

\subsection{Containment of Projected Spectrahedra and Positive Linear Maps}
\label{sec:pispispos}

We discuss an extension of the connection between positive linear maps and
containment of spectrahedra (as introduced by Helton et al.~\cite{Helton2010};
see also~\cite{ktt2}) to projected spectrahedra.

Given two linear pencils $A(x,y)\in\sym^k[x,y]$ and $B(x)\in\sym^l[x]$ with
$y=(y_1,\ldots,y_m)$ define the linear subspaces
\begin{align*}
  \Acal = \linspan\{ A_0,\ldots,A_d,A'_{1},\ldots,A'_{m} \}\ \text{and}\ 
  \Bcal = \linspan\{ B_0,\ldots,B_d \}. 
\end{align*}
Every element in $\Acal$ can be associated to a homogeneous linear pencil
$ A(x_0,x,y) \in\sym^k[x_0,x,y] $ ($A_0$ being the coefficient of $x_0$). 
The linear pencil
\begin{align*}
  \widea(x_0,x,y) 
  &:= x_0 (1\oplus A_0) + \sum_{p=1}^{d} x_p (0 \oplus A_p) 
    + \sum_{q=1}^{m} y_q (0 \oplus A'_{q})
\end{align*}
is called the \emph{extended linear pencil} associated to $A(x_0,x,y)$. The
associated linear subspace is 
$ \widema = \linspan\{ 1\oplus A_0,0 \oplus A_1,\ldots,0 \oplus A_d,0 \oplus
  A'_1,\ldots,0 \oplus A'_{m} \} $.

For linearly independent $A_1,\ldots,A_d,A'_{1},\ldots,A'_{m}$, let 
$ \widephi_{AB}:\ \widema\rightarrow\Bcal $ be the linear map defined by
\[
  \widephi_{AB} (1 \oplus A_0) = B_0,\ 
  \widephi_{AB} (0 \oplus A_p) = B_p \ \forall p \in [d] ,\ 
  \widephi_{AB} (0 \oplus A'_{p}) = 0 \ \forall p \in [m] .
\]
Since every linear combination 
$ 0 = \lambda_0 (1\oplus A_0) + \sum_{p=1}^{d} \lambda_p (0\oplus A_p) + 
\sum_{q=1}^{m} \lambda_{d+q} (0\oplus A'_q)$ for
real scalars $\lambda_0,\ldots,\lambda_{d+m}$ yields $\lambda_0 = 0$, it
suffices to assume the linear independence of the coefficient matrices
$A_1,\ldots,A_d,A'_{1},\ldots,A'_{m}$ to ensure that $\widephi_{AB}$ is 
well-defined. 
If $A_0,A_1,\ldots,A_{d},A'_1,\ldots,A'_m$ are linearly independent, then we
can retreat to the simpler map $\Phi_{AB}:\ \Acal\rightarrow\Bcal$ defined by 
\begin{align*}
  &\Phi_{AB}(A_p) = B_p \ \forall p \in [d]\text{ and }
  \Phi_{AB}(A'_{q}) = 0 \ \forall q \in [m] .
\end{align*}

The next theorem extends the key connection between operator theory and
containment of spectrahedra to the setting of projections of spectrahedra.

\begin{thm} \label{thm:pispos}
Let $A(x,y)\in\sym^k[x,y]$ and $B(x)\in\sym^l[x]$ be linear pencils. 
\begin{enumerate}
  \item 
  If $\Phi_{AB}$ or $\widephi_{AB}$ is positive, then $\pisa\subseteq S_B$.
  \item
  If $\pisa\neq\emptyset$, then $\pisa\subseteq S_B$ implies positivity of
  $\widephi$.
  \item
  If $\pisa\neq\emptyset$ and $S_A$ is bounded, then $\pisa\subseteq S_B$
  implies positivity of $\Phi$.
\end{enumerate}
\end{thm}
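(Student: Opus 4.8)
The plan is to prove the three parts separately, in each case reducing matters to the single evaluation identity that, by construction of the two maps, $\Phi_{AB}(A(x,y)) = B(x)$ and $\widephi_{AB}(\widea(1,x,y)) = B(x)$ (the coefficients $A'_q$ of the projection variables being sent to $0$). For part (1) I would argue sufficiency directly: given $x\in\pisa$, pick a witness $y$ with $A(x,y)\succeq 0$; then $A(x,y)\in\Acal\cap\psd^k$ and $\widea(1,x,y)=1\oplus A(x,y)\in\widema\cap\psd^{k+1}$. Applying positivity of $\Phi_{AB}$ (resp.\ $\widephi_{AB}$) to these matrices and invoking the evaluation identity yields $B(x)\succeq 0$, i.e.\ $x\in S_B$.

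The necessity statements (2) and (3) are where the work lies, and both hinge on a case analysis according to the coefficient $\lambda_0$ of $A_0$ in a generic positive semidefinite element of the relevant subspace. For part (2), write $M\in\widema\cap\psd^{k+1}$ as $M=\lambda_0(1\oplus A_0)+\sum_p\lambda_p(0\oplus A_p)+\sum_q\mu_q(0\oplus A'_q)$; the $(1,1)$-entry of $M$ is exactly $\lambda_0$, so positive semidefiniteness forces $\lambda_0\geq 0$ for free. If $\lambda_0>0$, rescaling by $1/\lambda_0$ (legitimate since $\psd^{k+1}$ is a cone) exhibits $\lambda/\lambda_0\in\pisa$, whence $\pisa\subseteq S_B$ gives $B(\lambda/\lambda_0)\succeq 0$ and therefore $\widephi_{AB}(M)=\lambda_0 B(\lambda/\lambda_0)\succeq 0$. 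The delicate boundary case is $\lambda_0=0$, where $M=0\oplus\widetilde{A}(\lambda,\mu)$ with pure-linear part $\widetilde{A}(\lambda,\mu)\succeq 0$, so $(\lambda,\mu)$ is a recession direction of $S_A$. Here I would use nonemptiness to fix $(x^0,y^0)\in S_A$, note that $x^0+t\lambda\in\pisa\subseteq S_B$ for all $t\geq 0$, so $B(x^0)+t\widetilde{B}(\lambda)\succeq 0$ (with $\widetilde{B}$ the pure-linear part of $B$), and let $t\to\infty$ (dividing by $t$ and using closedness of $\psd^l$) to conclude $\widephi_{AB}(M)=\widetilde{B}(\lambda)\succeq 0$.

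For part (3) the same dichotomy applies once positivity of $\Phi_{AB}$ is unwound, but now the sign of $\lambda_0$ is no longer encoded in $M\in\Acal\cap\psd^k=\{\lambda_0 A_0+\widetilde{A}(\lambda,\mu)\}$, so the first and main obstacle is to establish $\lambda_0\geq 0$. This is exactly where boundedness of $S_A$ (equivalently, by Lemma~\ref{lem:spec_bounded}, nonemptiness together with the trivial recession cone $\{w:\widetilde{A}(w)\succeq 0\}=\{0\}$) and linear independence of $A_0,A_1,\ldots,A_d,A'_1,\ldots,A'_m$ enter. I would argue by contradiction: if $\lambda_0<0$, then adding $|\lambda_0|A(x^0,y^0)\succeq 0$ for a fixed $(x^0,y^0)\in S_A$ cancels the $A_0$-term and produces $\widetilde{A}(\lambda+|\lambda_0|x^0,\mu+|\lambda_0|y^0)\succeq 0$; triviality of the recession cone forces $\lambda=-|\lambda_0|x^0$ and $\mu=-|\lambda_0|y^0$, so $M=-|\lambda_0|A(x^0,y^0)$ is simultaneously positive and negative semidefinite, hence $M=0$ — contradicting $\lambda_0\neq 0$ by linear independence. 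Once $\lambda_0\geq 0$ is in hand, the $\lambda_0>0$ case proceeds as in part (2), and the $\lambda_0=0$ case is immediate: $\widetilde{A}(\lambda,\mu)\succeq 0$ forces $(\lambda,\mu)=0$ by boundedness, so $\Phi_{AB}(M)=0$. The main difficulty throughout is thus the boundary behaviour at $\lambda_0=0$ and the sign control on $\lambda_0$ in part (3); everything else is linearity and rescaling.
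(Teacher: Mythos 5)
Your proposal is correct and takes essentially the same route as the paper: the same case analysis on the coefficient $\lambda_0$ of $A_0$ (with $\lambda_0\geq 0$ read off the $(1,1)$-entry of the extended pencil in part (2)), the same rescaling by $1/\lambda_0$ in the positive case, the same recession-direction limit argument with closedness of $\psd^l$ when $\lambda_0=0$, and the same use of boundedness of $S_A$ together with linear independence to force $M=0$ when $\lambda_0\leq 0$ in part (3). The only cosmetic difference is that you package the $\lambda_0<0$ exclusion as a direct contradiction via the trivial recession cone and the fact that $M$ is simultaneously positive and negative semidefinite, whereas the paper phrases the same step in terms of an ``improving ray''; the substance is identical.
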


\begin{proof}
\ 

(1):
Let $\Phi_{AB}$ be positive. For every $x\in\pisa$ there exists $y\in\R^m$
such that $A(x,y)\succeq 0$, i.e., $A(x,y)\in\psd^k\cap\Acal$.
Then 
$ B(x) = B(x)+\sum_{q=1}^{m} y_q\Phi(A'_q) = \Phi(A(x,y)) \in\psd^l\cap\Bcal$ 
and hence $x\in S_B$.
There is no difference in the proof if $\widephi_{AB}$ is positive.


(2):
Since the spectrahedra defined by $A(x,y)$ and $\widea(x,y)$ coincide, their
projections equal and hence $\pi\left(S_{\widea}\right)\subseteq S_B$. 
Let $\widea(x_0,x,y) \in\psd^{k+1}\cap\widema$. Then $x_0\geq 0$.

Case $x_0 > 0$. 
By scaling the linear pencil with $1/x_0$ the positive semidefiniteness is
preserved.
Thus, $1/x_0 \widea(x_0,x,y) = \widea(1,x/x_0,y/x_0)\in\psd^{k+1}\cap\widema$
and $ x/x_0 \in\pisa\subseteq S_B $. 
Scaling $B(x/x_0)$ by $x_0$ yields
$ \widephi(\widea(x_0,x,y)) = x_0 B_0 + \sum_{p=1}^d x_p B_p 
  = x_0 B(x/x_0) \in\psd^l\cap\Bcal $.

Case $x_0 = 0$.
If $(x,x_0) =(0,0)$, the statement is obvious. Let $x\neq 0$.
Fix a point $\bar{x}\in\pisa\neq\emptyset$. Then, for some $\bar{y},y\in\R^m$, 
$ \widea(1,\bar{x}+tx,\bar{y}+ty) 
  = \widea(1,\bar{x},\bar{y}) + \widea(0,tx,ty) \succeq 0 $ 
for all $t>0$, implying $\bar{x}+tx \in\pisa\subseteq S_B$ for all $t>0$.
Thus $x$ lies in the recession cone of $\pisa$ which clearly is contained in
the recession cone of $S_B$. 
Indeed, $\frac{1}{t} B(1,\bar{x})+B(0,x) = \frac{1}{t} B(\bar{x}+tx)\succeq 0$
for all $t>0$. 
By closedness of the cone of positive semidefinite matrices, we get
$B(0,x)\succeq 0$. 
Hence, $\widephi(\widea(x_0,x,y)) =\widephi(\widea(0,x,y)) = B(0,x)\succeq 0$.


(3):
Let $A(x_0,x,y) = x_0 A_0 +\sum_{p=1}^d x_p A_p +\sum_{q=1}^m y_q
  A'_{q}\in \psd^k\cap\Acal$.

Case $x_0 > 0$. 
This case follows by a similar scaling argument as in part (2).

Case $x_0 \leq 0$. 
Since $\pisa\neq\emptyset$, there exists $\bar{x}\in\pisa$ and hence, for some
$\bar{y}\in\R^m$,
\begin{align*}
  A(0,x+|x_0|\bar{x},y+|x_0|\bar{y}) 
  \succeq |x_0| \cdot A(1,\bar{x},\bar{y}) \succeq 0 .
\end{align*}
For $A(0,x+|x_0|\bar{x},y+|x_0|\bar{y})\neq 0$, one has an improving ray of
the spectrahedron $S_A$, in contradiction to boundedness of $S_A$.
For $A(0,x+|x_0|\bar{x},y+|x_0|\bar{y}) = 0$, linear independence of 
$A_0,\ldots,A_{d+m}$ implies $(x+|x_0|\bar{x},y+|x_0|\bar{y}) = (0,0)$. But
then $x_0 A(1,\bar{x},\bar{y}) = A(x_0,x,y)\succeq 0$ together with 
$x_0\leq 0$ and $A(1,\bar{x},\bar{y})\succeq 0$ imply either
$A(1,\bar{x},\bar{y}) = 0$, in contradiction to linear independence, or
$(x_0,x) = (0,0)$. Clearly, in this case, $\Phi_{AB}(0) = 0$. 
\end{proof}

The linear map $\widephi_{AB}$ can be represented by an symmetric 
$(k+1)l\times (k+1)l$ matrix $\widehat{C}$.
By expecting the linear equations defining $\widehat{C}$, it is easy to see
that $\widehat{C}=C_0\oplus C$, where the matrix pair $C_0,\ C$ is from
Theorem~\ref{thm:inclusion-proj}.
Thus Proposition~\ref{prop:sdfp_sos_proj} implies the next corollary to
Theorem~\ref{thm:pispos}.

\begin{cor}
Let $A(x,y)\in\sym^k[x,y]$ and $B(x)\in\sym^l[x]$ be linear pencils with
$A(x,y)$ strictly feasible. Then the following are equivalent.
\begin{enumerate}
  \item
  The map $\widephi_{AB}$ is completely positive, i.e., $\widehat{C}\succeq 0$.
  \item
  The solitary criterion~\eqref{eq:inclusion-proj} is feasible.
  \item
  $B(x)\in\qm^l_0(\pi A)$.
\end{enumerate}
\end{cor}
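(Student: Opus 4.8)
The plan is to reduce the three-way equivalence to two ingredients already available: Proposition~\ref{prop:sdfp_sos_proj}, which equates feasibility of the solitary criterion~\eqref{eq:inclusion-proj} with membership $B(x)\in\qm^l_0(\pi A)$, and the matrix description of the map $\widephi_{AB}$ recorded just before the corollary. Concretely, (2)$\Leftrightarrow$(3) is exactly Proposition~\ref{prop:sdfp_sos_proj}, so the only point requiring argument is the identification of (1) with (2).

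For (1)$\Leftrightarrow$(2) I would first invoke the standard fact that a linear map between spaces of symmetric matrices is completely positive if and only if its representing (Choi-type) matrix is positive semidefinite; here that matrix is the symmetric $(k+1)l\times(k+1)l$ matrix $\widehat{C}$ attached to $\widephi_{AB}$, so complete positivity of $\widephi_{AB}$ means precisely $\widehat{C}\succeq 0$.

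Next I would make the block structure of $\widehat{C}$ explicit. Every element of the domain lattice $\widema$ has the shape $x_0(1\oplus A_0)+\sum_p x_p(0\oplus A_p)+\sum_q y_q(0\oplus A'_q)$, so the extra first coordinate decouples and the linear equations defining $\widehat{C}$ split off a single diagonal block. Inspecting these equations gives $\widehat{C}=C_0\oplus C$, where $C_0\in\sym^l$ is the block carrying the scalar entry of $1\oplus A_0$ and $C=(C_{ij})_{i,j=1}^k\in\sym^{kl}$ encodes the action on the $A$-part. Consequently $\widehat{C}\succeq 0$ is equivalent to the pair of conditions $C_0\succeq 0$ and $C\succeq 0$.

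Finally I would read off the linear constraints that $\widehat{C}$ must satisfy in order to represent $\widephi_{AB}$. Translating the prescriptions $\widephi_{AB}(1\oplus A_0)=B_0$, $\widephi_{AB}(0\oplus A_p)=B_p$, and $\widephi_{AB}(0\oplus A'_q)=0$ through the trace pairing produces exactly the three families of equations in~\eqref{eq:inclusion-proj}, including $\sum_{i,j}(A'_q)_{ij}C_{ij}=0$. Hence $\widehat{C}\succeq 0$ subject to these constraints is the same as feasibility of~\eqref{eq:inclusion-proj}, which is (1)$\Leftrightarrow$(2); combined with Proposition~\ref{prop:sdfp_sos_proj} this closes the loop. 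The main obstacle, such as it is, is bookkeeping: one must verify that the representability constraints for a map defined only on the proper subspace $\widema\subseteq\sym^{k+1}$ (rather than on all of $\sym^{k+1}$) coincide with~\eqref{eq:inclusion-proj}, and that strict feasibility of $A(x,y)$ is what makes $\widephi_{AB}$ well defined so that the correspondence is faithful; once the splitting $\widehat{C}=C_0\oplus C$ is in place, nothing further is needed.
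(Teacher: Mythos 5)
Your argument is essentially the paper's own proof: the paper likewise observes that the linear equations representing $\widephi_{AB}$ let one take $\widehat{C}=C_0\oplus C$ with $C_0,\ C$ exactly the matrices of~\eqref{eq:inclusion-proj} (so $\widehat{C}\succeq 0$, i.e.\ complete positivity, is precisely feasibility of the solitary criterion, with the converse direction obtained by passing to the two diagonal blocks of any PSD representing matrix), and then cites Proposition~\ref{prop:sdfp_sos_proj} for the equivalence with $B(x)\in\qm^l_0(\pi A)$. One small correction to a side remark: well-definedness of $\widephi_{AB}$ is guaranteed by the linear independence of $A_1,\ldots,A_d,A'_1,\ldots,A'_m$ assumed in its construction before Theorem~\ref{thm:pispos}, not by strict feasibility of $A(x,y)$, which is instead the hypothesis required by Proposition~\ref{prop:sdfp_sos_proj}.
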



\bibliography{containment-proj}
\bibliographystyle{plain}


\end{document}